\def\version{Version 13: 12/12/2007}
\newtheorem{thm}{Theorem}[section]
\newtheorem{lem}[thm]{Lemma}
\newtheorem{prop}[thm]{Proposition}
\newtheorem{cor}[thm]{Corollary}
\theoremstyle{definition}
\newtheorem{rem}[thm]{Remark}
\numberwithin{equation}{section}
\def\ie{\emph{i.e.}}
\def\etal{\emph{et al}}
\def\ds{\displaystyle}
\def\:{\colon}
\def\.{\cdot}
\def\o{\circ}
\def\<{\left\langle}
\def\>{\right\rangle}
\def\({\left(}
\def\){\right)}
\def\ph#1{\phantom{#1}}
\def\epsilon{\varepsilon}
\def\phi{\varphi}
\def\subset{\subseteq}
\def\supset{\supseteq}
\def\leq{\leqslant}
\def\geq{\geqslant}
\def\lra{\longrightarrow}
\def\bar#1{\overline{#1}}
\def\ubar#1{\underline{#1}}
\def\hat#1{\widehat{#1}}
\def\tilde#1{\widetilde{#1}}
\def\iso{\cong}
\DeclareMathOperator{\codom}{codom}
\DeclareMathOperator{\dom}{dom}
\DeclareMathOperator{\rank}{rank}
\def\bbB{{\mathbb B}}
\def\C{{\mathbb C}}
\def\D{{\mathbb D}}
\def\F{{\mathbb F}}
\def\Fpi{{\bar\F_p}}
\def\G{{\mathbb G}}
\def\Gm{{\G_{\mathrm m}}}
\def\k{\Bbbk}
\def\Q{{\mathbb Q}}
\def\W{\mathbb W}
\def\Z{{\mathbb Z}}
\def\Times_#1{\ds\mathop{\times}_{#1}}
\def\oTimes_#1{\ds\mathop{\otimes}_{#1}}
\def\Prod_#1{\ds\mathop{\prod_{#1}}}
\def\ideal{\triangleleft}
\def\wt{\mathrm{wt}}
\def\SL{\mathrm{SL}}
\def\Id{\mathrm{Id}}
\def\rmS{\mathrm{S}}
\DeclareMathOperator{\Aut}{Aut}
\def\AUT{{\mathbf{Aut}}}
\DeclareMathOperator{\End}{End}
\DeclareMathOperator{\Gal}{Gal}
\DeclareMathOperator{\Map}{Map}
\def\Mapc{\Map^{\mathrm{c}}}
\DeclareMathOperator{\Obj}{Obj}
\DeclareMathOperator{\Mor}{Mor}
\DeclareMathOperator{\CW}{CW}
\DeclareMathOperator{\B}{B}
\def\degs{\deg_{\mathrm{s}}}
\def\Alg{\mathbf{Alg}}
\def\Algc{\Alg^{\mathrm{c}}}
\def\CommGpSch{\mathbf{CommGpSch}}
\def\Comod{\mathbf{Comod}}
\def\Comodc{\Comod^{\mathrm{c}}}
\def\DivGp{\mathbf{DivGp}}
\def\ELLFGL{\mathbf{EllFGL}}
\def\SELLFGL{\mathbf{SEllFGL}}
\def\ssELLFGL{{}^{\mathrm{ss}}\ELLFGL}
\def\ssSELLFGL{{}^{\mathrm{ss}}\SELLFGL}
\def\ELL{\mathbf{Ell}}
\def\Mod{\mathbf{Mod}}
\def\Modc{\mathbf{Mod}^{\mathrm{c}}}
\DeclareMathOperator{\Ext}{Ext}
\DeclareMathOperator{\Fr}{Fr}
\DeclareMathOperator{\Hom}{Hom}
\DeclareMathOperator{\IHom}{InvtHom}
\DeclareMathOperator{\Spec}{Spec}
\def\Specf{\Spec^{\mathrm{f}}}
\def\cTensor#1{{\ds\widehat{\mathop{\otimes}_{#1}}}}
\def\Ell{{E\ell\ell}}
\def\ssEll{{{}^{\mathrm{ss}}\Ell}}
\def\ssGamma{{{{}^{\mathrm{ss}}\Gamma}}}
\def\BP{BP}
\def\EO{EO}
\def\MU{MU}
\def\TMF{TMF}
\def\TM{\mathcal T}
\def\VM{\mathcal V}
\def\rmF{\mathrm{F}}
\def\rmV{\mathrm{V}}
\def\invlim#1{\ds\mathop{\varprojlim}_{#1}}
\def\Hasse{\mathrm{Hasse}}
\def\SGp{\mathbb S_2}
\def\tSGp{\widetilde{\mathbb S}_2}
\def\Eta{\mathrm{H}}
\def\modp#1{\bmod{(#1)}}
\def\catC{{\boldsymbol{\mathcal C}}}
\newcommand{\legendre}[2]{\genfrac(){1pt}0{#1}{#2}}
\def\sphat{^{\hat{\ph{B}}}}
\def\Isog{\mathbf{Isog}}
\def\Isogu{\Isog^\times}
\def\tIsog{\tilde\Isog}
\def\tIsogu{\tilde\Isogu}
\def\SepIsog{\mathbf{SepIsog}}
\def\SepIsogu{\SepIsog^\times}
\def\oSepIsog{{\mathbf{SepIsog}}}
\def\oSepIsogu{\oSepIsog^\times}
\def\tSepIsogu{\tilde\SepIsog^\times}
\def\toSepIsogu{\tilde\oSepIsog^\times}
\def\Isogss{\Isog_{\mathrm{ss}}}
\def\Isogssu{\Isogss^\times}
\def\SepIsogss{\SepIsog_{\mathrm{ss}}}
\def\SepIsogssu{\SepIsogss^\times}
\def\oSepIsogss{\oSepIsog_{\mathrm{ss}}}
\def\oSepIsogssu{\oSepIsogss^\times}
\def\tIsogss{\tilde{\Isogss}}
\def\tIsogssu{\tilde{\Isogssu}}
\def\tSepIsogssu{\tilde{\SepIsogssu}}
\def\toSepIsogssu{\tilde{\oSepIsogssu}}
\def\mubf{\boldsymbol{\mu}}
\def\UB#1{\underline{#1}}
\def\Eis{\mathrm{Ei}}
\DeclareMathOperator{\ind}{ind}
\def\Gpd{{\mathcal G}}
\def\GpdN{{\mathcal N}}
\DeclareMathOperator{\Stab}{Stab}
\def\sdp{\rtimes}
\def\Hc{\mathrm{H}_c}
\def\AbsELL{\mathbf{AbsEll}}
\def\d{\mathrm{d}}
\DeclareMathOperator{\Sect}{Sect}
\title[Isogenies of supersingular elliptic curves and elliptic cohomology]
{Isogenies of supersingular elliptic curves over finite fields and operations
in elliptic cohomology}
\author{Andrew Baker}
\address{Department of Mathematics, University of Glasgow,
Glasgow G12~8QW, Scotland.}
\email{a.baker@maths.gla.ac.uk}
\urladdr{http://www.maths.gla.ac.uk/$\sim$ajb}
\thanks{
I would like to thank K.~Buzzard, I.~Connell, J.~Cremona, R.~Odoni, N.~Strickland,
G.~Robert, J.~Tate and V. Gorbounov for their contributions to my understanding of
supersingular elliptic curves over finite fields and also the referee of an earlier
version. \\
{\bf Glasgow University Mathematics Department preprint no. 98/39}
\hfill [\version]}
\subjclass[2000]{primary 55N34, 55N20, 55N22, 55S05; secondary 14H52, 14L05}
\keywords{elliptic cohomology, supersingular elliptic curve, isogeny}
\begin{document}
\begin{abstract} 
We investigate stable operations in supersingular elliptic cohomology
using isogenies of supersingular elliptic curves over finite fields.
Our main results provide a framework in which we give a conceptually
simple proof of an elliptic cohomology version of the Morava change of
rings theorem and also gives models for explicit stable operations in
terms of isogenies and morphisms in certain enlarged isogeny categories.
We relate our work to that of G.~Robert on the Hecke algebra structure
of the ring of supersingular modular forms. 
\end{abstract}

\maketitle

\section*{Introduction}

In previous work we investigated \emph{supersingular} reductions of elliptic
cohomology~\cite{homell}, stable operations and cooperations in elliptic
cohomology~\cite{phecke,heckop,ellellp1,haell} and in~\cite{ellext,ellcalc}
gave some applications to the Adams spectral sequence based on elliptic
(co)homology. In this paper we investigate stable operations in supersingular
elliptic cohomology using isogenies of supersingular elliptic curves over finite
fields; this is similar in spirit to our earlier work~\cite{ellellp1} on isogenies
of elliptic curves over the complex numbers although our present account is largely
self contained. Indeed, the promised Part~II of~\cite{ellellp1} is essentially
subsumed into the present work together with~\cite{haell,ellext,ellcalc}. A major
inspiration for this work lies in the paper of Robert~\cite{Robert}, which also
led to the related work of~\cite{modforms}; we reformulate Robert's results on the
Hecke algebra structure of the ring of supersingular modular forms in the language
of the present paper.

Throughout, $p$ will be a prime which we will usually assume to be greater than~$3$,
although much of the algebraic theory works as well for the cases $p=2,3$ provided
appropriate adjustments are made. The precise implications for elliptic cohomology
at the primes~$2$ and~$3$ are considerably more delicate and related to work of
Hopkins and Mahowald on the ring of topological modular forms.

A very general discussion of useful background material from algebraic geometry 
can be found in~\cite{Strickland}.

\subsection*{Historical note} 
A version of this paper originally appeared around 1998, and grew out of a long 
period of work on elliptic cohomology in the 1980s sense of a Landweber exact 
cohomology theory based on level~$1$ modular forms with~$6$ inverted. As part 
of this project the author learned a lot about modular forms and their theory and 
tried to apply this to stable homotopy theory. The theory described here was an 
attempt at building a picture of the operations in supersingular elliptic cohomology 
based on work of Tate and others. We did not make use of the modern theory of 
structured ring spectra, so this lacks the spectrum-level rigidity that is now seen 
as crucial in the construction of the topological modular forms spectrum. Nevertheless, 
recent work of Behrens and Lawson~\cite{MB-TL:1,MB-TL:2} has touched on similar ideas 
but in a more sophisticated fashion. We make this paper available on \texttt{arXiv} 
in part to provide a historical record of an earlier attempt at using some of these 
ideas.

\section{Elliptic curves over finite fields}
\label{sec:EllCurFinFlds}

General references for this section are~\cite{Husemoller,Sil},
while~\cite{Katz:ModFunc,Katz-Mazur} provide more abstract formulations. We will
be interested in elliptic curves $\mathcal E$ defined over a subfield $\k\subset\Fpi$,
the algebraic closure of $\F_p$, indeed, we will take $\k=\Fpi$ unless otherwise
specified. We will impose further structure by prescribing a sort of `orientation'
for a curve as part of the data. We will usually assume that $p>3$, although most
of the algebraic details have analogues for the primes~$2$ and~$3$.

We adopt the viewpoint of \cite{Katz:ModFunc,Katz-Mazur}, defining an \emph{oriented
elliptic curve} to be a connected $1$-dimensional abelian group scheme $\mathcal E$
over $\k$ equipped with a nowhere vanishing invariant holomorphic $1$-form
$\omega\in\Omega^1(\mathcal E)$. We will refer to $\mathcal E$ as the underlying
elliptic curve of $\UB{\mathcal E}=(\mathcal E,\omega)$.

A morphism of abelian varieties $\phi\:\mathcal E_1\lra\mathcal E_2$ for which
$\phi^*\omega_2\neq0$ corresponds to a morphism
$\phi\:\UB{\mathcal E_1}\lra\UB{\mathcal E_2}$. Since $\Omega^1(\mathcal E_1)$
is $1$-dimensional over $\Fpi$, there is a unique $\lambda_\phi\in\k^\times$ for
which $\phi^*\omega_2=\lambda_\phi\omega_1$. So such a morphism $\phi$ is a pair
of the form $(\phi,\lambda_\phi)$; if $\lambda_\phi=1$ we will say that $\phi$ is
\emph{strict}. We will denote the category of all such abstract oriented elliptic
curves over $\k$ by $\AbsELL_\k$, and $\AbsELL$ when $\k=\Fpi$.

If $p>3$, by the theory of normal forms to be found in
\cite{Husemoller,Sil,Tate:Arith}, for the oriented elliptic
curve $\UB{\mathcal E}$ there are (non-unique) meromorphic
functions $X,Y$ with poles of orders $2$ and $3$ at
$\mathrm O=[0,1,0]$ and a non-vanishing $1$-form $\d X/Y$
for which
\begin{equation}\label{eqn:Wcub-general}
Y^2=4X^3-aX-b\quad\text{for some $a,b\in\k$}.
\end{equation}
The projectivization $\mathcal E_{\mathrm{W}}$ of the
unique non-singular cubic
\begin{equation}
\label{eqn:WCubic-affine}
y^2=4x^3-ax-b
\end{equation}
is a non-singular Weierstara\ss{} cubic and there is an
isomorphism of elliptic curves
\[
(\theta,\lambda_\theta)\:
(\mathcal E,\d X/Y)\lra(\mathcal E_{\mathrm{W}},\d x/y).
\]
By twisting, we can ensure that $\lambda_\theta=1$, \ie,
$\theta$ is strict. For each $\UB{\mathcal E}$ we choose
such a strict isomorphism $\theta_{\UB{\mathcal E}}$.
Conversely, a Weierstra\ss{} cubic yields an abstract elliptic
curve with the non-vanishing invariant $1$-form $\d x/y$. Let
$\ELL$ denote the full subcategory of $\AbsELL$ consisting of
such Weierstra\ss{} cubics $\mathcal E$ equipped with their
standard $1$-forms, $(\mathcal E,\d x/y)$.
\begin{thm}\label{thm:AbsELL-ELL}
The embedding $\ELL\lra\AbsELL$ is an equivalence of categories.
\end{thm}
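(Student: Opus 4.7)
The plan is to verify the two conditions characterizing an equivalence: full faithfulness and essential surjectivity. Full faithfulness comes essentially for free, since $\ELL$ was introduced as a \emph{full} subcategory of $\AbsELL$; the inclusion therefore induces the identity on each $\Hom$-set, and no further work is needed on morphism sets.

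The substantive content is essential surjectivity: every object $\UB{\mathcal E}=(\mathcal E,\omega)$ of $\AbsELL$ is isomorphic in $\AbsELL$ to some Weierstra\ss{} cubic $(\mathcal E_{\mathrm W},\d x/y)$ in $\ELL$. I would simply unwind the discussion given immediately before the theorem. Starting from $\UB{\mathcal E}$, the normal form theory cited from \cite{Husemoller,Sil,Tate:Arith} (valid because $p>3$) produces meromorphic functions $X,Y$ on $\mathcal E$ with poles of orders $2$ and $3$ at the identity and a cubic relation $Y^2=4X^3-aX-b$ with $a,b\in\k$, together with a non-vanishing invariant $1$-form $\d X/Y$. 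Projectivizing the affine cubic \eqref{eqn:WCubic-affine} gives an object $\mathcal E_{\mathrm W}\in\ELL$ and an isomorphism of underlying elliptic curves $\theta\:\mathcal E\to\mathcal E_{\mathrm W}$ satisfying $\theta^*(\d x/y)=\lambda_\theta\,\d X/Y$ for a unique $\lambda_\theta\in\k^\times$.

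The one place where a small argument is required is strictness: we need $\lambda_\theta=1$ so that $\theta$ is a morphism in $\AbsELL$ taking $(\mathcal E,\omega)$ to $(\mathcal E_{\mathrm W},\d x/y)$ on the nose. This is the step I expect to be slightly subtle; it is handled by twisting the Weierstra\ss{} equation. Given $\lambda\in\k^\times$, the substitution $(x,y)\mapsto(\lambda^2 x,\lambda^3 y)$ sends \eqref{eqn:WCubic-affine} with coefficients $(a,b)$ to the isomorphic Weierstra\ss{} cubic with coefficients $(\lambda^4 a,\lambda^6 b)$ and multiplies $\d x/y$ by $\lambda^{-1}$. Since $\k=\Fpi$ is algebraically closed we may solve $\lambda=\lambda_\theta$ and absorb the scalar, producing a \emph{strict} isomorphism $\theta_{\UB{\mathcal E}}\:\UB{\mathcal E}\to(\mathcal E_{\mathrm W}',\d x/y)$ with $\mathcal E_{\mathrm W}'\in\ELL$, as promised in the paragraph preceding the theorem.

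Combining full faithfulness with essential surjectivity then yields the desired equivalence of categories. The only real obstacle is the twisting step; everything else is a direct appeal to the Weierstra\ss{} normal form theory and the definition of $\ELL$ as a full subcategory.
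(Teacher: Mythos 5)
Your argument is correct and is essentially the same as the paper's: the paper does not write out a separate proof for this theorem, but the discussion preceding it (normal form theory producing a Weierstra\ss{} cubic, then twisting by $\lambda_\theta$ to make the isomorphism strict, with fullness coming from $\ELL$ being a full subcategory) is exactly the argument you give. One small remark: your appeal to algebraic closure to ``solve $\lambda=\lambda_\theta$'' is unnecessary --- $\lambda_\theta$ already lies in $\k^\times$ and the $\lambda$-twist requires no root extraction (unlike the $u$-twist in Theorem~\ref{thm:Standard WeierstarssForm}, which does need a square root).
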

Because of this, the phrase (oriented) elliptic curve will now
refer to a Weierstra\ss{} cubic, since we can replace a general
elliptic curve by a Weierstra\ss{} cubic up to isomorphism.


A modular form $f$ of weight $n$ defined over $\k$ is a
rule which assigns to each oriented elliptic curve
$\UB{\mathcal E}=(\mathcal E,\omega)$ over $\k$ a section
$f(\UB{\mathcal E})\omega^{\otimes n}$ of the bundle
$\Omega^1(\mathcal E)^{\otimes n}$, such that for each
isomorphism $\phi\:\mathcal E_1\lra\mathcal E_2$,
\[
\phi^*(f(\UB{\mathcal E_2})\omega_2^{\otimes n})
=f(\UB{\mathcal E_1})\omega_1^{\otimes n}.
\]
In particular, if $\phi^*\omega_2=\lambda\omega_1$, then
\[
f(\UB{\mathcal E_2})=\lambda^{-n}f(\UB{\mathcal E_1}),
\]
which is formally equivalent to $f$ being a modular form
of weight $n$ in the classical sense of \cite{Sil}.

If we rewrite Equation \eqref{eqn:WCubic-affine} in a form
consistent with the notation of~\cite[III \S1]{Sil},
\begin{equation}
\label{eqn:WeierCub}
\mathcal E\:
y^2=
4x^3-\frac{c_4(\UB{\mathcal E})}{12}x+\frac{c_6(\UB{\mathcal E})}{216},
\end{equation}
the functions $c_4,c_6$ are examples of such modular forms of weights~$4$
and~$6$ respectively. The non-vanishing discriminant function $\Delta$
defined by
\[
\Delta(\UB{\mathcal E})=
\frac{(c_4(\UB{\mathcal E})^3-c_6(\UB{\mathcal E})^2)}{1728},
\]
is also a modular form of weight~$12$. In fact the curve $\mathcal E$ is
defined over the finite subfield
$\F_p(c_4(\UB{\mathcal E}),c_6(\UB{\mathcal E}))\subset\Fpi$
and hence over any finite subfield containing it. The \emph{$j$-invariant}
of $\mathcal E$ is
\[
j(\UB{\mathcal E})=
\frac{c_4(\UB{\mathcal E})^3}{\Delta(\UB{\mathcal E})}
\in\F_p(c_4(\UB{\mathcal E}),c_6(\UB{\mathcal E})).
\]
$j$ is a modular form of weight $0$ and only depends on
$\mathcal E$, so we will write $j(\mathcal E)$.

The next result is well known \cite{Husemoller,Sil}; note
that further information is required to determine the
isomorphism class over a finite field containing
$\F_p(c_4(\UB{\mathcal E}),c_6(\UB{\mathcal E}))$.
\begin{thm}
\label{thm:j-invtCompIsoInvt}
The invariant $j(\mathcal E)$ is a complete isomorphism
invariant of the curve $\mathcal E$ over the algebraically
closed field $\Fpi$.
\end{thm}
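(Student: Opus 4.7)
The plan is to verify both directions. For the forward direction, any isomorphism $\phi\:\mathcal E_1\lra\mathcal E_2$ satisfies $\phi^*\omega_2=\lambda\omega_1$ for some $\lambda\in\Fpi^\times$, so the modular-form transformation law gives $c_4(\UB{\mathcal E_1})=\lambda^{-4}c_4(\UB{\mathcal E_2})$ and $\Delta(\UB{\mathcal E_1})=\lambda^{-12}\Delta(\UB{\mathcal E_2})$; hence $j$, being of weight $0$, agrees on isomorphic curves.

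For the converse, I would use Theorem~\ref{thm:AbsELL-ELL} to realize both curves in the normal form \eqref{eqn:WeierCub}. A standard computation with pole orders at $\mathrm O$ shows that any isomorphism between two such cubics preserving the origin has the shape $(x,y)\mapsto(u^2x+r,\,u^3y+u^2sx+t)$; the requirement that the target equation again be of the form $y^2=4x^3-ax-b$ (with no $xy$, $y$ or $x^2$ terms) forces $r=s=t=0$. So one is left with $(x,y)\mapsto(u^2x,u^3y)$, which sends $(a,b)$ to $(u^4a,u^6b)$ and therefore $(c_4,c_6)$ to $(u^4c_4,u^6c_6)$. Constructing an isomorphism thus reduces to finding $u\in\Fpi^\times$ with $c_4(\UB{\mathcal E_2})=u^4c_4(\UB{\mathcal E_1})$ and $c_6(\UB{\mathcal E_2})=u^6c_6(\UB{\mathcal E_1})$.

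I would then split into three cases according to $j$. If $j(\mathcal E_i)=0$, both $c_4$ vanish, both cubics take the form $y^2=4x^3-b_i$ with $b_i\neq0$, and one solves $u^6b_1=b_2$ using algebraic closure. If $j(\mathcal E_i)=1728$, both $c_6$ vanish and one solves $u^4a_1=a_2$. In the generic case $j\neq 0,1728$, all of $c_4(\UB{\mathcal E_i}),c_6(\UB{\mathcal E_i})$ are nonzero; pick any $u$ with $u^4=c_4(\UB{\mathcal E_2})/c_4(\UB{\mathcal E_1})$. Rearranging $j(\mathcal E_1)=j(\mathcal E_2)$ gives
\[
c_4(\UB{\mathcal E_1})^3c_6(\UB{\mathcal E_2})^2
=c_4(\UB{\mathcal E_2})^3c_6(\UB{\mathcal E_1})^2,
\]
so $u^{12}c_6(\UB{\mathcal E_1})^2=c_6(\UB{\mathcal E_2})^2$ and therefore $u^6c_6(\UB{\mathcal E_1})=\pm c_6(\UB{\mathcal E_2})$.

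The one remaining subtlety, which I expect to be the only nontrivial point, is reconciling the sign in the generic case: if the minus sign occurs, replace $u$ by $\zeta u$ where $\zeta\in\Fpi$ is a primitive fourth root of unity. Since $p>3$ such a $\zeta$ exists in the algebraically closed field $\Fpi$, and $(\zeta u)^4=u^4$ while $(\zeta u)^6=-u^6$, which flips the sign without disturbing the fourth-power condition. This completes the construction in all cases, giving the required isomorphism over $\Fpi$.
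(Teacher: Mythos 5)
Your argument is correct and is essentially the standard proof found in the references (Silverman, Prop.\ III.1.4(b), and Husemoller) that the paper cites. The paper itself offers no proof---it states only that the result is well known and refers to those sources---so your proposal, reducing to short Weierstra\ss{} form, solving for the twisting parameter $u$ in the three $j$-cases, and using a primitive fourth root of unity to fix the sign in the generic case, supplies precisely the argument the paper leaves implicit.
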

Another important invariant is the \emph{Hasse invariant}
$\Hasse(\UB{\mathcal E})$ which is a homogeneous polynomial
of weight $p-1$ in $c_4(\UB{\mathcal E}),c_6(\UB{\mathcal E})$
which have given weights 4 and 6 respectively. The oriented
elliptic curve $\UB{\mathcal E}=(\mathcal E,\omega)$ is said
to be \emph{supersingular} if $\Hasse(\UB{\mathcal E})=0$;
again this notion only depends on $\mathcal E$ and not the
1-form $\omega$.

Given $\mathcal E$ defined over $\k\subset\Fpi$, we can consider
$\mathcal E(\k')$, the set of points defined over an extension
field $\k'\supset\k$. We usually regard $\mathcal E(\Fpi)$ as
`the' set of points of $\mathcal E$; thus whenever
$\k\subset\k'\subset\bar\F_p$, we have
\[
\mathcal E(\k)\subset\mathcal E(\k')
\subset\mathcal E(\bar\F_p).
\]
We will also use the notation
\[
\mathcal E[n]=
\ker[n]_{\mathcal E}\:\mathcal E(\Fpi)\lra\mathcal E(\Fpi),
\]
where $[n]_{\mathcal E}\:\mathcal E\lra\mathcal E$ is the multiplication
by $n$ morphism. Actually, this notation is potentially misleading when
$p\mid n$ and should be restricted to the case $p\nmid n$. In
Section \ref{sec:TateMods}, we will also discuss the general case.

For the elliptic curve $\UB{\mathcal E}=(\mathcal E,\omega)$, if meromorphic
functions $X,Y$ are chosen as in Equation \eqref{eqn:Wcub-general}, there
is a local parameter at $\mathrm O$, namely $-2X/Y$, vanishing to order 1
at $\mathrm O$. In terms of the corresponding Weierstra\ss{} form of
Equation \eqref{eqn:WeierCub}, this is the local parameter at
$\mathrm O=[0,1,0]$ given by $t_{\UB{\mathcal E}}=-2x/y$. When referring
to the elliptic curve $\UB{\mathcal E}$, we will often use the notation
\[
(\mathcal E,c_4(\UB{\mathcal E}),c_6(\UB{\mathcal E}),t_{\UB{\mathcal E}})
\]
to indicate that it has Weierstra\ss{} form as in Equation \eqref{eqn:WeierCub}
and local parameter $t_{\UB{\mathcal E}}$. We refer to this data as a
Weierstra\ss{} realization of the elliptic curve
$\UB{\mathcal E}=(\mathcal E,\omega)$.

The local parameter $t_{\UB{\mathcal E}}$ has an associated formal group
law $F_{\UB{\mathcal E}}$ induced from the group structure map
$\mu\:\mathcal E\times\mathcal E\lra\mathcal E$
by taking its local expansion
\[
\mu^*t_{\UB{\mathcal E}}=
F_{\UB{\mathcal E}}(t'_{\UB{\mathcal E}},t''_{\UB{\mathcal E}})
\]
where $t'_{\UB{\mathcal E}},t''_{\UB{\mathcal E}}$ are the local functions
on $\mathcal E\times\mathcal E$ induced from $t_{\UB{\mathcal E}}$ by
projection onto the two factors. Thus we have a formal group law
$F_{\UB{\mathcal E}}(Z',Z'')\in\k[[Z',Z'']]$ if $\mathcal E$ is defined over
$\k$. The coefficients of $F_{\UB{\mathcal E}}$ lie in the $\F_p$-algebra
generated by the coefficients $c_4(\UB{\mathcal E}),c_6(\UB{\mathcal E})$ and
the coefficient of ${Z'}^r{Z''}^s$ is a linear combination of the monomials
$c_4(\UB{\mathcal E})^ic_6(\UB{\mathcal E})^j$ with $4i+6j+1=r+s$ and whose
coefficients are independent of $\UB{\mathcal E}$); in particular, only odd
degree terms in $Z',Z''$ occur.

Given two elliptic curves $\UB{\mathcal E}$ and $\UB{\mathcal E'}$ together
with an isomorphism $\alpha\:\mathcal E\lra\mathcal E'$ of abelian varieties,
there is a new formal group law $F_{\UB{\mathcal E}}^\alpha$ defined by
\[
F_{\UB{\mathcal E}}^\alpha(t_{\UB{\mathcal E}}',
t''_{\UB{\mathcal E}})
=\alpha^{-1}
F_{\UB{\mathcal E'}}(\alpha^*t_{\UB{\mathcal E'}}',
\alpha^*t''_{\UB{\mathcal E'}}).
\]
\begin{lem}
\label{lem:InceFGL}
Let $\UB{\mathcal E}=(\mathcal E,\omega)$ be an oriented elliptic curve and
$\alpha\:\mathcal E\lra\mathcal E$ an automorphism of abelian varieties, then
$F_{\UB{\mathcal E}}^\alpha=F_{\UB{\mathcal E}}$.
\end{lem}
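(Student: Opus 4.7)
The plan is to unwind the definition of the twisted formal group law $F^\alpha_{\UB{\mathcal E}}$ and observe that the required identity is nothing more than an expression of the fact that the group homomorphism $\alpha$ commutes with the addition map $\mu$ on $\mathcal E$. Write $t := t_{\UB{\mathcal E}}$ and $F := F_{\UB{\mathcal E}}$, so that $F$ is characterized by $\mu^*t = F(t',t'')$ in the complete local ring at $(\mathrm O,\mathrm O)\in\mathcal E\times\mathcal E$. Put $\psi(T) := \alpha^*t \in \k[[t]]$. Since $\alpha$ is an abelian variety automorphism it fixes $\mathrm O$ and induces an isomorphism on the tangent space there, so $\psi(0)=0$ and $\psi'(0)\in\k^\times$; hence $\psi$ has a compositional inverse $\psi^{-1}\in \k[[T]]$, which is what the symbol $\alpha^{-1}$ means in the formula defining $F^\alpha_{\UB{\mathcal E}}$.

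The central step is to show that $\psi$ is an endomorphism of $F$ as a formal group law, that is,
\[
\psi\bigl(F(t',t'')\bigr)=F\bigl(\psi(t'),\psi(t'')\bigr).
\]
This follows from the group-theoretic identity $\alpha\circ\mu = \mu\circ(\alpha\times\alpha)$. Pulling back the function $t$ along both sides gives the equality $\mu^*(\alpha^*t) = (\alpha\times\alpha)^*(\mu^*t)$ in the completed local ring at $(\mathrm O,\mathrm O)$. The left-hand side expands as $\mu^*\psi(t) = \psi(\mu^*t) = \psi(F(t',t''))$, while the right-hand side becomes $(\alpha\times\alpha)^*F(t',t'') = F(\psi(t'),\psi(t''))$, since $(\alpha\times\alpha)^*$ acts by applying $\alpha^*$ in each factor.

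With this identity in hand, the lemma is immediate from the definition:
\[
F^\alpha_{\UB{\mathcal E}}(t',t'')
=\psi^{-1}\bigl(F(\psi(t'),\psi(t''))\bigr)
=\psi^{-1}\bigl(\psi(F(t',t''))\bigr)
=F(t',t'')=F_{\UB{\mathcal E}}(t',t'').
\]

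There is no genuine obstacle here; the only potential pitfall is bookkeeping — keeping straight which factor each $\alpha^*$ acts on, and correctly interpreting the symbol $\alpha^{-1}$ in the definition of the twisted formal group law as compositional inversion of the power series $\psi$. Once the notation is properly set up, the argument reduces to pure naturality of the formal-group-law construction with respect to group homomorphisms, applied to the automorphism $\alpha$.
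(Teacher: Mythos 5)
Your proof is correct and takes a genuinely different, more conceptual route than the paper does. The paper proves the lemma by appealing to the classification of absolute automorphism groups of elliptic curves over $\Fpi$ (cyclic of order $6$, $4$, or $2$ according as $j(\mathcal E)\equiv 0$, $j(\mathcal E)\equiv 1728$, or neither), then in each case observes that the Weierstra\ss{} normal form forces the coefficients $a_{i,j}$ of $F_{\UB{\mathcal E}}$ to vanish unless $i+j\equiv 1$ modulo the order of the automorphism group, while the nontrivial automorphism scales the local parameter $t_{\UB{\mathcal E}}$ by the corresponding root of unity; the identity $F^\alpha=F$ then follows by a direct calculation in each case. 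You instead exploit the single structural fact that because $\alpha$ is a homomorphism of group schemes, its local expansion $\psi=\alpha^*t$ is an endomorphism of the formal group law — naturality of the construction of $F_{\UB{\mathcal E}}$ from $\mu$ — and then $\psi^{-1}\bigl(F(\psi(X),\psi(Y))\bigr)=\psi^{-1}\bigl(\psi(F(X,Y))\bigr)=F(X,Y)$ follows formally. Your argument needs no classification result at all, is manifestly independent of the characteristic and of the particular Weierstra\ss{} model, and would apply unchanged to any one-dimensional connected commutative group scheme; what the paper's explicit computation buys is visibility of the $i+j\equiv 1$ congruence constraints on the formal group law coefficients, which are used elsewhere in the discussion of normal forms.
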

\begin{proof}
The possible absolute automorphism groups are known from~\cite{Husemoller,Sil}
to be given by the following list:
\begin{itemize}
\item
$\Z/6$ if $j(\mathcal E)\equiv0\modp{p}$;
\item
$\Z/4$ if $j(\mathcal E)\equiv1728\modp{p}$;
\item
$\Z/2$ otherwise.
\end{itemize}
In each case, provided that $\F_{p^2}\subset\k$,
$\Aut_{\k}\mathcal E=\Aut\mathcal E$, the absolute automorphism group. The
assertion follows by considering these possibilities in turn; for completeness
we describe them in detail.

When $j(\mathcal E)\equiv0$, the Weierstra\ss{} form is
\[
y^2=x^3+\frac{c_6(\UB{\mathcal E})}{216}
\]
and
\[
F_{\UB{\mathcal E}}(X,Y)=
\sum_{i+j\equiv1\modp{6}}a_{i,j}X^iY^j.
\]
An automorphism of order $6$ has the effect
\[
(x,y)\longmapsto(\zeta_6^2 x,\zeta_6^3 y);
\quad
t_{\UB{\mathcal E}}\longmapsto\zeta_6^{-1}t_{\UB{\mathcal E}},
\]
where $\zeta_6$ is a chosen primitive $6$-th root of unity
in $\Fpi$.

When $j(\mathcal E)\equiv1728$, the Weierstra\ss{} form is
\[
y^2=x^3-\frac{c_4(\UB{\mathcal E})}{12}x,
\]
hence
\[
F_{\UB{\mathcal E}}(X,Y)=
\sum_{i+j\equiv1\modp{4}}a_{i,j}X^iY^j.
\]
An automorphism of order 4 has the effect
\[
(x,y)\longmapsto(\zeta_4^2 x,\zeta_4^3y);
\quad
t_{\UB{\mathcal E}}\longmapsto\zeta_4^{-1}t_{\UB{\mathcal E}},
\]
where $\zeta_4$ is a chosen primitive 4th root of unity in
$\Fpi$.

Finally, in the last case, an automorphism of order~$2$ has
the effect
\[
(x,y)\longmapsto(x,-y);
\quad
t_{\UB{\mathcal E}}\longmapsto-t_{\UB{\mathcal E}}.
\qedhere
\]
\end{proof}

Given a Weierstra\ss{} realization $\mathcal E$ of
$\UB{\mathcal E}$, defined over $\k$, for $u\in\k$,
the curve
\[
\mathcal E^u\:y^2=
4x^3-\frac{u^2c_4(\UB{\mathcal E})}{12}x
+\frac{u^3c_6(\UB{\mathcal E})}{216}
\]
is the \emph{$u$-twist} of $\mathcal E$. For $v\in\k$
with $v^2=u$, there is a \emph{twisting isomorphism}
$\theta_v\:\mathcal E\lra\mathcal E_0$ which is the
completion of the affine map
\[
\phi_v\:(x,y)\longmapsto(v^2x,v^3y).
\]
The effect of this on 1-forms is given by
\[
\theta_v^*\(\frac{dx}{y}\)=v^{-1}\omega.
\]
\begin{thm}
\label{thm:Standard WeierstarssForm}
For each oriented elliptic curve
$\UB{\mathcal E}=(\mathcal E,\omega)$ defined over $\k$,
there is a twisting isomorphism
$\UB{\mathcal E}\lra\UB{\mathcal E_0}$, defined over $\k$
or a quadratic extension $\k'$ of $\k$, where
$\UB{\mathcal E_0}=(\mathcal E_0,dx/y)$ is a Weierstra\ss{}
elliptic curve of one of the following types:
\begin{itemize}
\item
If $j(\mathcal E)\equiv0\modp{p}$,
\[
\mathcal E_0\:y^2=4x^3-4;
\]
\item
if $j(\mathcal E)\equiv1728\modp{p}$,
\[
\mathcal E_0\:y^2=4x^3-4x;
\]
\item
if $j(\mathcal E)\not\equiv0,1728\modp{p}$,
\[
\mathcal E_0\:y^2=
4x^3-\frac{27j(\mathcal E)}{j(\mathcal E)-1728}x
-\frac{27j(\mathcal E)}{j(\mathcal E)-1728}.
\]
\end{itemize}
\end{thm}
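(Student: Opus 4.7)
The plan is to apply the twisting construction given just above the theorem, case-by-case on the $j$-invariant. By Theorem~\ref{thm:AbsELL-ELL} we may assume that $\mathcal E$ is already a Weierstra\ss{} cubic over $\k$ with coefficients $c_4=c_4(\UB{\mathcal E})$ and $c_6=c_6(\UB{\mathcal E})$, and we seek $v$ (setting $u=v^2$) such that the $u$-twist $\mathcal E^u$ coincides with the prescribed $\mathcal E_0$. Then $\theta_v\colon\mathcal E\to\mathcal E^u=\mathcal E_0$, together with the identity $\theta_v^*(dx/y)=v^{-1}\omega$, supplies the required morphism $(\theta_v,v^{-1})\colon\UB{\mathcal E}\to\UB{\mathcal E_0}$ of oriented elliptic curves.

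Matching the coefficients of $\mathcal E^u$ against those of $\mathcal E_0$ yields the two scalar equations $v^4c_4=c_4(\UB{\mathcal E_0})$ and $v^6c_6=c_6(\UB{\mathcal E_0})$. In the generic case $j(\mathcal E)\not\equiv0,1728\modp{p}$ both $c_4,c_6$ are nonzero, and the identity $c_4^3/c_6^2=j(\mathcal E)/(j(\mathcal E)-1728)$, which follows from $j(\mathcal E)=c_4^3/\Delta$ combined with $1728\Delta=c_4^3-c_6^2$, makes the two equations mutually consistent; dividing one by the other forces $v^2=-18c_4/c_6$, so $v$ lies in at worst a quadratic extension of $\k$. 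In the case $j\equiv1728\modp{p}$ one has $c_6=0$ and only $v^4c_4=48$ must be solved, while in the case $j\equiv0\modp{p}$ one has $c_4=0$ and only $v^6c_6=-864$ must be solved.

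The step I expect to be the real obstacle is keeping the field-of-definition claim precise. In the generic case the argument genuinely stops at a quadratic extension of $\k$. In the special cases $j\equiv 0$ or $j\equiv 1728$, a sixth or fourth root is required and the minimal field of definition of $v$ could a priori be larger than a quadratic extension of $\k$; however, the required roots always exist in $\Fpi$, which is the setting of interest throughout the paper, so the existence part of the statement holds there without further comment. Once $v$ has been chosen, the verification that $(\theta_v,v^{-1})$ is an isomorphism of oriented elliptic curves reduces to the elementary substitution $(x,y)\mapsto(v^2x,v^3y)$ already encoded in the twist formulas stated immediately before the theorem.
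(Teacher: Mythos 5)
Your proof is correct and takes essentially the same route as the paper's: normalize $\UB{\mathcal E}$ to a Weierstra\ss{} cubic, exhibit it as a twist of the appropriate normal form $\mathcal E_0$ by matching $c_4$ and $c_6$, and then produce the twisting isomorphism $\theta_v$ with $v^2=u$. Your coefficient check in the generic case (consistency via $j/(j-1728)=c_4^3/c_6^2$, forcing $v^2=-18c_4/c_6\in\k$) is accurate; the paper simply cites Husem\"oller for the three normal forms and asserts without computation that $\mathcal E=\mathcal E_0^u$ for some $u\in\k$.

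The field-of-definition concern you raise in the cases $j\equiv0,1728\pmod p$ is a genuine subtlety that the paper's own proof also glosses over. When $c_4=0$ (resp.\ $c_6=0$), only $u^3$ (resp.\ $u^2$) is constrained to lie in $\k$, so $u$ itself need only lie in a quadratic extension, and $v$ a priori only in a degree $4$ or $6$ extension of $\k$; the paper's claim that $u\in\k$ is not automatic. Over $\Fpi$ — the setting almost exclusively used downstream — the roots exist and the issue evaporates, as you say. So the proposal is sound, with the extra merit of flagging an imprecision in the "quadratic extension" bound that the paper leaves tacit.
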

\begin{proof}
The above forms are taken from Husemoller \cite{Husemoller}. Given
a Weierstra\ss{} realization $\mathcal E$ of $\UB{\mathcal E}$, it
is easy to see that $\mathcal E$ has the form $\mathcal E_0^u$ for
some $u\in\k$, where $\mathcal E_0$ has one of the stated forms
depending on $j(\mathcal E)$. Then there is a twisting isomorphism
$\theta_v\:\mathcal E\lra\mathcal E_0$ for $v\in\bar\k$ satisfying
$v^2=u$.
\end{proof}

In each of the above cases, the isomorphism
$\theta_v\:\mathcal E\iso\mathcal E_0$ is defined using suitable
choices of twisting parameter $u$. Although this is ambiguous by
elements of the automorphism groups $\Aut\mathcal E\iso\Aut\mathcal E_0$,
we have the following consequence of Lemma~\ref{lem:InceFGL}.
\begin{prop}\label{prop:InceFGL-0}
The formal group law $F_{\UB{\mathcal E}}$
only depends on $\UB{\mathcal E}$, and not on
the isomorphism $\mathcal E\iso\mathcal E_0$, hence
is an invariant of $\UB{\mathcal E}$.
\end{prop}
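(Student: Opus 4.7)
The plan is to deduce this directly from Lemma~\ref{lem:InceFGL}. The formal group law $F_{\UB{\mathcal E}}$ is built by pulling the standard local parameter $-2x/y$ on $\mathcal E_0$ back along a chosen isomorphism $\theta\colon \mathcal E\iso\mathcal E_0$ to obtain a uniformizer on $\mathcal E$, then expanding the group law of $\mathcal E$ in this coordinate. Any ambiguity in the resulting power series therefore comes entirely from the freedom in choosing $\theta$.

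First I would pick two such isomorphisms $\theta,\theta'\colon\mathcal E\iso\mathcal E_0$ and set $\alpha=\theta^{-1}\o\theta'\colon\mathcal E\to\mathcal E$. Because $\theta$ and $\theta'$ are both isomorphisms onto the same target $\mathcal E_0$, the composite $\alpha$ is an automorphism of $\mathcal E$ as an abelian variety, which is precisely the kind of ambiguity flagged in the paragraph preceding the statement and for which the list of possible $\Aut\mathcal E$ has already been enumerated in the proof of Lemma~\ref{lem:InceFGL}.

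Next I would unwind what the formal group law attached to $\theta'$ actually is. Since $\theta'=\theta\o\alpha$, the local parameter associated to $\theta'$ is $(\theta')^*(-2x/y)=\alpha^*t_{\UB{\mathcal E}}$, and a short functorial calculation using the fact that $\alpha$ commutes with the group law ($\mu\o(\alpha\times\alpha)=\alpha\o\mu$) rewrites the associated power series as $F_{\UB{\mathcal E}}^\alpha$ in exactly the sense introduced just before Lemma~\ref{lem:InceFGL}. Applying Lemma~\ref{lem:InceFGL} then gives $F_{\UB{\mathcal E}}^\alpha=F_{\UB{\mathcal E}}$, so the two choices produce the same formal group law, and the invariance follows.

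The main obstacle, to the extent that there is one, is this bookkeeping step: one must verify that the relabelling of coordinates induced by $\alpha$ really does match the operation $F\mapsto F^\alpha$ of the text, and in particular that no extra normalization factor appears from the action of $\alpha$ on the chosen invariant $1$-form $\omega$. Once this identification is made, the substantive case analysis ($\Z/6$, $\Z/4$, $\Z/2$ depending on $j(\mathcal E)$) has already been discharged inside Lemma~\ref{lem:InceFGL}, and nothing further is needed.
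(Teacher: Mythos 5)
Your proposal is correct and fills in precisely the reasoning the paper leaves implicit when it asserts the proposition as a ``consequence of Lemma~\ref{lem:InceFGL}'': two choices of identification with the standard form $\mathcal E_0$ differ by an element of $\Aut\mathcal E\iso\Aut\mathcal E_0$, the resulting formal group laws differ by the twisting operation $F\mapsto F^\alpha$, and Lemma~\ref{lem:InceFGL} says this operation is trivial. You are also right to flag the bookkeeping between the coordinate change and the $F\mapsto F^\alpha$ notation, and the resolution is exactly as you indicate: the only effect of $\alpha$ on the local parameter is rescaling by a root of unity of the appropriate order, and the parity/periodicity constraint on the monomials of $F_{\UB{\mathcal E_0}}$ established in the lemma's case analysis absorbs that scalar.
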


When $\mathcal E$ is supersingular, we also have the following
useful consequence of the well known fact that
$j(\mathcal E)\in\F_{p^2}$, see~\cite[Chapter V Theorem 3.1]{Sil}.
\begin{prop}\label{prop:FGL-0-GalInce}
If $\mathcal E$ is supersingular, then the coefficients of
$F_{\UB{\mathcal E_0}}$ lie in the subfield
$\F_p(j(\mathcal E))\subset\F_{p^2}$.
\end{prop}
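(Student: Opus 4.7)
The plan is to combine three ingredients already in hand: (i) the fact recalled from Silverman that a supersingular $j$-invariant lies in $\F_{p^2}$; (ii) the explicit normal forms $\mathcal E_0$ listed in Theorem~\ref{thm:Standard WeierstarssForm}; and (iii) the earlier observation that the coefficients of $F_{\UB{\mathcal E}}$ lie in the $\F_p$-subalgebra generated by $c_4(\UB{\mathcal E})$ and $c_6(\UB{\mathcal E})$, with universal (\ie, $\UB{\mathcal E}$-independent) $\F_p$-coefficients. Given these, everything reduces to reading off $c_4$ and $c_6$ from the three standard equations.

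First I would record that for a supersingular $\mathcal E$, $j(\mathcal E)\in\F_{p^2}$, so $\F_p(j(\mathcal E))\subset\F_{p^2}$; this gives the containment statement. Then, comparing each of the three forms of $\mathcal E_0$ in Theorem~\ref{thm:Standard WeierstarssForm} with the normalized Weierstra\ss{} equation~\eqref{eqn:WeierCub}, I would simply solve for $c_4(\UB{\mathcal E_0})$ and $c_6(\UB{\mathcal E_0})$. In the $j\equiv 0$ case one gets $c_4=0$ and $c_6\in\F_p$; in the $j\equiv 1728$ case, $c_4\in\F_p$ and $c_6=0$; and in the generic case both $c_4$ and $c_6$ are rational expressions in $j(\mathcal E)$ with coefficients in $\F_p$, hence lie in $\F_p(j(\mathcal E))$. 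Either way, $c_4(\UB{\mathcal E_0}),c_6(\UB{\mathcal E_0})\in\F_p(j(\mathcal E))$.

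Finally, invoking the description of $F_{\UB{\mathcal E_0}}$ given earlier (each coefficient is an $\F_p$-linear combination of monomials $c_4^{\,i}c_6^{\,j}$ with the linear coefficients independent of the curve), I would conclude that every coefficient of $F_{\UB{\mathcal E_0}}$ lies in the $\F_p$-subalgebra generated by $c_4(\UB{\mathcal E_0}),c_6(\UB{\mathcal E_0})$, and therefore in $\F_p(j(\mathcal E))$. This yields the proposition.

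I do not expect any real obstacle: the work is essentially a bookkeeping exercise built on Theorem~\ref{thm:Standard WeierstarssForm}, the explicit polynomial dependence of $F_{\UB{\mathcal E}}$ on $(c_4,c_6)$, and the supersingular $j$-invariant fact. The only mild care required is in the $j\equiv 0,1728$ branches, where one must note that the stated values $j=0$ or $j=1728$ are themselves in $\F_p$, so $\F_p(j(\mathcal E))=\F_p$ in those two cases and the conclusion is automatic.
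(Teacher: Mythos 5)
Your argument is correct and is essentially the one the paper intends: the proposition is stated as a ``useful consequence'' of $j(\mathcal E)\in\F_{p^2}$ with no written proof, and the intended details are exactly what you supply — namely that the three normal forms from Theorem~\ref{thm:Standard WeierstarssForm} give $c_4(\UB{\mathcal E_0}),c_6(\UB{\mathcal E_0})\in\F_p(j(\mathcal E))$, and that the coefficients of $F_{\UB{\mathcal E_0}}$ are universal $\F_p$-polynomials in $c_4,c_6$. Your side remark about the $j\equiv 0,1728$ branches (where $\F_p(j)=\F_p$) is a nice touch and correctly handled.
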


\section{Categories of isogenies over finite fields and their progeny}
\label{sec:CatIsogFinFlds}

For elliptic curves $\UB{\mathcal E_1}$ and $\UB{\mathcal E_2}$
defined over a field $\k$, an \emph{isogeny} (defined over $\k$) is
a non-trivial morphism of abelian varieties
$\phi\:\mathcal E_1\lra\mathcal E_2$.
A \emph{separable isogeny} is an isogeny which is a separable morphism.
This is equivalent to the requirement that $\phi^*\omega_2\neq0$,
where~$\omega_2$ is the non-vanishing invariant $1$-form on $\mathcal E_2$.
An isogeny $\phi$ is finite and the \emph{separable degree} of $\phi$
is defined by
\[
\degs\phi=|\ker\phi|.
\]
If $\phi$ is separable then $\degs\phi=\deg\phi$, the usual notion
of degree.

Associated to the oriented elliptic curve $\UB{\mathcal E}$ over $\Fpi$
defined by Equation~\eqref{eqn:WeierCub}, are the $p^k$-th power curve
\[
\mathcal E^{(p^k)}\:y^2=
4x^3-\frac{1}{12}c_4(\UB{\mathcal E})^{(p^k)}x
+\frac{1}{216}c_6(\UB{\mathcal E})^{(p^k)}
\]
and the $1/p^k$-th power curve
\[
\mathcal E^{(1/p^k)}\:y^2=
4x^3-\frac{1}{12}c_4(\UB{\mathcal E})^{(1/p^k)}x
+\frac{1}{216}c_6(\UB{\mathcal E})^{(1/p^k)}
\]
where for $a\in\Fpi$, $a^{(1/p^k)}\in\Fpi$ is the
unique element satisfying
\[
(a^{(1/p^k)})^{(p^k)}=a.
\]
Properties of these curves can be found in \cite{Sil}.
In particular, given an elliptic curve $\UB{\mathcal E}$,
there is a canonical choice of invariant $1$-forms
$\omega^{(p^k)}$ and $\omega^{(1/p^k)}$ so that the
assignments
\begin{align*}
\UB{\mathcal E}=(\mathcal E,\omega)
&\rightsquigarrow(\mathcal E^{(p^k)},\omega^{(p^k)})
=\UB{\mathcal E}^{(p^k)},\\
\UB{\mathcal E}=(\mathcal E,\omega)
&\rightsquigarrow(\mathcal E^{(1/p^k)},\omega^{(1/p^k)})
=\UB{\mathcal E}^{(1/p^k)}
\end{align*}
extend to functors on the category of isogenies; these
powering operations on $1$-forms can easily be seen in
terms of Weierstra\ss{} forms where they take the canonical
$1$-form $dx/y$ on
\begin{align*}
\mathcal E\:&y^2=
4x^3-\frac{c_4(\UB{\mathcal E})}{12}x
+\frac{c_6(\UB{\mathcal E})}{216}\\
\intertext{to $dX/Y$ on each of the curves}
\mathcal E^{(p^k)}\:&y^2=
4x^3-\frac{1}{12}c_4(\UB{\mathcal E})^{(p^k)}x
+\frac{1}{216}c_6(\UB{\mathcal E})^{(p^k)},\\
\mathcal E^{(1/p^k)}\:&y^2=
4x^3-\frac{1}{12}c_4(\UB{\mathcal E})^{(1/p^k)}x
+\frac{1}{216}c_6(\UB{\mathcal E})^{(1/p^k)}.
\end{align*}
\begin{prop}\label{prop:IsogFactors}
Each isogeny $\phi\:\mathcal E_1\lra\mathcal E_2$ has
unique factorizations
\[
\phi=\Fr^k\o\phi_s={}_s\phi\o\Fr^k
\]
where the morphisms
${}_s\phi\:\mathcal E_1^{(p^k)}\lra\mathcal E_2$,
$\phi_s\:\mathcal E_1\lra\mathcal E_2^{(p^{1/k})}$
are separable and the morphisms denoted $\Fr^k$ are
the evident iterated Frobenius morphisms
$\Fr^k\:\mathcal E_1\lra\mathcal E_1^{(p^k)}$,
$\Fr^k\:\mathcal E_2^{(p^{1/k})}\lra\mathcal E_2$.
\end{prop}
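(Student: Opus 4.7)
The plan is to exploit the classical separable--purely-inseparable decomposition of morphisms of smooth curves in characteristic $p$. Since $\mathcal{E}_1$ and $\mathcal{E}_2$ are smooth projective curves over a perfect field, the inseparable degree $\degi\phi=\deg\phi/\degs\phi$ is a power of $p$, say $p^k$; this uniquely determined $k$ is the integer appearing in both factorizations.

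For the factorization $\phi={}_s\phi\o\Fr^k$, I would appeal to the standard result (see for instance \cite[II.2.12]{Sil}): every morphism between smooth projective curves over a perfect field factors uniquely as a separable morphism composed with the relative $q$-th power Frobenius, where $q=\degi\phi$. Applied to $\phi$, this produces $\Fr^k\:\mathcal{E}_1\to\mathcal{E}_1^{(p^k)}$ and a separable ${}_s\phi\:\mathcal{E}_1^{(p^k)}\to\mathcal{E}_2$. The target $\mathcal{E}_1^{(p^k)}$ coincides with the curve introduced just above the proposition, since both are characterized by raising the Weierstra\ss{} coefficients $c_4,c_6$ to their $p^k$-th powers, and in this explicit model $\Fr^k$ is simply the map $(x,y)\mapsto(x^{p^k},y^{p^k})$.

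For the second factorization $\phi=\Fr^k\o\phi_s$, I would use the naturality of Frobenius, together with the fact that $(\cdot)^{(p^k)}$ and $(\cdot)^{(1/p^k)}$ are mutually inverse endofunctors on the category of isogenies (both simply permute coefficients along the $\F_p$-linear automorphism of $\Fpi$ raising to the $\pm p^k$-th power). Applying $(\cdot)^{(1/p^k)}$ to the separable morphism ${}_s\phi$ yields a separable morphism $\phi_s:=({}_s\phi)^{(1/p^k)}\:\mathcal{E}_1\to\mathcal{E}_2^{(1/p^k)}$, and the fact that Frobenius is a natural transformation $\Id\Rightarrow(\cdot)^{(p^k)}$ translates to the commuting identity ${}_s\phi\o\Fr^k=\Fr^k\o\phi_s$. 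Combining gives $\phi=\Fr^k\o\phi_s$, as required.

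Uniqueness in each factorization follows from the uniqueness of the maximal purely inseparable subextension of the function-field extension $\phi^*\:K(\mathcal{E}_2)\hookrightarrow K(\mathcal{E}_1)$, from which it is immediate that no other Frobenius-power target could mediate the factorization. The main obstacle is bookkeeping: one must keep straight the two distinct Frobenius-targets $\mathcal{E}_1^{(p^k)}$ and $\mathcal{E}_2^{(1/p^k)}$ and verify that the naturality square genuinely intertwines the two decompositions. This is essentially a coordinate check, since $(\cdot)^{(p^k)}$ acts transparently on Weierstra\ss{} models, so the hard content really lies in the already-cited separable/Frobenius factorization of \cite{Sil}.
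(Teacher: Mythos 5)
The paper states Proposition~\ref{prop:IsogFactors} without proof, treating it as standard background (it is essentially \cite[II.2.12]{Sil}). Your argument is correct and is exactly the natural one: invoke Silverman's separable/Frobenius decomposition for the factorization $\phi={}_s\phi\o\Fr^k$, then transport it to the other factorization $\phi=\Fr^k\o\phi_s$ by observing that $(\cdot)^{(p^k)}$ and $(\cdot)^{(1/p^k)}$ are mutually inverse equivalences of the isogeny category and that Frobenius is a natural transformation $\Id\Rightarrow(\cdot)^{(p^k)}$, so the naturality square $\phi_s^{(p^k)}\o\Fr^k_{\mathcal E_1}=\Fr^k_{\mathcal E_2^{(1/p^k)}}\o\phi_s$ supplies the second decomposition together with its uniqueness.
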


A special case of this is involved in the following.
\begin{prop}\label{prop:Fr2-p}
For a supersingular curve elliptic curve $\mathcal E$
defined over $\k$, the iterated Frobenius
$\Fr^2\:\mathcal E\lra\mathcal E^{(p^2)}$ factors as
\[
\Fr^2\:\mathcal E\xrightarrow{[p]_{\mathcal E}}
\mathcal E\xrightarrow{\lambda}
\mathcal E^{(p^2)},
\]
where $\lambda$ is an isomorphism defined over $\k$. In
particular, if $\mathcal E$ is defined over $\F_{p^2}$
then $\mathcal E^{(p^2)}=\mathcal E$ and $\lambda\in\Aut\mathcal E$.
\end{prop}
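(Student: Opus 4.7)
The plan is to deduce the factorization by specializing Proposition~\ref{prop:IsogFactors} to the isogeny $[p]_{\mathcal E}$, together with the standard characterization of supersingularity in terms of the separable degree of multiplication by $p$.

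First I would recall the well known equivalences (see \cite[V~\S3]{Sil}) that for an elliptic curve over $\bar\F_p$, the conditions $\Hasse(\UB{\mathcal E})=0$, $\mathcal E[p]=0$, and $[p]_{\mathcal E}\:\mathcal E\lra\mathcal E$ being purely inseparable are all equivalent. Combined with the general identity $\deg[p]_{\mathcal E}=p^2$, supersingularity therefore forces $\degs[p]_{\mathcal E}=1$ and the inseparable degree of $[p]_{\mathcal E}$ to be $p^2$.

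Next I would apply Proposition~\ref{prop:IsogFactors} to write
\[
[p]_{\mathcal E}={}_s[p]\o\Fr^k\:\mathcal E\xrightarrow{\Fr^k}\mathcal E^{(p^k)}\xrightarrow{{}_s[p]}\mathcal E,
\]
with ${}_s[p]$ separable. Comparing inseparable degrees gives $p^k=p^2$, hence $k=2$, and then
\[
\deg{}_s[p]=\frac{\deg[p]_{\mathcal E}}{\deg\Fr^2}=\frac{p^2}{p^2}=1,
\]
so ${}_s[p]\:\mathcal E^{(p^2)}\lra\mathcal E$ is an isomorphism. Setting $\lambda=({}_s[p])^{-1}\:\mathcal E\lra\mathcal E^{(p^2)}$ then yields the desired relation $\Fr^2=\lambda\o[p]_{\mathcal E}$.

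Finally, rationality: since $[p]_{\mathcal E}$ and $\Fr^2$ are both defined over $\k$ (indeed $\Fr^2$ is defined over $\F_p$), the uniqueness clause of Proposition~\ref{prop:IsogFactors} forces ${}_s[p]$, and hence $\lambda$, to be defined over $\k$. For the last assertion, if $\mathcal E$ is defined over $\F_{p^2}$ then $c_4(\UB{\mathcal E})^{p^2}=c_4(\UB{\mathcal E})$ and similarly for $c_6$, so $\mathcal E^{(p^2)}=\mathcal E$ as Weierstra\ss{} cubics and $\lambda$ becomes an element of $\Aut\mathcal E$. I do not anticipate any serious obstacle in this proof; once supersingularity has been translated into pure inseparability of $[p]_{\mathcal E}$, the statement reduces to a degree bookkeeping inside the canonical factorization of Proposition~\ref{prop:IsogFactors}, and the descent of $\lambda$ to $\k$ is a formal consequence of the uniqueness of that factorization.
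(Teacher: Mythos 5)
Your proof is correct, and it follows the standard route: translate supersingularity into pure inseparability of $[p]_{\mathcal E}$ (so that its separable degree is $1$ and its inseparable degree is $p^2$), then read off $k=2$ and $\deg{}_s[p]=1$ from the canonical factorization $\phi={}_s\phi\o\Fr^k$ of Proposition~\ref{prop:IsogFactors}, and finally invoke uniqueness of that factorization to descend $\lambda=({}_s[p])^{-1}$ to $\k$. The paper states Proposition~\ref{prop:Fr2-p} without giving a proof, so there is no argument in the text to compare against; your write-up supplies exactly the degree bookkeeping the author evidently took for granted, with the $\k$-rationality of $\lambda$ (the only point that is not purely numerical) handled correctly by appealing to the uniqueness clause of the factorization applied to Galois conjugates.
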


Now let $\mathcal E_1$ and $\mathcal E_2$ be defined over
$\Fpi$ and let $\phi\:\mathcal E_1\lra\mathcal E_2$ be a
separable isogeny; then there is a finite field
$\k\subset\Fpi$ such that $\mathcal E_1$, $\mathcal E_2$
and $\phi$ are all defined over $\k$. Later we will make
use of this together with properties of zeta functions of
elliptic curves over finite fields to determine when two
curves over $\Fpi$ are isogenous.

Associated to an isogeny
$\phi\:\mathcal E_1\lra\mathcal E_2$ between two
elliptic curves defined over $\k$ there is a \emph{dual isogeny}
$\hat\phi\:\mathcal E_2\lra\mathcal E_1$ satisfying the
identities
\[
\hat\phi\o\phi=[\deg\phi]_{\mathcal E_1},
\quad
\phi\o\hat\phi=[\deg\phi]_{\mathcal E_2},
\]
where $[n]_{\mathcal E}$ denotes the multiplication by~$n$
morphism on the elliptic curve $\mathcal E$. Localizing the
category of separable isogenies of elliptic curves over finite
fields by forcing every isogeny $[n]_{\mathcal E}$ to be
invertible results in a groupoid since every other regular
isogeny also becomes invertible. Using the theory of $p$-primary
Tate modules, we will modify this construction to define a
larger category which also captures significant $p$-primary
information.

Let $\UB{\mathcal E}$ be an elliptic curve over $\Fpi$ with
a Weierstra\ss{} form as in Equation~\eqref{eqn:WeierCub}
with its associated local coordinate function
$t_{\UB{\mathcal E}}=-2x/y$ and its formal group law
$F_{\UB{\mathcal E}}(X,Y)$. We say that an isogeny
$\phi\:\mathcal E_1\lra\mathcal E_2$ is \emph{strict} if
\[
\phi^*t_{\UB{\mathcal E_2}}\equiv
t_{\UB{\mathcal E_1}}\modp{{t_{\UB{\mathcal E_1}}}^2}.
\]
This condition is equivalent to the requirement that
$\phi^*\omega_2=\omega_1$, hence a strict isogeny is separable.

For a separable isogeny $\phi\:\mathcal E_1\lra\mathcal E_2$
there is a unique factorization of the form
\begin{equation}
\label{eqn:Velu}
\phi\:\mathcal E_1
\xrightarrow{\rho\phantom{'}}\mathcal E_1/\ker\phi
\xrightarrow{\phi'}\mathcal E_2
\end{equation}
where $\phi'$ is an isomorphism, and $\rho$ is a strict isogeny.
The quotient elliptic curve $\mathcal E_1/\ker\phi$ is characterized
by this property and is constructed explicitly by V\'elu~\cite{Velu},
who also determines $\rho^*t_{(\mathcal E_1/\ker\phi,\omega)}$, where
$\omega$ is the $1$-form induced by the quotient map.

We will denote by $\Isog$ the category of elliptic curves over $\Fpi$
with isogenies $\phi\:\mathcal E_1\lra\mathcal E_2$ as its morphisms.
$\Isog$ has the subcategory $\SepIsog$ whose morphisms are the separable
isogenies. These categories have full subcategories $\Isogss$ and
$\SepIsogss$ whose objects are the supersingular curves.

These categories can be localized to produce groupoids. This can be
carried out using dual isogenies and twisting. For the Weierstra\ss{}
cubic $\mathcal E$ defined by Equation~\eqref{eqn:WeierCub}, and a
natural number~$n$ prime to~$p$, the factorization of $[n]_{\mathcal E}$
given by Equation~\eqref{eqn:Velu} has the form
\[
[n]_{\mathcal E}\:\mathcal E
\lra
\mathcal E^{n^2}
\xrightarrow{\tilde{[n]}}\mathcal E
\]
where
\[
\mathcal E^{n^2}\:
y^2=
4x^3-n^{4}\frac{c_4(\UB{\mathcal E})}{12}x
+n^{6}\frac{c_6(\UB{\mathcal E})}{216}
\]
is the twist of $\mathcal E$ by $n^2\in\Fpi^{\times}$
and $\tilde{[n]}$ is the map given by
\[
[x,y,1]\longmapsto[x/n^2,y/n^3,1].
\]
If we invert all such isogenies $[n]_{\mathcal E}$, then
as an isogeny $\phi\:\mathcal E_1\lra\mathcal E_2$ is a
morphism of abelian varieties,
\[
\phi\o[n]_{\mathcal E_1}=[n]_{\mathcal E_2}\o\phi,
\]
hence $\phi$ inherits an inverse
\[
\phi^{-1}=[n]_{\mathcal E_1}^{-1}\o\hat\phi
=\hat\phi\o[n]_{\mathcal E_2}^{-1}.
\]
The resulting localized category of isogenies will be denoted
$\Isogu$ and the evident localized supersingular category
$\Isogssu$. We can also consider the subcategories of separable
morphisms, and localize these by inverting the separable isogenies
$[n]_{\mathcal E}$, \ie, those for which $p\nmid n$. The resulting
categories $\SepIsogu$ and $\SepIsogssu$ are all full subcategories
of $\Isogu$ and $\Isogssu$.

Given $\UB{\mathcal E_1}=(\mathcal E_1,\omega_1)$,
$\UB{\mathcal E_2}=(\mathcal E_2,\omega_2)$, we extend
the action of a separable isogeny
$\phi\:\mathcal E_1\lra\mathcal E_2$ to the morphism
\[
\UB{\phi}=
(\phi,{\phi^*}^{-1})\:\UB{\mathcal E_1}\lra\UB{\mathcal E_2}.
\]
So if $\phi^*\omega_2=\lambda\omega$,
\[
\UB{\phi}(\mathbf x,\omega_1)=
(\phi(\mathbf x),\lambda^{-1}\omega_2).
\]
We will often just write $\phi$ for $\UB{\phi}$ when no
ambiguity is likely to result. Using this construction,
we define modified versions of the above isogeny categories
as follows. $\oSepIsog$ is the category whose objects are the
oriented elliptic curves over $\Fpi$ and with morphisms
\[
(\phi,\lambda^{-1}{\phi^*}^{-1})\:
(\mathcal E_1,\omega_1)\lra(\mathcal E_2,\omega_2),
\]
where $\phi\:\mathcal E_1\lra\mathcal E_2$ is a separable
isogeny and $\lambda\in\Fpi^\times$. Thus $\oSepIsog$ is
generated by morphisms of the form $\UB{\phi}$ together
with the `twisting' morphisms
$\UB{\lambda}\:(\mathcal E,\omega)\lra(\mathcal E,\omega)$
given by $\UB{\lambda}=(\Id_{\mathcal E},\lambda^{-1})$
which commute with all other morphisms. We can localize this
category to form $\oSepIsogu$ with morphisms obtained in an
obvious fashion from those of $\SepIsogu$ together with the
$\UB{\lambda}$. There are also evident full subcategories
$\oSepIsogss$ and $\oSepIsogssu$ whose objects involve only
supersingular elliptic curves.

We end this section with a discussion of two further pieces
of structure possessed by our isogeny categories, both being
actions by automorphisms of these categories. First observe
there is an action of the group of units $\Fpi^\times$ (or more
accurately, the multiplicative group scheme $\Gm$) on $\Isog$
and its subcategories described above, given by
\[
\lambda\.(\mathcal E,\omega)
=(\mathcal E^{\lambda^2},\lambda\omega),
\quad
\lambda\.\phi=\phi^{\lambda^2},
\]
where $\lambda\in\Fpi^\times$,
$\phi\:(\mathcal E_1,\omega_1)\lra(\mathcal E_2,\omega_2)$
is an isogeny and $\phi^{\lambda^2}$ is the evident composite
\[
\phi^{\lambda^2}\:
(\mathcal E_1^{\lambda^{-2}},\lambda^{-1}\omega_1)
\lra(\mathcal E_1,\omega_1)
\xrightarrow{\phi}(\mathcal E_2,\omega_2)
\lra(\mathcal E_2^{\lambda^{2}},\lambda\omega_2).
\]
The second action is induced by the Frobenius morphisms
$\Fr^k$ and their inverses. Namely,
\[
\Fr^k(\mathcal E,\omega)=(\mathcal E^{(p^k)},\omega^{(p^k)}),
\quad
\Fr^k\phi=\phi^{(p^k)},
\]
where for an isogeny
$\phi\:(\mathcal E_1,\omega_1)\lra(\mathcal E_2,\omega_2)$,
$\phi^{(p^k)}$ is the composite
\[
\phi^{(p^k)}\:
(\mathcal E_1^{(1/p^k)},\omega_1^{(1/p^k)})
\xrightarrow{\Fr^{-k}}(\mathcal E_1,\omega_1)
\xrightarrow{\phi}(\mathcal E_2,\omega_2)
\xrightarrow{\Fr^k}(\mathcal E_2^{(p^k)},\omega_2^{(p^k)}).
\]
If $\phi(x,y)=(\phi_1(x,y),\phi_2(x,y))$, then
\[
\phi^{(p^k)}(x,y)=
(\phi_1(x^{1/p^k},y^{1/p^k})^{p^k},
\phi_2(x^{1/p^k},y^{1/p^k})^{p^k}).
\]
Similar considerations apply to the inverse Frobenius morphism
$\Fr^{-k}$.

\section{Recollections on elliptic cohomology}
\label{sec:RecollEllCoh}

A general reference on elliptic cohomology is provided by the
foundational paper of Landweber, Ravenel \& Stong~\cite{LRS},
while aspects of the level~$1$ theory which we use can be found
in Landweber \cite{Land:SS} as well as our earlier
papers~\cite{heckop,homell,ellellp1}.

Let $p>3$ be a prime. We will denote by $\Ell_*$ the graded ring
of modular forms for $\SL_2(\Z)$, meromorphic at infinity and with
$q$-expansion coefficients lying in the ring of $p$-local integers
$\Z_{(p)}$. Here $\Ell_{2n}$ consists of the modular forms of
weight~$n$. We have
\begin{thm}
\label{thm:Ell*Ring}
As a graded ring,
\[
\Ell_*=\Z_{(p)}[Q,R,\Delta^{-1}],
\]
where $Q\in\Ell_8$, $R\in\Ell_{12}$ and
$\Delta=(Q^3-R^2)/1728\in\Ell_{24}$ have the $q$-expansions
\begin{align*}
Q(q)=&E_4=1+240\sum_{1\leq r}\sigma_3(r)q^r,\\
R(q)=&E_6=1-504\sum_{1\leq r}\sigma_5(r)q^r,
\notag\\
\Delta(q)=&q\prod_{n\geq1}(1-q^n)^{24}.
\end{align*}
\end{thm}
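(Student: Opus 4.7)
The plan is to build the result from the classical integral structure theorem for the ring of modular forms for $\SL_2(\Z)$. First, I would invoke the well known fact that the ring of \emph{holomorphic} modular forms for $\SL_2(\Z)$ with $q$-expansion coefficients in $\Z$ is isomorphic to
\[
\Z[E_4,E_6,\Delta]/(E_4^3-E_6^2-1728\Delta),
\]
with $E_4,E_6,\Delta$ of weights $4,6,12$ respectively. This is established either via Deligne's analysis of the moduli stack of generalized elliptic curves together with the $q$-expansion principle at the Tate curve, or classically by combining the $\C$-linear dimension formula with an integral basis argument using eigenforms.

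Second, once we $p$-localize at $p>3$, the integer $1728=2^6\cdot 3^3$ becomes a unit in $\Z_{(p)}$. The defining relation can then be solved for $\Delta$, giving $\Delta=(E_4^3-E_6^2)/1728$, and the quotient ring collapses to the free polynomial ring $\Z_{(p)}[E_4,E_6]$. In particular $\Delta$ is no longer an independent generator in the localized setting.

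Third, since allowing a pole at the cusp is equivalent, after localization, to inverting an appropriate form with a simple zero at infinity, I would pass from holomorphic to meromorphic at infinity by inverting $\Delta$. The $q$-expansion of $\Delta$ begins with $q$, so $\Delta$ has a simple zero at infinity; the $q$-expansion principle then guarantees that inverting $\Delta$ produces exactly the subring of meromorphic-at-infinity forms with $q$-expansions in $\Z_{(p)}((q))$. Renaming $Q=E_4$ and $R=E_6$ yields $\Ell_*=\Z_{(p)}[Q,R,\Delta^{-1}]$ with the stated gradings (weight $n$ forms sitting in degree $2n$). The $q$-expansions claimed in the theorem are the standard ones for the normalized Eisenstein series $E_4,E_6$ and for $\Delta$ via the product formula.

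The main obstacle is the first step: establishing the \emph{integral} structure theorem, since over $\Q$ the statement is easy from the dimension formula but the $\Z$-integral version requires either the machinery of the moduli stack plus the $q$-expansion principle at the Tate curve, or a careful study of bases of cusp forms. Once integrality is in hand, localization and inversion of $\Delta$ are formal. No hard computation is needed after that, only verification that the $q$-expansions of the three generators agree with those asserted.
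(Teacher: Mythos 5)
The paper states this theorem as a recollection of standard facts with no proof given (background references are [LRS], [Land:SS]), so there is no in-paper argument to compare against. Your proposed reconstruction is correct and is the standard route: invoke the integral structure theorem for holomorphic modular forms $\Z[E_4,E_6,\Delta]/(E_4^3-E_6^2-1728\Delta)$; observe that $1728=2^6\cdot3^3$ is a unit in $\Z_{(p)}$ for $p>3$, so the quotient collapses to the free polynomial ring $\Z_{(p)}[E_4,E_6]$; then invert $\Delta$ to pass to meromorphic-at-infinity forms. The one point worth making explicit is the localization compatibility — that for each weight, the $\Z$-lattice of integral holomorphic forms tensored with $\Z_{(p)}$ equals the $\Z_{(p)}$-module of holomorphic forms with $\Z_{(p)}$ $q$-coefficients. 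This follows from the $q$-expansion principle: the $q$-expansion map on integral forms of each fixed weight is injective with $\Z$-torsion-free cokernel, so a $\Q$-rational form whose $q$-coefficients all lie in $\Z_{(p)}$ is already $\Z_{(p)}$-rational. With that in hand, a form meromorphic at infinity with $\Z_{(p)}$ coefficients becomes holomorphic after multiplying by $\Delta^N$ for $N$ large (since $\Delta$ is non-vanishing on the upper half-plane and has a simple zero at $\infty$ with unit leading coefficient), landing in $\Z_{(p)}[E_4,E_6]$ and hence giving membership in $\Z_{(p)}[E_4,E_6,\Delta^{-1}]$, as required.
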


The element $A=E_{p-1}\in\Ell_{2(p-1)}$ is particularly important
for our present work. Using the standard notation $B_n$ for the
$n$-th Bernoulli number we have
\[
A(q)=
1-\frac{2(p-1)}{B_{p-1}}\sum_{1\leq r}\sigma_{p-2}(r)q^r
\equiv1\modp{p}.
\]
We also have $B=E_{p+1}\in\Ell_{2(p+1)}$ with $q$-expansion
\[
B(q)=
1-\frac{2(p+1)}{B_{p+1}}\sum_{1\leq r}\sigma_{p}(r)q^r.
\]
Finally, we recall that there is a canonical formal group
law $F_{\Ell}(X,Y)$ defined over $\Ell_*$ whose $p$-series
satisfies
\begin{alignat}3
[p]_{F_{\Ell}}(X)&=pX+\cdots+u_1X^p+\cdots+u_2X^{p^2}
+\text{(higher order terms)}
&&\notag\\
&\equiv u_1X^p+\cdots+u_2X^{p^2}
+\text{(higher order terms)}
&&\modp{p}
\notag\\
&\equiv u_2X^{p^2}
+\text{(higher order terms)}
&&\modp{p,u_1}.
\label{eqn:p-series}
\end{alignat}
Combining results of~\cite{Land:SS,modforms}, we obtain the
following in which $\legendre{-1}{p}$ is the Legendre symbol.
\begin{thm}
\label{thm:pAv2}
The sequence $p,A,B$ is regular in the ring $\Ell_*$,
in which the following congruences are satisfied:
\begin{alignat*}{2}
u_1&\equiv A &&\modp{p};
\\
u_2&\equiv\legendre{-1}{p}\Delta^{(p^2-1)/12}
\equiv-B^{(p-1)}\quad &&\modp{p,A}.
\end{alignat*}
\end{thm}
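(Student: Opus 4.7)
The plan is to deduce the theorem by quoting the cited sources for the $p$-series congruences, handling regularity as a corollary, and reducing the $\Delta$-formulation of $u_2$ to a $q$-expansion comparison of modular forms.

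For regularity: $\Ell_* = \Z_{(p)}[Q, R, \Delta^{-1}]$ is a localized polynomial ring over $\Z_{(p)}$, hence an integral domain, so $p$ is a non-zero-divisor. The quotient $\Ell_*/p = \F_p[Q, R, \Delta^{-1}]$ is again a domain, and $A \not\equiv 0 \pmod{p}$ because $A(q) \equiv 1 \pmod{p}$; this makes $A$ a non-zero-divisor modulo $p$. Once the $\Delta$-expression for $u_2$ below is in hand, the invertibility of $\Delta$ in $\Ell_*$ makes the class of $u_2$ a unit in $\Ell_*/(p, A)$, whence $B^{p-1}$ and therefore $B$ are units in that quotient and in particular non-zero-divisors. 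So regularity reduces to the congruences.

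The congruence $u_1 \equiv A \pmod{p}$ is the classical identification of the mod-$p$ coefficient of $X^p$ in the $p$-series of $F_\Ell$ with the Hasse invariant $E_{p-1}$ on the universal elliptic curve; I would simply quote Landweber's computation in \cite{Land:SS}. The congruence $u_2 \equiv -B^{p-1} \pmod{p, A}$ is an analogous but more delicate computation of the next coefficient in the $p$-series modulo the supersingular ideal, which is carried out explicitly in \cite{modforms}; I would invoke that result.

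The remaining identification $-B^{p-1} \equiv \legendre{-1}{p}\Delta^{(p^2-1)/12} \pmod{p, A}$ is a congruence between two modular forms of common weight $2(p^2-1)$, the second being well defined because $\Delta$ is inverted in $\Ell_*$. I would prove it by comparing $q$-expansions modulo $(p, A)$, using the classical Kummer-style congruences for $E_{p+1}$ together with the Serre/Swinnerton-Dyer control of the kernel of the $q$-expansion map on $\F_p[Q, R]/(A)$; the sign $\legendre{-1}{p}$ is pinned down by evaluation at a single supersingular $j$-invariant, where the ambiguity between $B^{p-1}$ and $\Delta^{(p^2-1)/12}$ reduces to the classical quadratic residue behavior. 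This last step is the main obstacle: although it is conceptually a weight-and-congruence comparison, the large weight $2(p^2-1)$ makes a naive $q$-expansion argument impractical, and the efficient route is to reduce modulo $(p, A)$ and exploit the finite set of supersingular points together with the classical relation between $E_{p+1}$ and $\Delta$ at those points.
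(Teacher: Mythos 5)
Your proposal largely mirrors the paper, which in fact gives no proof for this theorem beyond the one-line attribution to \cite{Land:SS} and \cite{modforms}; your regularity discussion (once $B^{p-1}$ is a unit multiple of the invertible $\Delta^{(p^2-1)/12}$ in $\ssEll_*$, $B$ itself is a unit, so regularity is automatic) is a correct and welcome supplement that the paper does not bother to record, and the citation to \cite{Land:SS} for $u_1\equiv A\pmod{p}$ is also right.

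The genuine flaw is that you have the division of labor between the two citations reversed, and this creates a logical gap. The reference \cite{modforms} proves precisely the modular-forms congruence $B^{p-1}\equiv-\legendre{-1}{p}\Delta^{(p^2-1)/12}\pmod{p,A}$ --- the present paper quotes this formula explicitly from \cite{modforms} in Section \ref{sec:Robert} --- \emph{not} the formal-group congruence $u_2\equiv-B^{p-1}$. The formal-group input, namely $u_2\equiv\legendre{-1}{p}\Delta^{(p^2-1)/12}\pmod{p,A}$, is the part that belongs to Landweber's $p$-series computation in \cite{Land:SS} (or, failing that, to a direct calculation with the Weierstra\ss{} formal group law). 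So the step you single out as ``the main obstacle'' and sketch a fresh $q$-expansion proof for is exactly the content of the reference you already have in hand, while the step you attribute to \cite{modforms} is the one that is actually left hanging in your outline. Moreover the sketch for the $B^{p-1}$ vs.\ $\Delta^{(p^2-1)/12}$ congruence is too thin to stand on its own: ``pinning down the sign by evaluation at a single supersingular $j$-invariant'' presupposes that the ratio $B^{p-1}/\Delta^{(p^2-1)/12}$ is already known to be a global constant in $\ssEll_*$, which is essentially the whole assertion, and needs the Swinnerton-Dyer structure theory applied with some care; there is also a small slip --- the common modular weight is $p^2-1$, not $2(p^2-1)$, the latter being the topological degree.
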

With the aid of this Theorem together with Landweber's Exact
Functor Theorem, in both its original form \cite{LEFT} and
its generalization due to Yagita \cite{Yagita}, we can define
\emph{elliptic cohomology} and its \emph{supersingular reduction}
by
\begin{align*}
\Ell^*(\ )&=\Ell^*\oTimes_{\BP^*}\BP^*(\ )\\
\ssEll^*(\ )&=(\Ell/(p,A))^*(\ )
\iso\Ell^*/(p,A)\oTimes_{P(2)^*}P(2)^*(\ ),
\end{align*}
where as usual, for any graded group $M_*$ we set
$M^n=M_{-n}$. The structure of the coefficient ring
$\ssEll_*$ was described in \cite{homell} and depends
on the factorization of $A\modp{p}$. In fact, $\ssEll_*$
is a product of `graded fields' and the forms of the
simple factors of $A\modp{p}$ are related to the possible
$j$-invariants of supersingular elliptic curves over $\Fpi$.

Using the definition of supersingular elliptic curves
as pairs $(\mathcal E,\omega)$, an element $f\in\ssEll_{2n}$
can be viewed as a family of sections of bundles
$\Omega^1(\mathcal E)^{\otimes n}$ assigning to
$(\mathcal E,\omega)$ the section
$f(\mathcal E,\omega)\omega^{\otimes n}$. An isomorphism
$\phi\:\mathcal E_1\lra\mathcal E_2$ for which
$\phi^*\omega_2=\lambda\omega_1$ also satisfies
\[
\phi^*f(\mathcal E_2,\omega_2)\omega_2^{\otimes n}
=f(\mathcal E_1,\omega_1)\omega_1^{\otimes n}
\]
and so
\[
\phi^*f(\mathcal E_2,\omega_2)
=\lambda^{-n}f(\mathcal E_1,\omega_1).
\]
This is formally equivalent to $f$ being a modular
form of weight $n$ in the traditional sense.

The ring $\Ell_*/(p)$ is universal for Weierstra\ss{}
elliptic curves defined over $\Fpi$ while $\ssEll_*$
is universal for those which are supersingular, in
the sense of the following result.
\begin{prop}
\label{prop:Ell*UnivProp}
The projectivization $\mathcal E$ of the cubic
\[
y^2=
4x^3-ax-b
\]
defined over $\Fpi$ is an elliptic curve if and only
if there is a ring homomorphisms $\theta\:\Ell_*/(p)\lra\Fpi$
for which
\[
\theta(Q)=12a,\quad\theta(R)=-216b.
\]
For such an elliptic curve, $\mathcal E$ is
supersingular if and only if $\theta(A)=0$.
\end{prop}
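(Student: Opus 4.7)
The plan is to reduce this proposition to two ingredients: the explicit structure of $\Ell_*/(p)$ coming from Theorem~\ref{thm:Ell*Ring}, together with the classical identification of the Hasse invariant with the mod-$p$ reduction of the Eisenstein series $A=E_{p-1}$ evaluated on a Weierstra\ss{} cubic.

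First, reducing Theorem~\ref{thm:Ell*Ring} modulo $p$ yields $\Ell_*/(p)=\F_p[Q,R,\Delta^{-1}]$ with $\Delta=(Q^3-R^2)/1728$. A ring homomorphism $\theta\:\Ell_*/(p)\lra\Fpi$ is therefore determined by its values $\theta(Q),\theta(R)\in\Fpi$, subject only to the requirement that $\theta(\Delta)\in\Fpi$ be non-zero so that $\Delta^{-1}$ admits an image. Setting $\theta(Q)=12a$ and $\theta(R)=-216b$, a direct calculation yields
\[
\theta(\Delta)=\frac{(12a)^3-(-216b)^2}{1728}=a^3-27b^2,
\]
which is the classical discriminant of the cubic $y^2=4x^3-ax-b$. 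Hence the prescription $\theta(Q)=12a$, $\theta(R)=-216b$ extends to a ring homomorphism $\Ell_*/(p)\lra\Fpi$ precisely when this discriminant is non-zero, which is the standard criterion for the projectivization of the cubic to be a non-singular elliptic curve. This settles the first equivalence.

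For the supersingular part, the key input is the classical theorem (Deligne, Katz; see~\cite{Katz:ModFunc}) that for a Weierstra\ss{} elliptic curve $\UB{\mathcal E}$ over $\Fpi$, the Hasse invariant $\Hasse(\UB{\mathcal E})$ coincides with the value on $\UB{\mathcal E}$ of the mod-$p$ reduction of $A=E_{p-1}$, viewed as a polynomial of weight $p-1$ in $c_4(\UB{\mathcal E}),c_6(\UB{\mathcal E})$. Since the normalisation $c_4(\UB{\mathcal E})=12a$, $c_6(\UB{\mathcal E})=-216b$ matches the chosen $\theta$, the first part gives $\theta(A)=A(\mathcal E)$ in $\Fpi$, and the defining condition $\Hasse(\UB{\mathcal E})=0$ for supersingularity translates directly into $\theta(A)=0$.

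The main obstacle is this identification of $A$ with the Hasse invariant. The $q$-expansion congruence $A(q)\equiv 1\modp{p}$ recorded above is consistent with it and is in fact the starting point of the proof, which relies on the essential uniqueness modulo $p$ of a modular form of weight $p-1$ on the moduli of elliptic curves. This is a standard ingredient in the literature on supersingular moduli, and the proposal is to cite it rather than reprove it here.
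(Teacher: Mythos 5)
Your argument is essentially the paper's own: it reduces the first equivalence to the well-known fact that the discriminant $a^3-27b^2=\theta(\Delta)$ must be non-zero for $\Ell_*/(p)=\F_p[Q,R,\Delta^{-1}]$ to receive a ring map, which is precisely the nonsingularity criterion; and it reduces the second to the standard identification $\theta(A)=\Hasse(\UB{\mathcal E})$, cited rather than reproved. You spell out the arithmetic a bit more explicitly than the paper does, but the decomposition and the appeals to standard references are the same.
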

The first part amounts to the well known fact that
the discriminant of $\mathcal E$ is
$\Delta(\mathcal E)=a^3-27b^2$, whose non-vanishing
is equivalent to the nonsingularity of $\mathcal E$.
The second part of this result is equivalent to the
statement that $\theta(A)=\Hasse(\UB{\mathcal E})$,
a result which can be found in \cite{Husemoller,Sil}
together with further equivalent conditions.

Next we discuss some cooperation algebras. In \cite{ellellp1},
we gave a description of the cooperation algebra
$\Gamma^0_*=\Ell_*\Ell$ as a ring of functions on the
category of isogenies of elliptic curves defined over
$\C$. We will be interested in the supersingular
cooperation algebra
\[
\ssGamma^0_*=\ssEll_*\Ell=\ssEll_*\oTimes_{\Ell_*}\Ell_*\Ell
\iso\ssEll_*(\Ell).
\]
The ideal $(p,A)\ideal\Ell_*$ is invariant under the
$\Gamma^0_*$-coaction on $\Ell_*$ and hence $\ssGamma^0_*$
can be viewed as the quotient of $\Gamma^0_*$ by the ideal
generated by the image of $(p,A)$ in $\Gamma^0_*$ under
either the left or equivalently the right unit map
$\Ell_*\lra\Gamma^0_*$. The pair $(\ssEll_*,\ssGamma^0_*)$
therefore inherits the structure of a Hopf algebroid over
$\F_p$.

The Hopf algebroid structure on $(\ssEll_*,\ssGamma^0_*)$
implies that
\[
\Spec_{\F_p}\ssGamma^0_*=\Alg_{\F}(\ssGamma^0_*,\Fpi)
\]
is a groupoid, or at least this is so if the grading is ignored.
By the discussion of Devinatz \cite[section 1]{Devinatz} (see
also our Section \ref{sec:CohTopHA}), the grading is equivalent
to an action of $\Gm$ which here is derived from the twisting
action discussed in Section~\ref{sec:EllCurFinFlds}. Let $\ssSELLFGL$
denote the category of supersingular elliptic curves over $\Fpi$
with the morphism set
\[
\ssSELLFGL(\UB{\mathcal E_1},\UB{\mathcal E_2})
=
\{f\:F_{\UB{\mathcal E_1}}\lra F_{\UB{\mathcal E_2}}:
\text{$f$ is a strict isomorphism}\}.
\]
There is an action of $\Gm$ on this extending the twisting action
on curves, and also an action of the Galois group $\Gal(\Fpi/\F_p)$.
These actions are compatible with the composition and inversion maps.
$\ssSELLFGL$ is also a `formal scheme' in the sense used by
Devinatz~\cite{Devinatz}, thus it can be viewed as a pro-scheme and
we can consider continuous functions $\ssSELLFGL\lra\Fpi$ where the
codomain has the discrete topology.
\begin{thm}
\label{thm:GammaReps0}
There is a natural isomorphism of groupoids with $\Gm$-action,
\[
\Spec_{\Fpi}\Fpi\otimes\ssGamma^0_*\iso\ssSELLFGL.
\]
Moreover, $\Fpi\otimes\ssGamma^0_{2n}$ can be identified with the
set of all continuous functions $\ssSELLFGL\lra\Fpi$ of weight~$n$
and $\ssGamma^0_{2n}\subset\Fpi\otimes\ssGamma^0_{2n}$ can be
identified with the subset of Galois invariant functions.
\end{thm}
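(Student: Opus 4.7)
The plan is to bootstrap from the integral statement, where the structure of $\Gamma^0_*=\Ell_*\Ell$ is understood, and then descend along the quotient by the invariant ideal $(p,A)\ideal\Ell_*$. The key input is that, following the approach of~\cite{ellellp1} transported to characteristic~$p$, the set $\Alg(\Gamma^0_*,\Fpi)$ is in natural bijection with triples consisting of two oriented Weierstra\ss{} elliptic curves $\UB{\mathcal E_1},\UB{\mathcal E_2}$ over $\Fpi$ together with a strict isomorphism $F_{\UB{\mathcal E_1}}\xrightarrow{\iso}F_{\UB{\mathcal E_2}}$ of their associated formal group laws. The Hopf algebroid operations on $\Gamma^0_*$ --- left and right units, coproduct, and antipode --- correspond respectively to the source and target of such a triple, to composition, and to inversion.

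I would then apply Proposition~\ref{prop:Ell*UnivProp}: a ring map $\theta\:\Ell_*/(p)\lra\Fpi$ extends to $\ssEll_*=\Ell_*/(p,A)$ precisely when $\theta(A)=\Hasse(\UB{\mathcal E})=0$, \ie\ when the associated curve is supersingular. Since $(p,A)$ is invariant for the Hopf algebroid $(\Ell_*,\Gamma^0_*)$, the quotient $(\ssEll_*,\ssGamma^0_*)$ inherits a Hopf algebroid structure, and its $\Fpi$-points are exactly those triples from the previous step whose source and target curves are both supersingular. By definition this is the morphism set of $\ssSELLFGL$, which gives the required isomorphism of groupoids.

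Third, I would match the additional structure. By the general discussion of Devinatz~\cite{Devinatz}, the grading on $\ssGamma^0_*$ induces a $\Gm$-action on $\Spec_{\Fpi}\Fpi\otimes\ssGamma^0_*$ under which $c_4$ and $c_6$ scale with weights~$4$ and~$6$; this matches on the nose the twisting action $\lambda\.(\mathcal E,\omega)=(\mathcal E^{\lambda^2},\lambda\omega)$ of Section~\ref{sec:CatIsogFinFlds}, so that homogeneous elements of degree~$2n$ in $\Fpi\otimes\ssGamma^0_*$ correspond precisely to weight-$n$ functions. For continuity, $\ssSELLFGL$ is a pro-scheme filtered by the order of truncation of strict FGL isomorphisms, and $\Fpi\otimes\ssGamma^0_*$ is the corresponding filtered colimit of coordinate rings, so a function valued in the discrete field $\Fpi$ is continuous iff it factors through a finite stage. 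The final Galois assertion follows by standard descent: since $\ssGamma^0_*$ is an $\F_p$-algebra, the inclusion $\ssGamma^0_{2n}\hookrightarrow(\Fpi\otimes_{\F_p}\ssGamma^0_{2n})^{\Gal(\Fpi/\F_p)}$ is surjective.

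The main obstacle is the first step: verifying that $\Gamma^0_*$ genuinely co-represents the functor of pairs of oriented Weierstra\ss{} cubics over $\Fpi$ together with a strict FGL isomorphism. This requires reconciling the topological definition of $\Ell_*\Ell$ via Landweber exactness with a purely algebraic/modular interpretation, and will essentially repeat the analogous complex-analytic argument from~\cite{ellellp1}. Once that input is in hand, the descent to supersingular curves, the matching of $\Gm$-actions, the continuity assertion, and the Galois-invariance statement are all essentially formal.
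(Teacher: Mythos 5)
Your overall strategy matches the paper's: identify what $\Gamma^0_*$ co-represents, specialise to $\Fpi$-points, cut down to the supersingular locus via Proposition~\ref{prop:Ell*UnivProp}, match the grading with the $\Gm$-action, and finish by Galois descent. Those last three steps are exactly what the paper does, and you handle them correctly.

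Where you diverge is in the key step, and your proposed route there is more awkward than necessary. You say the identification of $\Alg(\Gamma^0_*,\Fpi)$ with triples (two oriented Weierstra\ss{} curves plus a strict FGL isomorphism) should come from ``the approach of~\cite{ellellp1} transported to characteristic~$p$,'' and you flag at the end that this ``will essentially repeat the analogous complex-analytic argument from~\cite{ellellp1}.'' But the description of $\Ell_*\Ell$ in~\cite{ellellp1} is framed in terms of lattices in $\C$ and generalized modular forms, and that Archimedean picture has no direct analogue over $\Fpi$; trying to ``transport'' it is the wrong move. The paper's proof instead invokes two standard inputs: Landweber exactness gives
\[
\Ell_*\Ell\iso\Ell_*\oTimes_{\MU_*}\MU_*\MU\oTimes_{\MU_*}\Ell_*,
\]
and Quillen's theorem says $\MU_*\MU$ co-represents strict isomorphisms of formal group laws. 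Together these already show that ring maps $\Ell_*\Ell\lra R$ are the same as pairs of $\Ell_*$-algebra structures on $R$ together with a strict isomorphism of the induced formal group laws; there is nothing to reconcile with a ``purely algebraic/modular interpretation,'' since the FGL description already is the modular one once Proposition~\ref{prop:Ell*UnivProp} is invoked to read ring maps $\Ell_*\lra\Fpi$ as Weierstra\ss{} curves. So the ``main obstacle'' you identify is not actually an obstacle on the paper's route, and the ellellp1 detour would be both harder and less robust in characteristic~$p$. Replace that step with the Landweber-plus-Quillen identification and the rest of your argument goes through as you wrote it.
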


The proof is straightforward, given the existence of identification
of $\Ell_*\Ell$ as
\[
\Ell_*\Ell=\Ell_*\oTimes_{\MU_*}\MU_*\MU\oTimes_{\MU_*}\Ell_*,
\]
and the universality of $\MU_*\MU$ for strict isomorphisms of formal
group laws due to Quillen \cite{Adams:Chicago,Ravenel:book}. We will
require a modified version of his result.

Recall from~\cite{Adams:Chicago,Ravenel:book} that
\[
\MU_*MU=\MU_*[b_k:k\geq1]
\]
with the convention that $b_0=1$, and that the coaction is determined
by the formula
\[
\sum_{k\geq0}\psi b_kT^{k+1}=
\sum_{r\geq0}1\otimes b_r(\sum_{s\geq0}b_s\otimes1\,T^{s+1})^{r+1}.
\]
This coaction corresponds to composition of power series with leading
term~$T$. We can also form the algebras $\MU_*[u,u^{-1}]$ and
$\MU_*[u,u^{-1}][b_0,b_0^{-1},b_k:k\geq1]$ in which $|u|=|b_0|=0$ and
there is a coaction corresponding to composition of power series with
invertible leading term,
\[
\sum_{k\geq0}\psi b_kT^{k+1}=
\sum_{r\geq0}1\otimes b_r(\sum_{s\geq0}b_s\otimes1\,T^{s+1})^{r+1}.
\]
This also defines a Hopf algebroid
$(\MU_*[u,u^{-1}],\MU_*[u,u^{-1}][b_0,b_0^{-1},b_k:k\geq1])$
whose right unit is given by
\[
\eta_R(xu^n)=\eta_R(x)u^{d+n}b_0^n,
\]
where $x\in\MU_{2d}$ and $\eta_R(x)$ is the image of $x$ under the
usual right unit $\MU_*\lra\MU_*\MU$. There is a ring epimorphism
$\MU_*[u,u^{-1}][b_0,b_0^{-1},b_k:k\geq1]\lra\MU_*\MU$ under which
$u,b_0\longmapsto1$ and which induces a morphism of Hopf algebroids
\[
(\MU_*[u,u^{-1}],\MU_*[b_0,b_0^{-1},b_k:k\geq1])\lra(\MU_*,\MU_*\MU).
\]
We can form a Hopf algebroid $(\Ell_*[u,u^{-1}],\Gamma_*)$ by setting
\[
\Gamma_*=\Ell_*[u,u^{-1}]\oTimes_{\MU_*[u,u^{-1}]}
\MU_*[u,u^{-1}][b_0,b_0^{-1},b_k:k\geq1]
\oTimes_{\MU_*[u,u^{-1}]}\Ell_*[u,u^{-1}]
\]
and there is an induced morphism of Hopf algebroids
\[
(\Ell_*[u,u^{-1}],\Gamma_*)\lra(\Ell_*,\Gamma^0_*).
\]
Similarly, we can define Hopf algebroid $(\ssEll_*[u,u^{-1}],\ssGamma_*)$
with
\[
\ssGamma_*=\ssEll_*[u,u^{-1}]\oTimes_{\MU_*[u,u^{-1}]}
\MU_*[b_0,b_0^{-1},b_k:k\geq1]
\oTimes_{\MU_*[u,u^{-1}]}\ssEll_*[u,u^{-1}].
\]

Now let $\ssELLFGL$ denote the category whose objects are the supersingular
oriented elliptic curves over $\Fpi$ with morphisms being the isomorphisms
of their formal group laws; this category is a topological groupoid with
$\Gm$-action, containing $\ssSELLFGL$. Using the canonical Weierstra\ss{}
realizations of Theorem~\ref{thm:Standard WeierstarssForm}, we have the
following result.
\begin{thm}\label{thm:GammaReps}
There is a natural isomorphism of groupoids with $\Gm$-action,
\[
\Spec_{\Fpi}\Fpi\otimes\ssGamma_*\iso\ssELLFGL.
\]
Moreover, $\Fpi\otimes\ssGamma_{2n}$ can be identified with
the set of all continuous functions $\ssELLFGL\lra\Fpi$ of
weight $n$ and $\ssGamma_{2n}\subset\Fpi\otimes\ssGamma_{2n}$
can be identified with the subset of Galois invariant functions.
\end{thm}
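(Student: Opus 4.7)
The plan is to parallel the proof of Theorem~\ref{thm:GammaReps0}, but with the strict/non-strict dichotomy handled by the extra generators $b_0, b_0^{-1}$ and the bookkeeping variable $u$ in the enlarged Hopf algebroid. The heart of the matter is a modified Quillen theorem: $\MU_*[u,u^{-1}][b_0,b_0^{-1},b_k:k\geq 1]$ is the universal ring carrying a pair of formal group laws together with a (not necessarily strict) isomorphism $f(T) = \sum_{k\geq 0} b_k T^{k+1}$ between them, with $u$ tracking the action on invariant $1$-forms. I would begin by making this precise: an $\Fpi$-point of $\Spec\MU_*[u,u^{-1}][b_0,b_0^{-1},b_k:k\geq 1]$ is exactly the data of two formal group laws over $\Fpi$, an isomorphism between them, and a choice of unit $u \in \Fpi^\times$; the right-unit formula $\eta_R(xu^n) = \eta_R(x)\,u^{d+n}b_0^n$ for $x \in \MU_{2d}$ encodes how $u$ transforms under such an isomorphism, i.e.\ how the generator of $\omega$ rescales by the leading coefficient $b_0$.

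Next I would base-change along $\MU_*[u,u^{-1}] \to \Ell_*[u,u^{-1}] \to \ssEll_*[u,u^{-1}]$ as in the construction of $\ssGamma_*$. By Propositions~\ref{prop:Ell*UnivProp} and \ref{prop:InceFGL-0}, a pair of ring maps $\ssEll_*[u,u^{-1}] \rightrightarrows \Fpi$ corresponds precisely to a pair of supersingular oriented elliptic curves $\UB{\mathcal E_1}, \UB{\mathcal E_2}$ over $\Fpi$ together with their canonically associated formal group laws $F_{\UB{\mathcal E_i}}$; and the middle tensor factor then records an arbitrary isomorphism $F_{\UB{\mathcal E_1}} \iso F_{\UB{\mathcal E_2}}$. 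This matches the morphism set of $\ssELLFGL$ on the nose. The groupoid composition and inverse come from the Hopf algebroid structure, and the $\Gm$-action from the internal grading corresponds on the geometric side to the twisting action of $\Gm$ on oriented curves described in Section~\ref{sec:EllCurFinFlds}, so the isomorphism is $\Gm$-equivariant.

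For the second half, I would interpret $\ssEll_*$ as a finite product of graded fields (as recalled after Theorem~\ref{thm:pAv2}), so that $\ssGamma_*$ is pro-finitely generated over $\ssEll_*$ and $\ssELLFGL$ is naturally a formal scheme (a pro-scheme) in Devinatz's sense. Under the isomorphism of the first half, the weight $n$ summand $\Fpi \otimes \ssGamma_{2n}$ then corresponds to continuous functions $\ssELLFGL \to \Fpi$ on which $\Gm$ acts through $\lambda \mapsto \lambda^n$, i.e.\ functions of weight $n$. Finally, since $\ssGamma_* \to \Fpi \otimes \ssGamma_*$ is obtained by base change from $\F_p$ to $\Fpi$, faithfully flat (Galois) descent identifies $\ssGamma_{2n}$ with the $\Gal(\Fpi/\F_p)$-invariants.

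The main obstacle I expect is keeping the topologies and the $\Gm$- and Galois-actions consistent through all three tensor product factors: the middle factor $\MU_*[u,u^{-1}][b_0,b_0^{-1},b_k:k\geq 1]$ is not finitely generated, so both sides must be viewed as formal/pro-objects, and one must check carefully that the Galois action on the continuous function ring $\Mapc(\ssELLFGL,\Fpi)$ descends to $\ssGamma_*$ rather than only to some completion thereof. Granted the analogous fact for $\ssGamma^0_*$ from Theorem~\ref{thm:GammaReps0}, the extension by the factor $b_0^{\Z} \times \prod_{k\geq 1}\bbB^{k}$ (where $\bbB$ is the affine line of leading $b_k$'s) is free and so presents no new descent obstruction, which renders this step routine once the bookkeeping is set up correctly.
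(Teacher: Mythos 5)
Your proposal follows the route the paper itself indicates and leaves implicit: the modified Quillen universality of $\MU_*[u,u^{-1}][b_0,b_0^{-1},b_k:k\geq1]$ for not-necessarily-strict isomorphisms of formal group laws, base change along $\MU_*[u,u^{-1}]\to\Ell_*[u,u^{-1}]\to\ssEll_*[u,u^{-1}]$ using Propositions~\ref{prop:Ell*UnivProp} and \ref{prop:InceFGL-0}, and Galois descent to identify the $\F_p$-rational part. The extra bookkeeping you add around the pro-scheme structure and the roles of $u$ and $b_0$ simply makes explicit what the paper's setup between Theorems~\ref{thm:GammaReps0} and \ref{thm:GammaReps} already builds in.
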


Later we will give a different interpretation of $\ssGamma^0_*$
in terms of the supersingular category of isogenies.

\section{Tate modules}\label{sec:TateMods}

In this section we discuss Tate modules of elliptic curves over
finite fields. While the definition and properties of the Tate
module $\TM_\ell\mathcal E$ for primes $\ell\neq p$ can be found
for example in~\cite{Husemoller,Sil}, we require the details for
$\ell=p$. Suitable references are provided
by~\cite{Waterhouse,WW-JM,Fontaine,Haz}. Actually, it is surprisingly
difficult to locate full details of this material for abelian varieties
in the literature, which seems to have originally appeared in unpublished
papers of Tate \etal.

In this section $\k$ will be a perfect field of characteristic $p>0$
and $\W(\k)$ its ring of \emph{Witt vectors}, endowed with its usual
structure of a local ring (if $\k$ is finite it is actually a complete
discrete valuation ring). The absolute Frobenius automorphism
$x\longmapsto x^p$ on $\k$ lifts uniquely to an automorphism
$\sigma\:\W(\k)\lra\W(\k)$ and we will often use the notation
$x^{(p)}=\sigma(x)$ for this. Let $\D_\k$ be the \emph{Dieudonn\'e
algebra}
\[
\D_\k=\W(\k)\<\rmF,\rmV\>,
\]
\ie, the non-commutative $\W(\k)$-algebra generated by
the elements $\rmF$ and $\rmV$ subject to the relations
\[
\rmF\rmV=\rmV\rmF=p,\quad
\rmF a=a^{(p)}\rmF,\quad
a\rmV=\rmV a^{(p)},
\]
for $a\in\W(\k)$. Let $\Mod_{\D_\k}^{\mathrm{f.l.}}$ be
the category of finite length $\D_\k$-modules and
$\CommGpSch_\k[p]$ be the category of finite commutative
group schemes over $\k$ with rank of the form $p^d$.
\begin{thm}
\label{thm:FGL-TateMod}
There is an anti-equivalence of categories
\begin{align*}
\CommGpSch_\k[p]&\longleftrightarrow\Mod_{\D_\k}^{\mathrm{f.l.}}
\\
G&{\ }\leftrightsquigarrow\mathrm{M}(G).
\end{align*}
Moreover, if $\rank G=p^s$, then $\mathrm{M}(G)$ has
length $s$ as a $\W(\k)$-module.
\end{thm}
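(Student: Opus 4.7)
The plan is to construct the anti-equivalence as the classical Dieudonné correspondence, assembled from its restrictions to the three canonical types of finite commutative $p$-power rank group schemes over the perfect field $\k$. First I would invoke the connected--étale sequence $0 \to G^\circ \to G \to G^{\mathrm{\acute et}} \to 0$ (which splits canonically since $\k$ is perfect) and then, using Cartier duality, further decompose $G^\circ$ into its multiplicative and its local-local (bi-infinitesimal) summands. This reduces the claim to verifying the anti-equivalence separately on the étale, multiplicative, and local-local full subcategories, and then gluing the three instances along the canonical splittings.

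On each piece I would define $\mathrm{M}$ via a canonical representing object. For étale $G$, set $\mathrm{M}(G)=G(\bar\k)$ with $\rmF$ acting as the arithmetic Frobenius of $\bar\k/\k$ and $\rmV=p\rmF^{-1}$; on this subcategory $\rmF$ is invertible, matching the structure of the relevant quotient of $\D_\k$. The multiplicative case is obtained from the étale one by Cartier duality, which interchanges the roles of $\rmF$ and $\rmV$. The essential case is local-local, where I would take $\mathrm{M}(G)=\Hom_{\k\text{-gp}}(G,CW_\k)$ with $CW_\k$ the ind-group scheme of Witt covectors over $\k$; the natural Frobenius and Verschiebung endomorphisms of $CW_\k$ induce the $\D_\k$-action, and both operators act topologically nilpotently as required.

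Exactness of $\mathrm{M}$ on each subcategory comes from the injectivity of the representing objects ($\Q_p/\Z_p$, $\Gm$, and $CW_\k$) as fppf sheaves on that subcategory, and the length formula follows by dévissage from the three simple objects $\Z/p$, $\mubf_p$ and $\alpha_p$, each of which has Dieudonné module isomorphic to $\W(\k)/p$ of $\W(\k)$-length one. Full faithfulness on each piece reduces via Yoneda to computing the endomorphism ring of the representing object and matching it to the corresponding quotient of $\D_\k$. The main obstacle will be essential surjectivity in the local-local case: given a finite length $\D_\k$-module $M$ on which $\rmF$ and $\rmV$ act topologically nilpotently, I would present $M$ as a cokernel $\D_\k^{m}\to\D_\k^{n}\to M\to 0$ and realise it as $\Hom(G,CW_\k)$ for $G$ cut out of a product of truncated Witt vector group schemes by the dual matrix of Frobenius and Verschiebung polynomials; verifying that this $G$ has the correct rank and that $\mathrm{M}(G)\cong M$ is where the bulk of the work lies, and for this I would lean on the detailed analyses in Fontaine and Hazewinkel cited in the introduction to this section.
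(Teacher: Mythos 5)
The paper does not actually give a proof of this theorem: it is stated as classical background and the reader is referred to Waterhouse, Waterhouse--Milne, Fontaine, Demazure and Hazewinkel. Your overall strategy --- decompose a finite commutative $p$-power group scheme over a perfect field into its \'etale, multiplicative and local-local pieces, define $\mathrm M$ on each piece via a representing (co)group, check exactness and full faithfulness piecewise, and get the length formula by d\'evissage from $\Z/p$, $\mubf_p$, $\alpha_p$ --- is exactly the standard route in those sources, so there is no genuine divergence from the paper's (implicit) argument.

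There is, however, one real gap in your sketch: the \'etale Dieudonn\'e module is not simply the abelian group $G(\bar\k)$. That set is a finite $\Z_p$-module carrying a Galois action, but it is not in any natural way a $\W(\k)$-module, and it does not satisfy the length formula you need. For instance, with $\k=\Fpi$ and $G=\Z/p$ one has $\rank G=p$, so $\mathrm M(G)$ must be a $\W(\Fpi)$-module of length $1$, i.e.\ isomorphic to $\W(\Fpi)/p\iso\Fpi$; but $G(\bar\k)=\F_p$ is strictly smaller. The correct \'etale module is (in the contravariant convention used here) something like $\bigl(\W(\bar\k)\otimes_{\Z_p}\Hom_{\Z_p}(G(\bar\k),\Q_p/\Z_p)\bigr)^{\Gal(\bar\k/\k)}$ with $\rmF$ acting $\sigma$-semilinearly, or equivalently the restriction of the uniform formula $\Hom_{\k\text{-}\mathrm{gp}}(G,CW_\k)$ to the \'etale subcategory. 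Since you already invoke $CW_\k$ for the local-local part, the cleanest repair is to use $\Hom(G,CW_\k)$ uniformly for all three pieces (being careful that $CW_\k$ has both a unipotent and a multiplicative--\'etale part, so its behaviour differs by subcategory), rather than substituting an ad hoc object in the \'etale case. A corresponding caution applies to the multiplicative case: in the contravariant theory Cartier duality sends $\mathrm M(G)$ to a $\W(\k)$-linear dual with $\rmF$ and $\rmV$ interchanged, not literally to the same construction with labels swapped, and this needs to be spelled out for the length count to go through. With those two points repaired, the outline matches the cited classical proof.
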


This result can be extended to $\DivGp_\k$, the category of
\emph{$p$-divisible groups} over $\k$.
\begin{thm}\label{thm:DivGp-TateMod}
There is an anti-equivalence of categories
\begin{align*}
\DivGp_\k&\longleftrightarrow\Mod_{\D_\k}^{\mathrm{f.l.}}\\
G&{\ }\leftrightsquigarrow\mathrm{M}(G).
\end{align*}
Moreover, if $\rank G=p^s$, $\mathrm{M}(G)$ is a free
$\W(\k)$-module of rank $s$.
\end{thm}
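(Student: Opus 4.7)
The plan is to deduce Theorem~\ref{thm:DivGp-TateMod} from Theorem~\ref{thm:FGL-TateMod} by a limiting procedure, using the fact that a $p$-divisible group of height $h$ over $\k$ is, by definition, an inductive system $G=\varinjlim G_n$ of finite commutative group schemes with $G_n=G[p^n]$ of rank $p^{hn}$, in which the transition $G_n\hookrightarrow G_{n+1}$ is inclusion of the $p^n$-torsion, and multiplication by $p^n$ on $G_{n+1}$ is surjective onto $G_1$.

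First I would apply the contravariant functor $\rmM$ of Theorem~\ref{thm:FGL-TateMod} termwise to obtain an inverse system of finite length $\D_\k$-modules $(\rmM(G_n))$. The defining short exact sequence of finite group schemes
\[
0\lra G_n\lra G_{n+1}\xrightarrow{p^n}G_1\lra 0
\]
dualises, under the exact anti-equivalence $\rmM$, to a short exact sequence of $\D_\k$-modules that identifies $\rmM(G_n)\iso\rmM(G_{n+1})/p^n\rmM(G_{n+1})$; in particular the transition maps $\rmM(G_{n+1})\lra\rmM(G_n)$ are surjective. I would then define
\[
\rmM(G)=\invlim{n}\rmM(G_n).
\]
This module is $p$-adically complete, and the previous identification gives $\rmM(G)/p^n\rmM(G)\iso\rmM(G_n)$, a $\D_\k$-module of $\W(\k)$-length $hn$. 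In particular $\rmM(G)/p\rmM(G)$ has length $h$ over $\k$, so by Nakayama's lemma $\rmM(G)$ is generated by $h$ elements; comparison of lengths at each truncation then forces freeness of rank $h$ over $\W(\k)$.

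To promote this to an anti-equivalence I would construct the inverse functor: given a finitely generated free $\W(\k)$-module $N$ of rank $s$ with a compatible $\D_\k$-action, form the finite length $\D_\k$-modules $N/p^nN$, apply the inverse of the anti-equivalence of Theorem~\ref{thm:FGL-TateMod} to obtain finite commutative group schemes $H_n$ of rank $p^{sn}$, and check that the inclusion $N/p^nN\hookrightarrow N/p^{n+1}N$ induced by multiplication by $p$ (with cokernel $N/pN$) produces, after dualising, the $p^n$-torsion inclusion $H_n\hookrightarrow H_{n+1}$ with quotient $H_1$. This is exactly what is needed to present $(H_n)$ as a $p$-divisible group, and fullness, faithfulness, and essential surjectivity then transfer term by term from Theorem~\ref{thm:FGL-TateMod} via the (co)limit.

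The main obstacle I expect is the verification that the inverse limit $\rmM(G)$ is genuinely free of the correct rank over $\W(\k)$, rather than merely a flat or torsion-free pro-object. This rests on the identification $\rmM(G_n)\iso\rmM(G_{n+1})/p^n$, which in turn requires exactness of $\rmM$ on the short exact sequences $0\to G_n\to G_{n+1}\to G_1\to 0$ from Theorem~\ref{thm:FGL-TateMod}; once this is in hand, everything else is formal.
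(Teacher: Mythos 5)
Your construction $\rmM(G)=\invlim{n}\rmM(G_n)$ over the maps $\rmM(j_n)$ is precisely the paper's sketch, so the strategy agrees; the paper merely records the formula and defers to the cited references, while you fill in the surrounding argument. There is, however, a genuine gap in one step. Dualising $0\to G_n\xrightarrow{j_n}G_{n+1}\xrightarrow{p^n}G_1\to 0$ under the exact anti-equivalence $\rmM$ of Theorem~\ref{thm:FGL-TateMod} gives only
\[
\rmM(G_n)\iso\rmM(G_{n+1})/\im(\rmM(p^n)),
\]
and not directly $\rmM(G_{n+1})/p^n\rmM(G_{n+1})$. Since $\rmM(G_n)$ is killed by $p^n$, one certainly has $p^n\rmM(G_{n+1})\subseteq\im(\rmM(p^n))$, but the reverse inclusion is exactly the structural input you need and cannot be read off from exactness alone. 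It is equivalent to the statement that each $\rmM(G_n)$ is a \emph{free} $\W(\k)/p^n$-module of rank $s$, which you should prove first. To do this, note that $\rmM$ is additive, so $\rmM([p]_{G_n})=p\.\Id$ on $\rmM(G_n)$; its cokernel $\rmM(G_n)/p\rmM(G_n)$ is then identified, by exactness, with $\rmM(\ker[p]_{G_n})=\rmM(G_1)$, which has $\W(\k)$-length $s$. Since $\rmM(G_n)$ has total length $ns$, is killed by $p^n$, and $\W(\k)$ is a complete discrete valuation ring, the structure theorem for finite-length modules forces $\rmM(G_n)\iso(\W(\k)/p^n)^s$. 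With that in hand, the identification $\ker(\rmM(j_n))=p^n\rmM(G_{n+1})$ follows by a length comparison, and your Nakayama/length-comparison argument for freeness of the inverse limit as well as the quasi-inverse $N\rightsquigarrow(N/p^nN)_n$ go through as described. You do flag this as the main obstacle, so the gap is one of detail rather than of design.

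A side remark: the target category $\Mod_{\D_\k}^{\mathrm{f.l.}}$ printed in the theorem must be a slip, since a free $\W(\k)$-module of positive rank has infinite length; the intended codomain is the category of $\D_\k$-modules that are finitely generated and free over $\W(\k)$, which is what your quasi-inverse construction implicitly uses.
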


A $p$-divisible group $G$ of rank $p^s$ is a collection of
finite group schemes $G_n$ ($n\geq0$) with $\rank G_n=p^{ns}$
and exact sequences of abelian group schemes
\[
0\lra G_n\xrightarrow{j_n}G_{n+1}\lra G_1\lra0
\]
for $n\geq0$. The extension of the result to such groups is
accomplished by setting
\[
\mathrm{M}(G)=\invlim{n}\mathrm{M}(G_n)
\]
where the limit is taken over the inverse system of maps
$\mathrm{M}(j_n)\:\mathrm{M}(G_{n+1})\lra\mathrm{M}(G_n)$.
The main types of examples we will be concerned with here
are the following.

If $F$ is a $1$-dimensional formal group law over $\k$ of
height $h$, then the $p^n$-series of $F$ has the form
\begin{equation}
\label{eqn:p-n-series}
[p^n]_F(X)\equiv uX^{p^{nh}}\modp{X^{p^{nh}+1}}
\end{equation}
where $u\in\k^\times$. We have an associated $p$-divisible
group $\ker[p^\infty]_F$ of rank $p^h$ with
\[
(\ker[p^\infty]_F)_n=\ker[p^n]_F=\Spec(\k[[X]]/([p^n]_F(X))).
\]

Let $\mathcal E$ be a supersingular elliptic curve defined
over $\k$. The family of finite group schemes
\[
\mathcal E[p^n]=\ker[p^n]_{\mathcal E}\quad(n\geq0)
\]
constitute a $p$-divisible group $\mathcal E[p^\infty]$ of
rank $p^2$. In particular, if $F_{\mathcal E}$ is the formal
group law associated to the local parameter $t_{\mathcal E}$
associated with a Weierstra\ss{} equation for $\mathcal E$,
we have
\begin{lem}\label{lem:EllCur-DivGps}
If $\mathcal E$ is a supersingular elliptic curve defined over
$\k$, there is a compatible family of isomorphisms of group
schemes over $\k$,
\[
\mathcal E[p^n]\iso\ker[p^n]_{F_{\mathcal E}}
\quad(n\geq0),
\]
and hence there is an isomorphism of divisible groups
\[
\mathcal E[p^\infty]\iso\ker[p^\infty]_{F_{\mathcal E}}.
\]
\end{lem}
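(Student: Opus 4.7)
The plan is to leverage the supersingularity hypothesis to show that the entire $p^n$-torsion of $\mathcal E$ is captured by the formal group at the identity. First I would pick a Weierstra\ss{} realization of $\mathcal E$ and recall that the formal completion of $\mathcal E$ along $\mathrm O$ has coordinate ring $\k[[t_{\UB{\mathcal E}}]]$, and that pulling back the group law along $t_{\UB{\mathcal E}}$ recovers the formal group law $F_{\UB{\mathcal E}}$ by the construction in Section~\ref{sec:EllCurFinFlds}. The restriction of the isogeny $[p^n]_{\mathcal E}$ to this formal neighborhood is therefore the $p^n$-series $[p^n]_{F_{\UB{\mathcal E}}}(t_{\UB{\mathcal E}})$, and consequently $\ker[p^n]_{F_{\UB{\mathcal E}}} = \Spec(\k[[t_{\UB{\mathcal E}}]]/([p^n]_{F_{\UB{\mathcal E}}}(t_{\UB{\mathcal E}})))$ sits canonically as a closed subgroup scheme of $\mathcal E$ contained inside $\mathcal E[p^n]$.

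The crucial step is to promote this closed immersion to an equality, which I would do by combining a locality argument with a rank count. Set-theoretically, the supersingularity of $\mathcal E$ forces $\mathcal E[p^n](\Fpi) = 0$, so $\mathcal E[p^n]$ is concentrated at the origin and is therefore a local (infinitesimal) group scheme lying entirely inside the formal neighborhood of $\mathrm O$. Numerically, $\mathcal E[p^n]$ is a finite flat group scheme of rank $p^{2n}$, this being the degree of the isogeny $[p^n]_{\mathcal E}$; on the other hand, supersingularity of $\mathcal E$ is equivalent to $F_{\UB{\mathcal E}}$ having height~$2$, so by Equation~\eqref{eqn:p-n-series} the kernel $\ker[p^n]_{F_{\UB{\mathcal E}}}$ also has rank $p^{2n}$. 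A closed immersion of finite flat group schemes of equal rank over a field is an isomorphism, finishing the case of fixed $n$.

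Compatibility under varying $n$ is then formal: the transition maps $\mathcal E[p^n] \hookrightarrow \mathcal E[p^{n+1}]$ match, on the formal side, the inclusions arising from the factorization $[p^{n+1}]_{F_{\UB{\mathcal E}}} = [p]_{F_{\UB{\mathcal E}}}\o[p^n]_{F_{\UB{\mathcal E}}}$, and passing to the inverse limit yields the promised isomorphism of $p$-divisible groups. The main obstacle I anticipate is pinning down the locality step cleanly: it rests on the classical equivalence between supersingularity (in the guise $\Hasse(\UB{\mathcal E}) = 0$) and the vanishing of \'etale $p$-torsion, equivalently the statement that $F_{\UB{\mathcal E}}$ has height equal to the rank of the $p$-divisible group $\mathcal E[p^\infty]$; I would invoke the standard treatments in~\cite{Sil,Husemoller} for this foundational input.
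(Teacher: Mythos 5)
Your argument is correct, but it is organized differently from the paper's. The paper constructs the identification quite explicitly: starting from a Weierstra\ss{} model, it writes $y=-1/w(t)$ for a power series $w(t)\equiv t^3\modp{t^4}$ in the local parameter, defines an explicit $\k$-algebra map from $\k[[t]]/([p^n]_{F_{\mathcal E}}(t))$ into the $p^n$-torsion over the artinian ring $\k[[t]]/(t^{p^{nh}})$ using the assignment $t\mapsto(t/w(t),-2/w(t))$, and then invokes Silverman's Proposition~VII.2.2 (about formal groups over complete local rings) to upgrade this map to an isomorphism. You instead proceed by the closed-immersion/rank-count route: $\ker[p^n]_{F_{\mathcal E}}$ sits inside $\mathcal E[p^n]$ as the identity component, supersingularity forces $\mathcal E[p^n]$ to be local (no \'etale part), and then either the locality alone or the rank comparison $p^{nh}=p^{2n}$ forces equality. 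Both proofs draw on the same foundational inputs from~\cite{Sil} (the equivalence between the vanishing of the Hasse invariant, triviality of geometric $p$-torsion, and height~$2$), but yours substitutes an abstract dimension count for the paper's explicit map-construction, which makes it cleaner to write down though somewhat less constructive. Note that once you have established that $\mathcal E[p^n]$ is local, the rank count is in fact redundant --- a local finite group scheme is equal to its identity component, which is $\ker[p^n]_{F_{\mathcal E}}$ --- so your two ingredients reinforce rather than complement each other; either one alone suffices to close the argument.
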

\begin{proof}
This is essentially proved in
Silverman~\cite[Chapter VII Proposition 2.2]{Sil}; see also
Katz \& Mazur~\cite[Theorem 2.3.2]{Katz-Mazur}. If $\mathcal E$
is given by a Weierstra\ss{} Equation \eqref{eqn:WeierCub},
then in terms of the local parameter $t=-x/y$, $y$ can be
expressed in the form $y=-1/w(t)$ for some power series
$w(t)\in\k[[t]]$ satisfying
\[
w(t)\equiv t^3\modp{t^4}.
\]
By Equation~\eqref{eqn:p-n-series}, the assignment
\[
t\longmapsto\left(\frac{t}{w(t)},\frac{-2}{w(t)}\right)
\]
extends to a $\k$-algebra homomorphism
\[
\k[[t]]/([p^n]_{F_{\mathcal E}}(t))
\lra
\mathcal E(\k[[t]]/(t^{p^{nh}})
\]
where $h$ is the height of $F_{\mathcal E}$, known to be
$1$ when $\mathcal E$ is an ordinary curve or $2$ when it
is supersingular. This induces a homomorphism of $\k$-schemes
\[
\mathcal E(\k[[t]]/(t^{p^{nh}})[p^n]
\iso
\ker[p^n]_{F_{\mathcal E}}
\]
and Silverman's argument applied to the complete local ring
$R=\k[[t]]/(t^{p^{nh}})$ shows this to be an isomorphism.

An alternative approach to proving this makes use of the Serre-Tate
theory described in Katz~\cite[Theorem 1.2.1]{Katz:Serre-Tate},
together with Silverman~\cite[Chapter~VII Proposition 2.2]{Sil}.
\end{proof}

We can now define the \emph{Tate module} of the supersingular
elliptic curve $\mathcal E$ to be
\[
\TM_p\mathcal E=\mathrm M(\mathcal E[p^\infty])
\iso
\mathrm M(\ker[p^\infty]_{F_{\mathcal E}}).
\]
\begin{prop}\label{prop:TM-EllCur}
The Tate module $\TM_p\mathcal E$ is a free topological
$\W(\k)$-module of rank $2$.
\end{prop}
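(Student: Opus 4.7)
The plan is to assemble the proposition from the three structural inputs already in place: Lemma \ref{lem:EllCur-DivGps} (identifying $\mathcal E[p^\infty]$ with the formal-group kernel), the height computation for the formal group law of a supersingular curve, and the contravariant Dieudonné equivalence of Theorem \ref{thm:DivGp-TateMod}.

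First I would verify that $\mathcal E[p^\infty]$ is a $p$-divisible group and compute its rank. Lemma \ref{lem:EllCur-DivGps} gives a compatible family of isomorphisms $\mathcal E[p^n]\iso\ker[p^n]_{F_{\mathcal E}}$, so it suffices to analyse the formal kernels. Since $\mathcal E$ is supersingular, $F_{\mathcal E}$ has height $h=2$, so Equation \eqref{eqn:p-n-series} gives
\[
[p^n]_{F_{\mathcal E}}(X)\equiv uX^{p^{2n}}\modp{X^{p^{2n}+1}}
\]
with $u\in\k^\times$. Hence $\ker[p^n]_{F_{\mathcal E}}=\Spec\bigl(\k[[X]]/([p^n]_{F_{\mathcal E}}(X))\bigr)$ is a finite $\k$-group scheme of rank $p^{2n}$, and the connecting epimorphisms $\ker[p^{n+1}]_{F_{\mathcal E}}\twoheadrightarrow\ker[p]_{F_{\mathcal E}}$ coming from $[p]_{F_{\mathcal E}}$ make the family $\{\mathcal E[p^n]\}_{n\ge0}$ into a $p$-divisible group of rank $p^2$ in the sense used just before Theorem \ref{thm:DivGp-TateMod}.

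Next I would apply Theorem \ref{thm:DivGp-TateMod} directly: with $s=2$, it yields that
\[
\TM_p\mathcal E=\mathrm M(\mathcal E[p^\infty])
\]
is a free $\W(\k)$-module of rank $2$. The topological structure is recovered from the definition of $\mathrm M$ on $p$-divisible groups as the inverse limit $\mathrm M(\mathcal E[p^\infty])=\invlim{n}\mathrm M(\mathcal E[p^n])$; each $\mathrm M(\mathcal E[p^n])$ has $\W(\k)$-length $2n$ by Theorem \ref{thm:FGL-TateMod}, and these finite-length quotients furnish the profinite topology on $\TM_p\mathcal E$ compatible with the $\W(\k)$-module structure.

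Essentially no step is difficult given the preparation: the only thing to be careful about is the height computation, which is where supersingularity is used — if $\mathcal E$ were ordinary one would get height $1$ and hence a rank $1$ (rather than $2$) Tate module, and the contribution of the étale part would need to be treated separately. Here the purely formal nature of $\mathcal E[p^\infty]$ in the supersingular case is what makes Lemma \ref{lem:EllCur-DivGps} clean and lets the Dieudonné equivalence deliver the statement immediately.
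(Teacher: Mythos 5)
Your argument is correct and is exactly the one the paper implicitly intends: it records beforehand that $\mathcal E[p^\infty]$ is a $p$-divisible group of rank $p^2$ (which you rederive via the height-$2$ estimate from Equation~\eqref{eqn:p-n-series} and Lemma~\ref{lem:EllCur-DivGps}) and then regards the proposition as an immediate application of Theorem~\ref{thm:DivGp-TateMod} with $s=2$. The paper gives no separate proof, so your fleshed-out version matches both the route and the intended level of detail.
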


In its strongest form, the following result from~\cite{Waterhouse}
is due to J.~Tate, although never formally published by him; weaker
variants were established earlier by Weil and others; a proof
appears in~\cite{WW-JM}.
\begin{thm}\label{thm:Tate}
Let $\mathcal E$ and $\mathcal E'$ be elliptic curves over $\F_{p^d}$.
Then the natural map
\[
\Hom_{\F_{p^d}}(\mathcal E,\mathcal E')
\lra
\Hom_{\D_{\F_{p^d}}}(\TM_p\mathcal E',\TM_p\mathcal E)
\]
is injective and the induced map
\[
\Hom_{\F_{p^d}}(\mathcal E,\mathcal E')\otimes\Z_p
\lra
\Hom_{\D_{\F_{p^d}}}(\TM_p\mathcal E',\TM_p\mathcal E)
\]
is an isomorphism.
\end{thm}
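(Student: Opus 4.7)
The plan is to adapt Tate's original isogeny argument for abelian varieties over finite fields to the present setting, where the Galois module $\TM_\ell \mathcal E$ ($\ell \neq p$) is replaced by the Dieudonn\'e module $\TM_p \mathcal E = \mathrm M(\mathcal E[p^\infty])$. The proof splits into the injectivity of the first map and the surjectivity-plus-injectivity of its $\Z_p$-linear extension; injectivity for both follows from the same argument, while the surjectivity is the substantial content.

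For the injectivity of the first map, suppose $\phi \in \Hom_{\F_{p^d}}(\mathcal E,\mathcal E')$ induces the zero homomorphism $\TM_p\mathcal E' \lra \TM_p\mathcal E$. By the anti-equivalence of Theorem~\ref{thm:DivGp-TateMod}, the associated morphism of $p$-divisible groups $\phi[p^\infty]\:\mathcal E[p^\infty] \lra \mathcal E'[p^\infty]$ is zero, so $\ker\phi$ contains $\mathcal E[p^\infty]$ as a subgroup scheme. Any nonzero morphism of elliptic curves is an isogeny with finite kernel, which forces $\phi=0$. Since $\Hom_{\F_{p^d}}(\mathcal E,\mathcal E')$ is a finitely generated, torsion-free abelian group (of rank at most four) and $\Z_p$ is $\Z$-flat, injectivity of the second map is automatic.

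For surjectivity of the second map, the strategy is to show that the image is $p$-adically saturated with full rank. Let $K = \Frac \W(\F_{p^d})$ and form the rational Dieudonn\'e modules $V_p\mathcal E = \TM_p\mathcal E \otimes_{\W(\F_{p^d})} K$ and $V_p\mathcal E'$; one first establishes that every $\D_{\F_{p^d}} \otimes K$-linear map $V_p\mathcal E' \lra V_p\mathcal E$ arises from an element of $\Hom_{\F_{p^d}}(\mathcal E,\mathcal E') \otimes \Q$. The crucial input is Tate's finiteness theorem: up to $\F_{p^d}$-isomorphism, only finitely many elliptic curves lie in the $\F_{p^d}$-isogeny class of $\mathcal E$. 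Using this, given a prescribed $\D \otimes K$-linear map $f$, one considers the infinite family of sub-$p$-divisible groups of $\mathcal E[p^\infty]$ corresponding to $\D$-stable sublattices whose existence follows from $f$; Tate's finiteness forces two of the resulting quotient elliptic curves to be isomorphic over $\F_{p^d}$, and comparing the two quotient isogenies produces an actual morphism in $\Hom(\mathcal E,\mathcal E') \otimes \Q$ inducing $f$. Passage back from the rational to the integral level, combined with saturation of $\Hom \otimes \Z_p$ inside $\Hom_{\D}(\TM_p\mathcal E',\TM_p\mathcal E)$ (a consequence of the Dieudonn\'e correspondence between sub-$p$-divisible groups and appropriately saturated submodules), yields the isomorphism on $\Z_p$.

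The main obstacle is precisely Tate's finiteness theorem for isogeny classes; this is a deep arithmetic input resting on control of polarizations and Northcott-type finiteness for heights, and it is the only non-formal ingredient. Once it is granted, the rest of the argument is a pigeon-hole/compactness argument on $\D$-stable lattices in $V_p\mathcal E$, together with bookkeeping to match the Dieudonn\'e-module picture with the $p$-divisible group picture via Lemma~\ref{lem:EllCur-DivGps} and Theorem~\ref{thm:DivGp-TateMod}.
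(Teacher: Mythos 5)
The paper does not actually prove this theorem: the paragraph immediately preceding the statement explicitly attributes the result to Tate, notes that it appears in Waterhouse~\cite{Waterhouse}, and points to Waterhouse--Milne~\cite{WW-JM} for a proof, so there is no internal argument to compare your sketch against. Your reconstruction is essentially Tate's original strategy transported to the Dieudonn\'e-module setting, and the three moves you identify --- faithfulness of the Dieudonn\'e anti-equivalence giving injectivity of the first map, $\Z$-flatness of $\Z_p$ and finite generation of $\Hom$ extending it, and the finiteness-of-isogeny-classes pigeonhole on $\D$-stable lattices giving surjectivity and $p$-adic saturation --- are the right ones and match the structure of the proof in the cited references.

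Two remarks on the details. First, for elliptic curves over a fixed finite field the finiteness of the $\F_{p^d}$-isogeny class is immediate (there are only finitely many Weierstra\ss{} cubics over $\F_{p^d}$, hence finitely many isomorphism classes of elliptic curves at all), so the appeal to polarizations and Northcott-type height finiteness is unnecessary here; that machinery is what Tate needs in the general abelian-variety case but is overkill in dimension one. Second, the central step of your surjectivity argument --- ``Tate's finiteness forces two of the resulting quotient elliptic curves to be isomorphic, and comparing the two quotient isogenies produces an actual morphism inducing $f$'' --- compresses the part of the proof that genuinely requires work. In the standard treatment one works with the graph of $f$ inside $\TM_p(\mathcal E\times\mathcal E')$, interprets a descending chain of $\D$-stable saturations of that graph as a sequence of $p$-power isogenies out of $\mathcal E\times\mathcal E'$, uses finiteness to find two isomorphic targets, and extracts the desired element of $\Hom(\mathcal E,\mathcal E')\otimes\Q_p$ as the off-diagonal component of the resulting endomorphism of $\mathcal E\times\mathcal E'$; as phrased your sketch would not let a reader reconstruct that bookkeeping. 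These are presentational rather than logical gaps: the proposal is sound in outline and agrees with the external proof the paper delegates to.
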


Since $\Hom_{\F_{p^d}}(\mathcal E,\mathcal E')$ is a free
abelian group of finite rank,
$\Hom_{\F_{p^d}}(\mathcal E,\mathcal E')\otimes\Z_p$ agrees
with its $p$-adic completion
$\Hom_{\F_{p^d}}(\mathcal E,\mathcal E')\sphat_p$. This
finiteness also implies that for sufficiently large~$d$,
\[
\Hom_{\F_{p^d}}(\mathcal E,\mathcal E')
=\Hom_{\Fpi}(\mathcal E,\mathcal E').
\]
So interpreting $\F_{p^\infty}$ as $\Fpi$, Theorem~\ref{thm:Tate}
also holds in that case.

The above definition of $\TM_p\mathcal E$ is different in essence
from that of the Tate modules
\[
\TM_\ell\mathcal E=\invlim{n}\mathcal E[\ell^n]
\]
for primes $\ell\neq p$. However, for any $\k$-algebra $S$, we
may follow Fontaine~\cite[Chapitre V]{Fontaine} and consider
\[
\TM'_p\mathcal E(S)=
\Hom_{\Z_p}(\Q_p/\Z_p,\mathcal E[p^\infty](S)).
\]
In his notation and terminology, Fontaine shows that the functor
$\TM'_p\mathcal E$ satisfies
\[
\TM'_p\mathcal E(S)=
\Hom^{\mathrm{cont}}_{\D_\k}
(\Q_p/\Z_p\oTimes_{\Z_p}\TM_p\mathcal E[p^\infty](S),
\CW_\k(S)).
\]
{}From this it can be deduced that the case $\ell=p$ of
the following result holds, the case where $\ell\neq p$
being dealt with in~\cite{Sil,Husemoller}.
\begin{thm}\label{thm:Tate2}
Let $\mathcal E$ and $\mathcal E'$ be elliptic curves over
$\F_{p^d}$ and for a prime $\ell$ let
\[
\TM'_\ell\mathcal E=
\begin{cases}
\invlim{n}\mathcal E[\ell^n]&\text{if $\ell\neq p$}, \\
\TM'_p\mathcal E&\text{if $\ell=p$}.
\end{cases}
\]
Then the natural map
\[
\Hom_{\F_{p^d}}(\mathcal E,\mathcal E')
\lra
\Hom_{\Gal(\Fpi/\F_{p^d})}(\TM'_\ell\mathcal E,\TM'_\ell\mathcal E')
\]
is injective and the induced map
\[
\Hom_{\F_{p^d}}(\mathcal E,\mathcal E')\otimes\Z_\ell
\lra
\Hom_{\Gal(\Fpi/\F_{p^d})}(\TM'_\ell\mathcal E,\TM'_\ell\mathcal E')
\]
is an isomorphism.
\end{thm}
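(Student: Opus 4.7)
The plan is to split into the cases $\ell\neq p$ and $\ell=p$, reducing each to an already known result. When $\ell\neq p$, the statement is the classical $\ell$-adic Tate theorem for elliptic curves over a finite field, proved via the Galois action on $\invlim{n}\mathcal E[\ell^n]$; I would simply quote the argument of~\cite{Sil,Husemoller}, noting that for $\ell$ coprime to $p$ the functor $\TM'_\ell\mathcal E$ agrees with the usual $\ell$-adic Tate module, so both the injectivity and the $\Z_\ell$-linear isomorphism follow verbatim.

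The substantial case is $\ell=p$, where the strategy is to rewrite the target so that Theorem~\ref{thm:Tate} can be invoked. Starting from the Fontaine duality formula
\[
\TM'_p\mathcal E(S)
=\Hom^{\mathrm{cont}}_{\D_\k}\bigl(\Q_p/\Z_p\oTimes_{\Z_p}\TM_p\mathcal E[p^\infty](S),\CW_\k(S)\bigr)
\]
quoted immediately before the theorem, combined with the Galois-equivariance of the Dieudonn\'e correspondence of Theorem~\ref{thm:DivGp-TateMod}, I would construct a natural isomorphism
\[
\Hom_{\Gal(\Fpi/\F_{p^d})}(\TM'_p\mathcal E,\TM'_p\mathcal E')
\iso
\Hom_{\D_{\F_{p^d}}}(\TM_p\mathcal E',\TM_p\mathcal E),
\]
the variance being reversed because Fontaine's construction is contravariant. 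Once this identification is established, Theorem~\ref{thm:Tate} immediately yields both the injectivity on $\Hom_{\F_{p^d}}(\mathcal E,\mathcal E')$ and the isomorphism after tensoring with $\Z_p$, and a short diagram chase confirms that the composite natural map agrees with the one appearing in the statement.

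The main obstacle will be the bookkeeping around Fontaine's duality: one must verify that the evaluation pairing between $\TM'_p\mathcal E$ and $\TM_p\mathcal E$ is simultaneously Galois- and Dieudonn\'e-equivariant and natural in $\mathcal E$, so that $\Hom$--$\otimes$ adjunction yields an isomorphism of $\Hom$-sets with the correct functoriality rather than merely an abstract bijection. This is essentially a matter of carefully unpacking the definitions from~\cite[Chapitre V]{Fontaine} in the elliptic curve setting, for which the explicit description of the $p$-divisible group $\mathcal E[p^\infty]$ via the formal group law $F_{\mathcal E}$ of a Weierstra\ss{} realization (as in Lemma~\ref{lem:EllCur-DivGps} for the supersingular subcase, with the ordinary case handled similarly) keeps the calculation concrete.
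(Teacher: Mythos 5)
Your proposal matches the paper's approach exactly: the paper likewise dispatches $\ell\neq p$ by citing the classical Tate theorem in~\cite{Sil,Husemoller}, and for $\ell=p$ it uses the quoted Fontaine formula to translate $\Hom_{\Gal(\Fpi/\F_{p^d})}(\TM'_p\mathcal E,\TM'_p\mathcal E')$ into the Dieudonn\'e-module $\Hom$ of Theorem~\ref{thm:Tate}, from which the injectivity and the $\Z_p$-isomorphism follow. Your extra remarks about checking Galois- and Dieudonn\'e-equivariance of Fontaine's duality are exactly the bookkeeping the paper leaves implicit, so this is a correct elaboration of the same argument rather than a different route.
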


If $\mathcal E$ is a supersingular elliptic curve defined over $\Fpi$,
its absolute endomorphism ring $\End\mathcal E=\End_{\Fpi}\mathcal E$
is a maximal order in a quaternion division algebra over $\Q$. On passing
to the $p$-adic completion of $\End\mathcal E$, we obtain a non-commutative
$\W(\F_{p^2})$-algebra of rank~$2$,
\[
\mathcal O_{\mathcal E}=\End\mathcal E\otimes\Z_p.
\]
\begin{prop}\label{prop:EndEpCompletion}
The division algebra $\End\mathcal E\otimes\Q$ is unramified except at~$p$
and $\infty$.

If $\mathcal E$ is defined over $\F_{p^d}$, then as a $\W(\F_{p^2})$-algebra,
the $p$-adic completion $\mathcal O_{\mathcal E}$ is given by
\[
\mathcal O_{\mathcal E}=\W(\F_{p^2})\<\Fr^{(d)}\>,
\]
where $\Fr^{(d)}$ is the relative Frobenius map
$\Fr^{(d)}\:\mathcal E\lra\mathcal E^{(p^d)}$ which satisfies the
relations
\[
\begin{cases}
{\Fr^{(d)}}^2=up^d
&\text{with $u$ a unit in $\W(\F_{p^2})$},\\
\Fr^{(d)}\alpha=\alpha^{(p^d)}\Fr^{(d)}&
\text{for all $\alpha\in\W(\F_{p^2})$}.
\end{cases}
\]
When $d=1$, $\mathcal O_{\mathcal E}=\W(\F_{p^2})\<\rmS\>$ is also
isomorphic to the $\W(\F_{p^2})$-algebra $\D_{\F_{p^2}}$ with $\rmS$
corresponding to the Frobenius element $\rmF$ and agreeing with $\Fr$
up to a unit in $\W(\F_{p^2})$.
\end{prop}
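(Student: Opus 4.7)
The strategy is to invoke Tate's theorem (Theorem \ref{thm:Tate}) to reduce the computation of $\mathcal O_{\mathcal E}=\End\mathcal E\otimes\Z_p$ to a Dieudonné-theoretic problem, then use the supersingular Frobenius structure (Lemma \ref{lem:EllCur-DivGps} and Proposition \ref{prop:Fr2-p}) to produce the presentation $\W(\F_{p^2})\<\Fr^{(d)}\>$. For the first assertion (quaternion division algebra ramified only at $p$ and $\infty$), I would combine this local analysis at $p$ with the $\ell\neq p$ version of Tate's theorem (Theorem \ref{thm:Tate2}) and invoke the Hasse--Brauer--Noether local--global principle.

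For the main local computation, by Proposition \ref{prop:TM-EllCur} the Tate module $M=\TM_p\mathcal E$ is free of rank $2$ over $\W(\F_{p^d})$, and via Lemma \ref{lem:EllCur-DivGps} it is identified with the Dieudonné module of the height-$2$ formal group $F_{\mathcal E}$. Proposition \ref{prop:Fr2-p} gives $\Fr^2=\lambda\circ[p]_{\mathcal E}$ for some isomorphism $\lambda$; iterating $d$ times yields ${\Fr^{(d)}}^2=u\,p^d$ with $u$ an automorphism of $\mathcal E$. Since $u$ acts on the formal group as a scalar automorphism, and since every supersingular $j$-invariant lies in $\F_{p^2}$ so that $F_{\mathcal E}$ is essentially defined over $\F_{p^2}$ (Proposition \ref{prop:FGL-0-GalInce}), I expect $u\in\W(\F_{p^2})^\times\subset\mathcal O_{\mathcal E}^\times$. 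The commutation relation $\Fr^{(d)}\alpha=\alpha^{(p^d)}\Fr^{(d)}$ is the $d$-th iterate of the basic $\W$-semilinearity $\rmF a=a^{(p)}\rmF$ encoded in $\D_\k$, transported across the identification of $\mathcal O_{\mathcal E}$ with an endomorphism algebra of $M$.

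To conclude the presentation $\mathcal O_{\mathcal E}=\W(\F_{p^2})\<\Fr^{(d)}\>$, I would argue by a $\Z_p$-rank comparison: the right-hand side has $\W(\F_{p^2})$-rank $2$ and hence $\Z_p$-rank $4$, which matches the $\Z_p$-rank of
\[
\End_{\D_{\F_{p^d}}} M
\]
(the commutant of the Frobenius on a rank-$2$ free module), so the natural inclusion produced by Tate's theorem is an isomorphism. For $d=1$, setting $\rmF:=\Fr$ and $\rmV:=u^{-1}p\,\Fr^{-1}$ produces two elements of $\mathcal O_{\mathcal E}$ satisfying $\rmF\rmV=\rmV\rmF=p$, $\rmF a=a^{(p)}\rmF$, $a\rmV=\rmV a^{(p)}$, which is exactly the presentation of $\D_{\F_{p^2}}$.

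For the global statement, the local algebra $\mathcal O_{\mathcal E}\otimes_{\Z_p}\Q_p$ is the unique ramified quaternion algebra over $\Q_p$ by inspection of the presentation (unramified quadratic extension adjoined to a uniformizer of square a unit times $p$); at primes $\ell\neq p$, Theorem \ref{thm:Tate2} identifies $\End\mathcal E\otimes\Q_\ell$ with the endomorphism algebra of the $2$-dimensional Galois representation $\TM'_\ell\mathcal E\otimes\Q_\ell$, which is split, giving trivial local invariant. Combined with the well-known fact that $\End\mathcal E\otimes\Q$ is a $4$-dimensional division algebra over $\Q$ (Deuring), Hasse--Brauer--Noether forces ramification at $\infty$ as well. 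The step I expect to be the main obstacle is the precise justification that $u\in\W(\F_{p^2})^\times$ rather than merely in $\W(\F_{p^d})^\times$; this requires unpacking the automorphism produced by Proposition \ref{prop:Fr2-p} and identifying its action on $M$ with a scalar coming from endomorphisms of the formal group, which leans on the supersingular rationality in Proposition \ref{prop:FGL-0-GalInce}.
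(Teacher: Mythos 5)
The paper's own ``proof'' of this proposition is simply a citation to Waterhouse (Chapters 2 and 4), so your from-scratch construction via Tate's theorem, Dieudonn\'e modules and the Frobenius factorization of Proposition~\ref{prop:Fr2-p} is a genuinely different route. Building everything out of the paper's own internal machinery is a reasonable idea and much of the outline is sound: identifying $\mathcal O_{\mathcal E}$ with $\End_{\D_{\F_{p^d}}}(\TM_p\mathcal E)$ via Theorem~\ref{thm:Tate} and Lemma~\ref{lem:EllCur-DivGps}, extracting the relation ${\Fr^{(d)}}^2=up^d$ from Proposition~\ref{prop:Fr2-p}, using Theorem~\ref{thm:Tate2} at $\ell\neq p$ to get splitting away from $p$, and then Hasse--Brauer--Noether for the global ramification statement. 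You also correctly flag that identifying $u$ as lying in $\W(\F_{p^2})^\times$ is a nontrivial step.

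However, there is a real gap in the concluding step. You argue that because $\W(\F_{p^2})\langle\Fr^{(d)}\rangle$ and $\mathcal O_{\mathcal E}$ both have $\Z_p$-rank $4$, the natural inclusion is an isomorphism. But a $\Z_p$-sublattice of the same rank can have finite index $>1$, so rank comparison alone does not give surjectivity. The correct argument is via maximality: $\mathcal O_{\mathcal E}$ is the \emph{unique} maximal order in the ramified quaternion algebra $\End\mathcal E\otimes\Q_p$, and one must check that the subring $\W(\F_{p^2})+\W(\F_{p^2})\Fr^{(d)}$ is itself a maximal order. For $d=1$ this holds because $\Fr$ is a uniformizer (valuation $1$), so $\W(\F_{p^2})\langle\Fr\rangle$ is the valuation ring. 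But note that for general $d$ this fails: $\Fr^{(d)}$ has valuation $d$ in the division algebra, so for $d$ odd and $\geq 3$ the subring $\W(\F_{p^2})+\W(\F_{p^2})\Fr^{(d)}$ has index $p^{d-1}$ in the maximal order, and for $d$ even $\Fr^{(d)}$ commutes with $\W(\F_{p^2})$ and the generated algebra collapses to $\W(\F_{p^2})$. So the rank-matching step is not merely missing a justification; as written it would lead to an incorrect conclusion for $d>1$. A careful proof should either restrict attention to $d=1$ (which suffices for the final sentence of the proposition and for the local structure of $\mathcal O_{\mathcal E}$), or replace $\Fr^{(d)}$ by a suitable uniformizer of $\mathcal O_{\mathcal E}$ constructed from it and $p$.
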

\begin{proof}
See~\cite[Chapters 2 \& 4]{Waterhouse}.
\end{proof}
Notice that $\mathcal O_{\mathcal E}$ has a natural $p$-adic topology
extending that of $\Z_p$. Moreover, every element
$\alpha\in\mathcal O_{\mathcal E}$ has a unique Teichm\"uller expansion
\begin{equation}\label{eqn:Teichmueller}
\alpha=\alpha_0+\alpha_1S
\quad
(\alpha_0\in\W(\F_{p^2}),\,\alpha_0^{p^2}=\alpha_0).
\end{equation}

As consequence of Proposition~\ref{prop:EndEpCompletion}, the formal
group law $F_{\UB{\mathcal E}}$ becomes a \emph{formal $\W(\F_{p^2})$-module}
as defined in Hazewinkel~\cite{Haz}. We set
\[
\End F_{\UB{\mathcal E}}=\End_{\Fpi}F_{\UB{\mathcal E}}.
\]
\begin{prop}\label{prop:EllFGL-WFq-module}
The natural homomorphism $\End\mathcal E\lra\End F_{\UB{\mathcal E}}$
extends to an isomorphism of $\W(\F_{p^2})$-algebras
$\mathcal O_{\mathcal E}\lra\End F_{\UB{\mathcal E}}$.
\end{prop}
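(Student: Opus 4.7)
The plan is to reduce both sides of the map to the endomorphism ring of the same Dieudonn\'e module, exploiting Tate's Theorem~\ref{thm:Tate} on the elliptic curve side and the identification $\mathcal E[p^\infty]\iso\ker[p^\infty]_{F_{\UB{\mathcal E}}}$ from Lemma~\ref{lem:EllCur-DivGps} on the formal group side. The natural map $\End\mathcal E\lra\End F_{\UB{\mathcal E}}$ is induced by restricting an isogeny of $\mathcal E$ to its action on the formal neighborhood of the origin. Since $\mathcal E$ is supersingular, $F_{\UB{\mathcal E}}$ has height~$2$, so $\End F_{\UB{\mathcal E}}$ is the maximal order in the quaternion division algebra over $\Q_p$, hence automatically $p$-adically complete. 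Therefore the map extends by continuity through $\mathcal O_{\mathcal E}=\End\mathcal E\otimes\Z_p$, giving a canonical $\W(\F_{p^2})$-algebra homomorphism $\mathcal O_{\mathcal E}\lra\End F_{\UB{\mathcal E}}$, whose bijectivity we must establish.

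To prove bijectivity, I would choose $d$ large enough that $\mathcal E$ and all its endomorphisms are defined over $\F_{p^d}$; such a $d$ exists by the finiteness of $\End\mathcal E$ noted after Theorem~\ref{thm:Tate}. Tate's Theorem then yields
\[
\mathcal O_{\mathcal E}\iso\End_{\D_{\F_{p^d}}}\TM_p\mathcal E,
\]
and Lemma~\ref{lem:EllCur-DivGps} identifies the $p$-divisible groups $\mathcal E[p^\infty]$ and $\ker[p^\infty]_{F_{\UB{\mathcal E}}}$, hence their Dieudonn\'e modules, so that $\TM_p\mathcal E\iso\mathrm M(\ker[p^\infty]_{F_{\UB{\mathcal E}}})$. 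On the formal side, the anti-equivalence of Theorem~\ref{thm:DivGp-TateMod} applied to the connected height-$2$ $p$-divisible group $\ker[p^\infty]_{F_{\UB{\mathcal E}}}$ supplies
\[
\End F_{\UB{\mathcal E}}\iso\End_{\D_{\F_{p^d}}}\mathrm M(\ker[p^\infty]_{F_{\UB{\mathcal E}}}),
\]
where one uses that a $1$-dimensional formal group law over $\k$ can be recovered from its system of subgroups $\ker[p^n]_F$, so endomorphisms of the formal group and of its associated $p$-divisible group agree. Composing the three canonical isomorphisms exhibits our map as an isomorphism, and matching of the $\W(\F_{p^2})$-action and of the Frobenius element described in Proposition~\ref{prop:EndEpCompletion} with their Dieudonn\'e-module counterparts is a routine check.

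The main obstacle is legitimately identifying $\End F_{\UB{\mathcal E}}$ with the $\D_{\F_{p^d}}$-linear endomorphism ring of $\mathrm M(\ker[p^\infty]_{F_{\UB{\mathcal E}}})$ rather than merely with its endomorphism ring as a topological $\W(\k)$-module; one must confirm that the $\rmF$- and $\rmV$-actions on the Dieudonn\'e module are exactly the Frobenius and Verschiebung coming from the formal group, and that every $\D_{\F_{p^d}}$-linear endomorphism of the Dieudonn\'e module arises from a genuine endomorphism of $F_{\UB{\mathcal E}}$. This is a direct consequence of the contravariant equivalence of Theorem~\ref{thm:DivGp-TateMod}, but it is worth being explicit about the preservation of the Teichm\"uller decomposition~\eqref{eqn:Teichmueller}, so that the resulting bijection is genuinely one of $\W(\F_{p^2})$-algebras and not merely of abstract rings.
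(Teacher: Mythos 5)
Your proposal is correct and follows essentially the same route the paper sketches: the paper's proof consists of one sentence invoking Lemma~\ref{lem:EllCur-DivGps} and Tate's Theorem~\ref{thm:Tate} (with a pointer to Katz), and your argument is simply a careful unpacking of that sketch, routing both sides through $\End_{\D_{\F_{p^d}}}$ of the common Dieudonn\'e module via the anti-equivalence of Theorem~\ref{thm:DivGp-TateMod}. The points you flag explicitly --- choosing $d$ large enough, completeness of $\End F_{\UB{\mathcal E}}$ to justify the extension, and matching $\rmF$, $\rmV$ and the Teichm\"uller/$\W(\F_{p^2})$-structure --- are precisely the details the paper leaves implicit.
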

\begin{proof}
The extension to a map on the $p$-adic completion is straightforward,
and the fact that the resulting map is an isomorphism uses Lemma~\ref{lem:EllCur-DivGps}
together with Tate's Theorem~\ref{thm:Tate}; see also Katz~\cite[\S IV]{Katz:Cartier}.
\end{proof}
\begin{cor}\label{cor:TateModuleFormalWFq}
$\TM_p\mathcal E$ is a module over the $\Z_p$-algebra
$(\W(\Fpi)\oTimes_{\Z_p}\W(\F_{p^2}))\<\rmS\>$ in which
\[
\rmS(\alpha\otimes\beta)
=\alpha^{(p)}\otimes\beta^{(p)}\rmS.
\]
\end{cor}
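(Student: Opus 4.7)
The plan is to equip $\TM_p\mathcal E$ with three compatible structures---the Dieudonn\'e module structure over $\W(\Fpi)$, the endomorphism action of $\mathcal O_{\mathcal E}$, and the Frobenius element $\rmS$ acting as a simultaneous twisting operator for both factors---and then to check the commutation rule on simple tensors.

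First, Theorem~\ref{thm:DivGp-TateMod} applied to $\mathcal E[p^\infty]\iso\ker[p^\infty]_{F_{\mathcal E}}$ (Lemma~\ref{lem:EllCur-DivGps}) makes $\TM_p\mathcal E$ a free $\W(\Fpi)$-module of rank~$2$ on which the Dieudonn\'e algebra $\D_{\Fpi}=\W(\Fpi)\<\rmF,\rmV\>$ acts; in particular, $\rmF a=a^{(p)}\rmF$ for $a\in\W(\Fpi)$. Next, Proposition~\ref{prop:EllFGL-WFq-module} supplies an action of $\mathcal O_{\mathcal E}=\W(\F_{p^2})\<\rmS\>$ on $F_{\UB{\mathcal E}}$ by endomorphisms; functoriality of $\mathrm M$ promotes this to an action on $\TM_p\mathcal E$ that is $\W(\Fpi)$-linear and commutes with $\rmF,\rmV$. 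Restricting to the commutative subring $\W(\F_{p^2})\subset\mathcal O_{\mathcal E}$ gives a $\W(\F_{p^2})$-action commuting with the $\W(\Fpi)$-scalar action, so the two combine into a natural module structure over $\W(\Fpi)\oTimes_{\Z_p}\W(\F_{p^2})$.

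Finally, I adjoin the operator $\rmS$, whose two required commutation rules come from distinct sources: by Proposition~\ref{prop:EndEpCompletion}, the isomorphism $\mathcal O_{\mathcal E}\iso\D_{\F_{p^2}}$ sends $\rmS$ to the Dieudonn\'e Frobenius, giving $\rmS\alpha=\alpha^{(p)}\rmS$ for $\alpha\in\W(\Fpi)$ via the $\D_{\Fpi}$-algebra relations, while $\rmS\beta=\beta^{(p)}\rmS$ for $\beta\in\W(\F_{p^2})$ is the intrinsic defining relation of $\mathcal O_{\mathcal E}$. Chaining these along a simple tensor and using that the $\W(\Fpi)$-scalar and $\W(\F_{p^2})$-endomorphism actions commute then yields the desired identity
\[
\rmS(\alpha\otimes\beta)=\alpha^{(p)}\otimes\beta^{(p)}\rmS.
\]

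The principal obstacle is ensuring that a single operator $\rmS$ realises \emph{both} Frobenius twists at once. The essential input that makes this possible is the height-$2$ condition on $F_{\UB{\mathcal E}}$, which (via Lemma~\ref{lem:EllCur-DivGps} and Tate's Theorem~\ref{thm:Tate}) forces the identification $\mathcal O_{\mathcal E}\iso\D_{\F_{p^2}}$ recorded in Proposition~\ref{prop:EndEpCompletion}; without this identification one would only have the two semilinear relations acting separately. The content of the corollary is precisely that they are compatibly packaged in the twisted algebra $(\W(\Fpi)\oTimes_{\Z_p}\W(\F_{p^2}))\<\rmS\>$.
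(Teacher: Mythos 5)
Your proof reaches an impasse at the final step, and the two halves of your argument are in fact mutually contradictory. In your second paragraph you correctly note that functoriality of $\mathrm M$ makes the $\mathcal O_{\mathcal E}$-action on $\TM_p\mathcal E$ one by $\D_{\Fpi}$-module endomorphisms, hence $\W(\Fpi)$-linear. In particular the operator $\mathrm M(\rmS)$ commutes with scalars from $\W(\Fpi)$. But in the third paragraph you then assert that this same operator satisfies $\rmS\alpha=\alpha^{(p)}\rmS$ for $\alpha\in\W(\Fpi)$, invoking ``the $\D_{\Fpi}$-algebra relations.'' Those relations concern the intrinsic Dieudonn\'e operator $\rmF\in\D_{\Fpi}$, which is $\sigma$-semilinear over $\W(\Fpi)$ by construction; they say nothing about $\mathrm M(\rmS)$, which you have just shown to be $\W(\Fpi)$-\emph{linear}. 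The identification $\mathcal O_{\mathcal E}\iso\D_{\F_{p^2}}$ from Proposition~\ref{prop:EndEpCompletion} is an abstract isomorphism of $\W(\F_{p^2})$-algebras; it names $\rmS$ after the Frobenius element of a Dieudonn\'e ring but does not make $\mathrm M(\rmS)$ inherit the $\sigma$-semilinearity of the genuinely different operator $\rmF\in\D_{\Fpi}$ acting on $\TM_p\mathcal E$.

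The operator that the corollary calls $\rmS$ has to be the Dieudonn\'e Frobenius $\rmF\in\D_{\Fpi}$ itself---that is where the $\alpha^{(p)}$ twist comes from for free---and the content that remains to be supplied is the second twist, namely that $\rmF$ intertwines the $\W(\F_{p^2})$-action (coming from $\W(\F_{p^2})\subset\End\mathcal E$) by $\beta\mapsto\beta^{(p)}$. This is exactly where the paper's proof leans on the explicit construction of $\rmF$ in Demazure (Chapter~III~\S5), applied to the formal $\W(\F_{p^2})$-module structure on $F_{\UB{\mathcal E}}$: unwinding that definition is what produces the $\beta^{(p)}$ twist, and it is not an ``intrinsic defining relation of $\mathcal O_{\mathcal E}$'' as you claim. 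That last phrase would only describe a relation between $\rmS$ and $\beta$ inside $\mathcal O_{\mathcal E}$, whereas what is needed is a commutation relation between $\rmF\in\D_{\Fpi}$ and the operator $\mathrm M(\beta)$ on $\TM_p\mathcal E$---a different assertion requiring the Demazure input. Without it, your argument establishes the module structure over $\W(\Fpi)\oTimes_{\Z_p}\W(\F_{p^2})$ but not the asserted twisted $\<\rmS\>$-extension.
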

\begin{proof}
Elements of $\W(\F_{p^2})\subset\End\mathcal E$ induce morphisms of
$\TM_p\mathcal E$. By definition of the Frobenius operation $\rmF$
in~\cite[Chapter III \S5]{Demazure}, we obtain the stated intertwining
formula.
\end{proof}

Using Corollary~\ref{cor:TateModuleFormalWFq}, we can deduce more on
the structure of $\TM_p\mathcal E$. Let $\Gamma=\Gal(\Fpi/\F_p)\iso\hat\Z$
and $\Eta=\Gal(\Fpi/\F_{p^2})$, hence $\Gamma/\Eta\iso\Z/2$.
By~\cite[Chapitre III Proposition 2.1]{Fontaine} extended in the obvious
way to the infinite dimensional situation, the multiplication map
\[
\W(\Fpi)\oTimes_{\W(\F_{p^2})}(\TM_p\mathcal E)^\Eta
\lra\TM_p\mathcal E
\]
is an isomorphism of $\W(\Fpi)$-modules. Indeed it is an isomorphism
of (topological) left $\Gamma$-modules and indeed of
$\W(\Fpi)\oTimes_{\Z_p}\W(\F_{p^2})$-modules. Moreover, viewed as a
module over the right hand factor of
\[
\W(\F_{p^2})\iso1\oTimes_{\Z_p}\W(\F_{p^2})
\subset\W(\Fpi)\oTimes_{\Z_p}\W(\F_{p^2})
\]
it is free of rank $2$. We can now deduce from this that for any pair
of supersingular elliptic curves $\mathcal E,\mathcal E'$,
$\Hom_{\D_{\F_{p^d}}}(\TM_p\mathcal E',\TM_p\mathcal E)$ is a free
module of rank~$1$ over $\W(\F_{p^2})\<\rmS\>$, hence it is a free module
of rank~$2$ over $\W(\F_{p^2})$.

The ring $\W(\F_{p^2})\<\rmS\>$ is familiar to topologists as the absolute
endomorphism ring of the universal Lubin-Tate formal group law height~$2$,
agreeing with that of the natural orientation in Morava $K(2)$-theory. Its
group of units is
\[
\SGp=
\{\alpha_0+\alpha_1\rmS\in\W(\F_{p^2})\<\rmS\>:
\alpha_0,\alpha_1\in\W(\F_{p^2}),\;\alpha_0\not\equiv0\modp{p}\},
\]
while
\[
\SGp^0=
\{\alpha_0+\alpha_1\rmS\in\W(\F_{p^2})\<\rmS\>:
\alpha_0,\alpha_1\in\W(\F_{p^2}),\;\alpha_0\equiv1\modp{p}\}
\]
is its group of \emph{strict units}, known to topologists as the \emph{Morava
stabilizer group}. Let $\bbB_2$ be the rationalization of $\W(\F_{p^2})\<\rmS\>$
which is a $4$-dimensional central division algebra over $\Q_p$. Adopting
notation of~\cite{haell}, we will also introduce the following closed subgroup
of the group of units $\tSGp=\bbB_2^\times$:
\[
\tSGp^0=\bigcup_{r\in\Z}\SGp^0\,\rmS^r.
\]
Notice also that
\[
\tSGp=
\bigcup_{r\in\Z}\SGp\,\rmS^r.
\]
Then $\SGp\ideal\tSGp$ and $\SGp^0\ideal\tSGp^0$, \ie, these are
closed normal subgroups.

We can rationalize the Tate module $\TM_p\mathcal E$, to give
\[
\VM_p\mathcal E=\Q_p\oTimes_{\Z_p}\TM_p\mathcal E,
\]
which is a $2$-dimensional vector space over the fraction field
$\B(\k)=\Q_p\oTimes_{\Z_p}\W(\k)$. In fact, $\VM_p\mathcal E$ is
a module over the rationalization $\bbB_\k=\Q_p\oTimes_{\Z_p}\D_\k$.
We can generalize Tate's Theorem to give the following, which we
only state for curves defined over $\Fpi$.
\begin{thm}\label{thm:TateGeneral}
Let $\UB{\mathcal E_1}$ and $\UB{\mathcal E_2}$ be elliptic curves
over $\Fpi$. Then the natural map
\[
\Isogu(\UB{\mathcal E_1},\UB{\mathcal E_2})
\lra
\Hom_{\bbB_{\Fpi}}(\VM_p\mathcal E_2,\VM_p\mathcal E_1)
\]
is injective and has image contained in
$\IHom_{\bbB_{\Fpi}}(\VM_p\mathcal E_2,\VM_p\mathcal E_1)$, the
open subset of invertible homomorphisms, and the induced map
\[
\Isogu(\UB{\mathcal E_1},\UB{\mathcal E_2})\sphat_p
\lra
\IHom_{\bbB_{\Fpi}}(\VM_p\mathcal E_2,\VM_p\mathcal E_1)
\]
is a homeomorphism.

The natural map
\[
\SepIsogu(\UB{\mathcal E_1},\UB{\mathcal E_2})
\lra
\Hom_{\D_{\Fpi}}(\TM_p\mathcal E_2,\TM_p\mathcal E_1)
\]
is injective with image contained in the open set of
invertible homomorphisms and the induced map
\[
\SepIsogu(\UB{\mathcal E_1},\UB{\mathcal E_2})\sphat_p
\lra
\IHom_{\D_{\Fpi}}(\TM_p\mathcal E_2,\TM_p\mathcal E_1)
\]
is a homeomorphism.

Similar results hold for the supersingular isogeny categories.
\end{thm}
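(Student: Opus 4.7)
The plan is to reduce the theorem to Tate's Theorem~\ref{thm:Tate} by first giving an explicit description of the morphism sets in $\Isogu$ and $\SepIsogu$, and then extending Tate's integral isomorphism by continuity to the $p$-adic completion. I would start by identifying the morphisms in the localized categories. Any isogeny $\phi\colon\mathcal E_1\to\mathcal E_2$ of degree $n$ satisfies $\hat\phi\o\phi=[n]_{\mathcal E_1}$, so once the $[n]_{\mathcal E}$ are inverted every isogeny becomes invertible in $\Isogu$, with $\phi^{-1}=[n]^{-1}\o\hat\phi$. Using that isogenies commute with the $[n]_{\mathcal E}$ as morphisms of abelian varieties, a calculus of fractions argument identifies $\Isogu(\UB{\mathcal E_1},\UB{\mathcal E_2})$ with the nonzero elements of $\Q\oTimes_{\Z}\Hom_{\Fpi}(\mathcal E_1,\mathcal E_2)$, and analogously $\SepIsogu(\UB{\mathcal E_1},\UB{\mathcal E_2})$ corresponds to the nonzero elements of the $\Z_{(p)}$-span of the separable part of $\Hom_{\Fpi}(\mathcal E_1,\mathcal E_2)$.

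Next I would pick $d$ large enough that both $\mathcal E_i$ and the finitely generated abelian group $\Hom_{\Fpi}(\mathcal E_1,\mathcal E_2)$ are defined over $\F_{p^d}$. Tate's Theorem then supplies the natural isomorphism
\[
\Hom_{\Fpi}(\mathcal E_1,\mathcal E_2)\otimes\Z_p\iso\Hom_{\D_{\Fpi}}(\TM_p\mathcal E_2,\TM_p\mathcal E_1),
\]
and tensoring with $\Q_p$ yields the corresponding identification of $\Hom_{\Fpi}(\mathcal E_1,\mathcal E_2)\otimes\Q_p$ with $\Hom_{\bbB_{\Fpi}}(\VM_p\mathcal E_2,\VM_p\mathcal E_1)$. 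The natural maps in the theorem factor through these isomorphisms, so injectivity is immediate from Tate's injectivity. Because every morphism in $\Isogu$ is invertible by construction (the category is a groupoid once the $[n]_{\mathcal E}$ are inverted), the image lies in $\IHom_{\bbB_{\Fpi}}$; the same applies in the $\SepIsogu$ setting after verifying via Proposition~\ref{prop:EllFGL-WFq-module} and Corollary~\ref{cor:TateModuleFormalWFq} that a separable isogeny becomes invertible in $\SepIsogu$ exactly when the induced Tate module map is an isomorphism.

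To upgrade injectivity to a homeomorphism after $p$-adic completion, I would equip the left-hand side with the topology it inherits by viewing each morphism as a pair of formal power series in the local parameters $t_{\UB{\mathcal E_i}}$, so that $p$-adic completion extends the $\Z_{(p)}$-structure (respectively the $\Q$-structure) to a $\Z_p$- (respectively $\Q_p$-) structure. On the right, one uses the canonical $p$-adic topology on a finite-dimensional $\Q_p$-vector space. Tate's Theorem is already a topological isomorphism at the integral level, and this propagates to both the rational and separable settings. Since invertibility in $\Hom_{\bbB_{\Fpi}}$ is an open condition (the complement of the vanishing locus of the reduced norm), the $p$-adic closure of the image is precisely $\IHom_{\bbB_{\Fpi}}$. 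The supersingular assertions then follow by restriction to the obvious full subcategories.

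The main obstacle I anticipate is the careful bookkeeping of topologies through the combined localization-and-completion procedure: one must verify both that the $p$-adic completion does not introduce elements outside the image of the localization map, and that every invertible $\bbB_{\Fpi}$-homomorphism is attained as a $p$-adic limit of rational multiples of honest isogenies. The latter density statement is automatic in the supersingular case, where $\Hom(\mathcal E_1,\mathcal E_2)\otimes\Q$ is a quaternion division algebra and every nonzero element is invertible, but in the ordinary case it requires a weak approximation argument when $p$ splits in the relevant CM field.
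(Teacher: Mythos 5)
The paper states Theorem~\ref{thm:TateGeneral} without providing a proof (only a one-sentence remark afterwards about density of separable isogenies), so there is no paper argument to line yours up against. Your plan --- identify $\Isogu(\UB{\mathcal E_1},\UB{\mathcal E_2})$ via calculus of fractions with the nonzero elements of $\Q\otimes_{\Z}\Hom_{\Fpi}(\mathcal E_1,\mathcal E_2)$, feed Tate's Theorem~\ref{thm:Tate} through $\otimes\Z_p$ and then $\otimes\Q_p$, and invoke openness of the invertible locus --- is exactly the natural route and is implicitly what the author has in mind. That part is sound.

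Three adjustments. First, describing $\SepIsogu(\UB{\mathcal E_1},\UB{\mathcal E_2})$ as ``the nonzero elements of the $\Z_{(p)}$-span of the separable part of $\Hom$'' does not parse: separable isogenies are not closed under addition, so their $\Z_{(p)}$-span contains non-separable elements. The correct description is the set of composites $\phi\o[n]_{\mathcal E_1}^{-1}$ with $\phi$ separable and $p\nmid n$; in the supersingular case these are precisely the elements of $\Hom\otimes\Z_{(p)}$ whose degree is a $p$-adic unit. Second, your worry about weak approximation for ordinary curves when $p$ splits in the CM field is misplaced for the $\Isogu$ statement: after passing to $\VM_p=\Q_p\otimes\TM_p$, every nonzero $\phi\in\Hom\otimes\Q$ already induces an isomorphism (the cokernel of $\TM_p\phi$ is finite, hence dies upon rationalization), so the image automatically lies in $\IHom_{\bbB_{\Fpi}}$ and density follows from Tate's theorem plus openness of that locus --- no extra approximation argument is needed. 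The genuine obstruction in the ordinary case is instead the $\SepIsogu$ claim: ordinary $\mathcal E$ admits separable degree-$p$ isogenies (kernel in $\mathcal E[p]^{\mathrm{\acute et}}$), and these do not induce isomorphisms of integral Tate modules. Third, and resolving the second: the paper only defines $\TM_p\mathcal E$ for \emph{supersingular} $\mathcal E$ (Section~\ref{sec:TateMods}), and for supersingular curves every separable isogeny automatically has degree prime to $p$ because $\mathcal E[p]$ is connected. You should make that hypothesis explicit rather than contorting the argument to accommodate ordinary curves; once it is in place, your observation that $\Hom_{\D_{\Fpi}}(\TM_p\mathcal E_2,\TM_p\mathcal E_1)$ is a rank-one torsor over the division algebra $\bbB_2$ finishes the density and invertibility claims cleanly.
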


Every separable isogeny induces an isomorphism of Tate modules,
therefore the image of $\SepIsog(\mathcal E_1,\mathcal E_2)$ in
$\Hom_{\D_{\Fpi}}(\TM_p\mathcal E_2,\TM_p\mathcal E_1)$ is a dense
subset of $\IHom_{\D_{\Fpi}}(\TM_p\mathcal E_2,\TM_p\mathcal E_1)$.

Notice also that
$\alpha\in\IHom_{\D_{\Fpi}}(\TM_p\mathcal E,\TM_p\mathcal E)$ with
$\alpha=\alpha_0+\alpha_1S$ as in Equation~\eqref{eqn:Teichmueller},
has the well defined effect $\alpha^*\omega=\alpha_0\omega$ on $1$-forms,
since this is certainly true for elements of the dense subgroup
$\End\mathcal E^\times\subset\IHom_{\D_{\Fpi}}(\TM_p\mathcal E,\TM_p\mathcal E)$.

\section{Thickening the isogeny categories}\label{sec:Thickenings}

Theorem~\ref{thm:TateGeneral} leads us to define the following `thickenings'
of our isogeny categories. Starting with the category $\Isog$ and its
subcategories, we enlarge each to a subcategory of $\tIsog$, with the
same objects but as the set of morphisms $\mathcal E_1\lra\mathcal E_2$,
\begin{align*}
\tIsog(\mathcal E_1,\mathcal E_2)
&=
\Hom_{\D_{\Fpi}}(\TM_p\mathcal E_2,\TM_p\mathcal E_1)-\{0\},\\
\tIsogu(\mathcal E_1,\mathcal E_2)&=
\IHom_{\bbB_{\Fpi}}(\VM_p\mathcal E_2,\VM_p\mathcal E_1),\\
\tSepIsogu(\mathcal E_1,\mathcal E_2)&=
\IHom_{\D_{\Fpi}}(\TM_p\mathcal E_2,\TM_p\mathcal E_1),
\end{align*}
and similarly for the supersingular categories.

We can make similar constructions for the categories whose objects
are oriented elliptic curves, setting
\begin{align*}
\toSepIsogu((\mathcal E_1,\omega_1),(\mathcal E_2,\omega_2))
&=
\IHom_{\D_{\Fpi}}(\TM_p\mathcal E_2,\TM_p\mathcal E_1),\\
\intertext{and if $\mathcal E_1,\mathcal E_2$ are supersingular,}
\toSepIsogssu((\mathcal E_1,\omega_1),(\mathcal E_2,\omega_2))
&=\IHom_{\D_{\Fpi}}(\TM_p\mathcal E_2,\TM_p\mathcal E_1).
\end{align*}

These morphism sets all have a natural profinite topologies,
and composition of morphisms is continuous. These categories
are `formal schemes' in the sense of Devinatz~\cite{Devinatz}
and Strickland~\cite{Strickland} and we will make use of this
in Section~\ref{sec:CohTopHA}. Their object sets have the form
$\Spec_{\F_p}\ssEll_*$, while the morphism set of a pair of
objects can be identified with the limit of the pro-system
obtained by factoring out by the open neighbourhoods of the
identity morphisms in the sets of homomorphisms between the
associated Tate modules. For example,
\[
\tIsog(\mathcal E_1,\mathcal E_2)
=
\invlim{U}\Hom_{\D_{\Fpi}}(\TM_p\mathcal E_2,\TM_p\mathcal E_1/U),
\]
where the limit is taken over the basic neighbourhoods $U$ of $0$
in $\TM_p\mathcal E_1$ which are just the finite index subgroups.
We can describe representing algebras for some of these formal
schemes. We will do this for the supersingular category $\toSepIsogssu$.
Recall Theorem~\ref{thm:GammaReps}.
\begin{thm}\label{thm:Rep-tIsogssu}
There is an equivariant isomorphism of topological groupoids
with $\Gm$-action,
\[
\Specf_{\Fpi}\Fpi\otimes\ssGamma_*\iso\toSepIsogssu.
\]
Moreover, $\Fpi\otimes\ssGamma_{2n}$ can be identified with the
set of all continuous functions $\toSepIsogssu\lra\Fpi$ of weight~$n$
and $\ssGamma_{2n}\subset\Fpi\otimes\ssGamma_{2n}$ can be identified
with the subset of Galois invariant functions.
\end{thm}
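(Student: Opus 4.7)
The plan is to reduce the statement to Theorem~\ref{thm:GammaReps} by identifying $\toSepIsogssu$ with the (formal completion of) $\ssELLFGL$ as topological groupoids with $\Gm$-action and compatible $\Gal(\Fpi/\F_p)$-action. Since the two categories have the same object set (oriented supersingular elliptic curves over $\Fpi$), the work is entirely on morphism sets, where we must translate between invertible $\D_{\Fpi}$-module maps of Tate modules and isomorphisms of the associated formal group laws.

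First, I would invoke Lemma~\ref{lem:EllCur-DivGps} to identify, for each supersingular $\UB{\mathcal E}$, the $p$-divisible group $\mathcal E[p^\infty]$ with $\ker[p^\infty]_{F_{\UB{\mathcal E}}}$ over $\Fpi$. Applying the anti-equivalence of Theorem~\ref{thm:DivGp-TateMod} functorially to a morphism $f\:F_{\UB{\mathcal E_1}}\lra F_{\UB{\mathcal E_2}}$ of formal group laws yields a morphism $\mathrm M(f)\:\TM_p\mathcal E_2\lra\TM_p\mathcal E_1$ of Dieudonné modules, and this correspondence restricts to a bijection between isomorphisms of formal group laws and elements of $\IHom_{\D_{\Fpi}}(\TM_p\mathcal E_2,\TM_p\mathcal E_1)$. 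Proposition~\ref{prop:EllFGL-WFq-module} guarantees this is the right extension of the map on endomorphism rings of a single curve, and the anti-equivalence of Theorem~\ref{thm:DivGp-TateMod} ensures composition and identities are preserved (with the contravariance absorbed into the order of arguments in $\toSepIsogssu((\mathcal E_1,\omega_1),(\mathcal E_2,\omega_2))=\IHom_{\D_{\Fpi}}(\TM_p\mathcal E_2,\TM_p\mathcal E_1)$).

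Next I would verify that this bijection is equivariant for the $\Gm$-action and the $\Gal(\Fpi/\F_p)$-action. For $\Gm$, both sides transform by the same scaling of invariant $1$-forms: on the $\ssELLFGL$ side this is the twisting action recalled in Section~\ref{sec:CatIsogFinFlds}, and on the $\toSepIsogssu$ side it is the action observed at the end of Section~\ref{sec:TateMods}, where $\alpha^*\omega=\alpha_0\omega$ via the Teichmüller expansion \eqref{eqn:Teichmueller}. For Galois equivariance, the identification of Tate modules with Dieudonné modules of formal group laws is natural in $\Fpi$. On the topological side, the profinite topology on $\IHom_{\D_{\Fpi}}(\TM_p\mathcal E_2,\TM_p\mathcal E_1)$ defined as the inverse limit over open subgroups of $\TM_p\mathcal E_1$ matches the pro-scheme topology on $\ssELLFGL$ used in Theorem~\ref{thm:GammaReps}, since both come from the $p$-adic filtration given by truncating power series isomorphisms modulo successive powers of the maximal ideal.

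Finally, combining this identification with Theorem~\ref{thm:GammaReps} yields the desired isomorphism $\Specf_{\Fpi}\Fpi\otimes\ssGamma_*\iso\toSepIsogssu$, and the weight/Galois statements transfer verbatim. The main obstacle I anticipate is bookkeeping for the upgrade from $\Spec$ to $\Specf$: one must check that the representability of Theorem~\ref{thm:GammaReps} as a (discrete) groupoid of $\Fpi$-points lifts to representability as a formal scheme, so that continuous functions $\toSepIsogssu\lra\Fpi$ of weight $n$ really are exhausted by $\Fpi\otimes\ssGamma_{2n}$ rather than by some larger completion. This amounts to showing that the pro-system of finite quotients of morphism sets corresponds under the Dieudonné correspondence to the pro-system cutting out $\Fpi\otimes\ssGamma_*$ as a topological $\Fpi$-algebra, a verification that proceeds by comparing filtrations on the universal strict isomorphism indexed by the $b_k$'s of Section~\ref{sec:RecollEllCoh} with the Lubin–Tate-type filtration on $\W(\F_{p^2})\<\rmS\>$-module maps coming from Proposition~\ref{prop:EndEpCompletion}.
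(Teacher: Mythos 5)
Your proposal takes a genuinely different route from the paper. The paper's proof bypasses Theorem~\ref{thm:GammaReps} and works directly: it identifies a morphism set $\toSepIsogssu(\UB{\mathcal E_1},\UB{\mathcal E_2})$ with $\W(\F_{p^2})\<\rmS\>$ and then determines the $\Fpi$-algebra of locally constant functions on this profinite ring by the $p$-adic function theory of the author's earlier paper (generalized Teichm\"uller/Mahler expansions relative to powers of $\rmS$), finally matching those basis functions, up to powers of $u$, with images of explicit elements $D_r\in\Ell_*\Ell$. You instead reduce to Theorem~\ref{thm:GammaReps}: you use the Dieudonn\'e anti-equivalence (Theorem~\ref{thm:DivGp-TateMod}), Lemma~\ref{lem:EllCur-DivGps}, and Proposition~\ref{prop:EllFGL-WFq-module} to identify isomorphisms of formal group laws $F_{\UB{\mathcal E_1}}\lra F_{\UB{\mathcal E_2}}$ with elements of $\IHom_{\D_{\Fpi}}(\TM_p\mathcal E_2,\TM_p\mathcal E_1)$, check $\Gm$- and Galois-equivariance and continuity, and then transfer. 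Both routes are sound as sketches and rely on the same deeper input (Tate's Theorem~\ref{thm:Tate} and the Dieudonn\'e dictionary). The trade-off: your approach is more categorical and avoids the $p$-adic analysis, but it leans entirely on Theorem~\ref{thm:GammaReps} (itself only sketched from Quillen universality) and does not exhibit the explicit topological basis of $\Fpi\otimes\ssGamma_{2n}$; the paper's computational route produces that explicit basis, which is precisely what is used later (e.g.\ in Section~\ref{sec:Robert}) to construct and manipulate operations. One small correction: you speak of $\toSepIsogssu$ as the ``formal completion of'' $\ssELLFGL$, but under the Dieudonn\'e identification they are already the same topological groupoid; the shift from $\Spec$ to $\Specf$ in the statement reflects the author choosing to emphasize the pro-scheme structure, not a genuine completion step. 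Your closing paragraph about matching filtrations on the $b_k$'s against the Lubin--Tate filtration of $\W(\F_{p^2})\<\rmS\>$ is exactly the content that, in the paper's proof, is carried out concretely by the Teichm\"uller expansion argument, so you have correctly identified where the real work lives even if you defer it.
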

\begin{proof}
This follows from an argument similar to that of~\cite{padic}; see
also~\cite{left} for a similar generalization. The idea is to consider
locally constant functions
\[
\toSepIsogssu((\mathcal E_1,\omega_1),(\mathcal E_2,\omega_2))
\iso
\W(\F_{p^2})\<\rmS\>
\lra\Fpi,
\]
where $(\mathcal E_1,\omega_1)$, $(\mathcal E_2,\omega_2)$ are two
elliptic curves. The space of all such functions is determined using
the ideas of~\cite{padic}, and turns out to be spanned by monomials
in generalized Teichm\"uller functions relative to expansions in terms
of powers of $\rmS$. Up to powers of $u$, these Teichm\"uller functions
are the images under the natural map $\Ell_*\Ell\lra\Fpi\otimes\ssGamma_{2n}$
of the elements $D_r\in\Ell_*\Ell$ of~\cite[equation~(9.8)]{ellellp1}.
\end{proof}

For later use we provide a useful example of such a Galois invariant
continuous function on $\toSepIsogssu$, namely
\begin{equation}
\label{ex:ind}
\ind\:\toSepIsogssu\lra\Fpi;
\quad
\ind((\mathcal E,\omega)\xrightarrow{\phi}(\mathcal E',\omega'))
=\deg\phi\modp{p},
\end{equation}
where $\deg\phi\modp{p}$ is calculated by choosing a separable isogeny
$\phi_0\:(\mathcal E,\omega)\lra(\mathcal E',\omega')$ which approximates
$\phi$ in $\Hom_{\D_{\Fpi}}(\TM_p\mathcal E',\TM_p\mathcal E)$ in the
sense that $\phi\equiv\phi_0\modp{S}$. Clearly $\ind$ is locally constant,
hence continuous, as well as Galois invariant. Also, for a composable
pair of morphisms $\phi,\theta$,
\[
\ind(\phi\theta)=\ind(\phi)\ind(\theta).
\]
\begin{prop}\label{prop:ind}
The function $\ind$ corresponds to a element of $\ssGamma_0$ which
is group-like under the coaction $\psi$ and antipode $\chi$ in the
sense that
\[
\psi(\ind)=\ind\otimes\ind,
\quad
\chi(\ind)=\ind^{-1}.
\]
\end{prop}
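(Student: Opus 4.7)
The plan is to use Theorem~\ref{thm:Rep-tIsogssu} to reinterpret $\ind$ as an element of $\ssGamma_0$ and then translate the two identities into statements about composition and inversion in the topological groupoid $\toSepIsogssu$. So the first step is to verify that $\ind$ really lands in $\ssGamma_0$: it is locally constant (a separable approximation $\phi_0$ of $\phi$ modulo $\rmS$ can be chosen uniformly on a small enough neighbourhood, and such a $\phi_0$ must have degree prime to $p$, since only such $[n]_{\mathcal E}$ were inverted to form $\SepIsogssu$) hence continuous, and it is Galois invariant because the Frobenius action on morphisms preserves degree. Its weight is $0$: the twisting action $(\mathcal E,\omega) \mapsto (\mathcal E^{\lambda^2}, \lambda\omega)$ leaves the degree of an isogeny unchanged. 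So Theorem~\ref{thm:Rep-tIsogssu} places $\ind$ inside $\ssGamma_0 \subset \Fpi \otimes \ssGamma_0$, and moreover $\ind(\phi) \in \Fpi^\times$ always.

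Next, under the isomorphism $\Specf_{\Fpi} \Fpi \otimes \ssGamma_* \iso \toSepIsogssu$, the coaction $\psi$ is by construction dual to the composition map of the groupoid: if $f \in \ssGamma_0$ has $\psi(f) = \sum f'_i \otimes f''_i$, then for any composable pair $(\phi,\theta)$ of morphisms in $\toSepIsogssu$ we have $f(\phi \o \theta) = \sum f'_i(\phi)\,f''_i(\theta)$. Combining this with the multiplicativity $\ind(\phi\theta) = \ind(\phi)\ind(\theta)$ noted just before the proposition yields $\psi(\ind) = \ind \otimes \ind$. Similarly the antipode $\chi$ is dual to groupoid inversion, and since $\ind(\Id) = 1$ (any separable approximation of an identity morphism has degree prime to $p$ and converges to degree $1$), multiplicativity on the composable pair $(\phi, \phi^{-1})$ gives $\ind(\phi)\ind(\phi^{-1}) = 1$, hence $\chi(\ind) = \ind^{-1}$.

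The one step that requires real care, and which I expect to be the main obstacle, is justifying that the multiplicativity $\ind(\phi\theta) = \ind(\phi)\ind(\theta)$ continues to hold on the thickened category $\toSepIsogssu$ rather than only on the dense subcategory of genuine separable isogenies. This follows because for fixed $\phi, \theta$ one can pick separable approximations $\phi_0, \theta_0$ simultaneously with $\phi \equiv \phi_0, \theta \equiv \theta_0, \phi\theta \equiv \phi_0\theta_0$ all modulo $\rmS$, so that the identity reduces to the well known multiplicativity $\deg(\phi_0\theta_0) = \deg(\phi_0)\deg(\theta_0)$ of degrees of honest separable isogenies, taken modulo $p$. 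Once this continuity-of-multiplicativity point is in hand, the two group-likeness identities follow by the dualisation described above.
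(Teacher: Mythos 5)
Your proof is correct and uses exactly the approach the paper sets up: the paper establishes $\ind\in\ssGamma_0$ (locally constant, Galois-invariant, weight~$0$) and states the multiplicativity $\ind(\phi\theta)=\ind(\phi)\ind(\theta)$ just before the proposition, leaving the dualisation of groupoid composition and inversion via Theorem~\ref{thm:Rep-tIsogssu} as the (omitted) remaining step, which you have supplied. Your supplementary justification of multiplicativity on the thickening is also sound, since $\rmS$ generates a two-sided ideal, so any choices of separable approximations $\phi_0,\theta_0$ automatically give $\phi\theta\equiv\phi_0\theta_0\modp{\rmS}$.
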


\section{Continuous cohomology of profinite groupoids}
\label{sec:ContCohProGpds}

The results of this section appeared in greater detail in~\cite{CohGpds}.

Let $\Gpd$ be a groupoid, \ie, a small category in which every morphism
is invertible. The function $\Obj\Gpd\lra\Mor\Gpd$ which sends each
object to its identity morphism embeds $\Obj\Gpd$ into $\Mor\Gpd$ so we
can view $\Gpd$ as consisting of the set of all its morphisms. We will
use the notation
\[
\Gpd(*,x)=\bigcup_{y\in\Obj\Gpd}\Gpd(y,x)\subset\Gpd.
\]
Following Higgins~\cite{Higgins} we define a subgroupoid $\GpdN$ of a
groupoid $\Gpd$ to be \emph{normal} if it satisfies the following
conditions: \\
A) $\Obj\GpdN=\Obj\Gpd$; \\
B) for every morphism $x\xrightarrow{f}y$ in $\Gpd$,
\[
f\GpdN(x,x)f^{-1}=\GpdN(x,x).
\]
We will write $\GpdN\ideal\Gpd$ if $\GpdN$ is a normal subgroupoid of
$\Gpd$. When $\GpdN\ideal\Gpd$ we can form the \emph{quotient groupoid}
$\Gpd/\GpdN$ whose objects are equivalence classes of objects of $\Gpd$
under the relation $\sim$ for which
\[
x\sim y
\iff
\text{$\exists$ $x\xrightarrow{h}y$ in $\GpdN$},
\]
and whose morphisms are equivalence classes of morphisms under the relation
\[
x\xrightarrow{f}y\sim x'\xrightarrow{f'}y'
\iff
\exists\;x\xrightarrow{p}x',\,y\xrightarrow{q}y'
\text{ in $\GpdN$ s.t. }f'=qfp^{-1}.
\]
Whenever two classes contain composable elements, composition of equivalence
classes can be defined by
\[
[y\xrightarrow{g}z][x\xrightarrow{f}y]=[x\xrightarrow{gf}z],
\]
which is well defined since for morphisms $x\xrightarrow{p}x'$,
$y\xrightarrow{q}y'$, $y\xrightarrow{r}y'$, $z\xrightarrow{s}z'$,
\[
(sgr^{-1})(qfp^{-1})=sg(r^{-1}q)fp^{-1}=(sh)gfp^{-1}
\]
where $h=g(r^{-1}q)g^{-1}$ is in $\GpdN(z,z)$. There is a quotient functor
$\Gpd\lra\Gpd/\GpdN$.

We define a groupoid $\Gpd$ to be \emph{automorphism finite} if for every
$x\in\Obj\Gpd$, $\Gpd(x,x)$ is a finite group. We define a groupoid $\Gpd$
to be (\emph{automorphism}) \emph{profinite} if it is the inverse limit of
automorphism finite groupoids,
\[
\Gpd\iso\invlim{\GpdN\ideal\Gpd}\Gpd/\GpdN.
\]
Such a groupoid has a natural topology in which the basic open sets have
the form
\[
U(x\xrightarrow{f}y,\GpdN)
=\{x\xrightarrow{g}y:gf^{-1}\in\GpdN(y,y)\},
\]
where $f\in\Mor\Gpd$ and $\Gpd/\GpdN$ is automorphism finite.

A \emph{topological groupoid} is a groupoid which is a topological space
such that the partial composition $\Gpd\Times_{\Obj\Gpd}\Gpd\lra\Gpd$,
inverse function $\Gpd\lra\Gpd$, domain and codomain functions
$\Gpd\lra\Obj\Gpd$ are all continuous, where $\Obj\Gpd$ has the subspace
topology. A profinite groupoid in the above sense is a topological groupoid.

A groupoid $\Gpd$ is \emph{connected} if for every pair of objects $x,y$
in $\Gpd$ there is a morphism $x\xrightarrow{f}y$. More generally, $\Gpd$
is the disjoint union of its \emph{connected components} which are the
connected subgroupoids.

In order to define the cohomology of profinite groupoids we first need
to define suitable a notion of module, and we follow the ideas of Galois
cohomology, accessibly described in Shatz~\cite{Shatz}, with Weibel~\cite{Weibel}
providing a more general cohomological discussion.

For a profinite groupoid $\Gpd$, a \emph{proper $\Gpd$-module} (over
a commutative unital ring $\k$) is a functor $\ubar M\:\Gpd\lra\Mod_\k$
in which for $x,y\in\Obj\Gpd$, $m\in\ubar M(x)$, $f\in\Gpd(x,y)$, the
set
\[
\Stab_{\Gpd}(m,f,y)=\{g\in\Gpd(x,y):\ubar M(g)m=\ubar M(f)m\}
\subset\Gpd(x,y)
\]
is open. This generalizes the notion of proper module for a profinite
group, for which stabilizers of points are of finite index. We will denote
the category of all proper $\Gpd$-modules over $\k$ by $\Mod_{\k,\Gpd}$,
where the morphisms are natural transformations.

We set
\[
\ubar{\mathbf M}=\Prod_{x\in\Obj\Gpd}\ubar M(x)
\]
and view this as a discrete topological space. We define a \emph{section}
of $\ubar M$ to be a function $\Phi\:\Obj\Gpd\lra\ubar{\mathbf M}$ such
that
\[
\Phi(x)\in\ubar M(x)\quad(x\in\Obj\Gpd).
\]
We will denote the $\k$-module of all sections by $\Sect(\Gpd;\ubar M)$.
\begin{prop}\label{prop:GpdMod-AbCat}
Let $\Gpd$ be a profinite groupoid and $\k$ a commutative unital ring.
\begin{enumerate}
\item[a)]
$\Mod_{\k,\Gpd}$ is an abelian category with structure inherited from
that of\/ $\Mod_\k$.
\item[b)]
$\Mod_{\k,\Gpd}$ has sufficiently many injectives.
\end{enumerate}
\end{prop}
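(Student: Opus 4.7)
The plan is to reduce both parts to known facts about $\Mod_\k$ by performing all constructions objectwise over $\Obj\Gpd$ and then verifying that the properness condition is preserved.

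For (a), I would define kernels, cokernels, finite direct sums and products of proper $\Gpd$-modules objectwise, setting for instance $(\ker\phi)(x)=\ker\phi_x$ and $(\coker\phi)(x)=\coker\phi_x$ with $\Gpd$-action induced by restriction or by passage to the quotient. The abelian category axioms follow immediately from those of $\Mod_\k$ at each object, so the only substantive point is to check that the resulting functors remain proper. For kernels this is automatic, as the stabilizer of $m\in\ker\phi_x$ in the new module equals its stabilizer in $\ubar M(x)$. For cokernels, the stabilizer of a class $[n]\in\ubar N(x)/\im\phi_x$ with respect to $f\in\Gpd(x,y)$ contains the stabilizer of $n$ in $\ubar N$, so it is a union of translates of an open subset and is itself open. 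Finite direct sums are handled by intersecting the two component stabilizers.

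For (b), the strategy is the coinduction argument familiar from Galois cohomology in the style of~\cite{Shatz}. The forgetful functor
\[
U\:\Mod_{\k,\Gpd}\lra\Prod_{x\in\Obj\Gpd}\Mod_\k,
\quad
\ubar M\longmapsto(\ubar M(x))_{x\in\Obj\Gpd},
\]
is exact and admits a right adjoint $R$ defined on a family $(A_x)$ by
\[
R(A)(y)=\Prod_{x\in\Obj\Gpd}\Mapc(\Gpd(x,y),A_x),
\]
with the $A_x$ carrying the discrete topology and $\Gpd$-action by precomposition in the $\Gpd(x,-)$ variable. Because $U$ is exact, $R$ preserves injectives. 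To embed a given $\ubar M$ into an injective, I would embed each $\ubar M(x)$ into an injective $\k$-module $I_x$ (which exists since $\Mod_\k$ has enough injectives) and then use the composite $\ubar M\lra RU(\ubar M)\lra R((I_x))$ obtained from the adjunction unit. Evaluating the unit at $x=y$ and $g=\Id_y$ recovers $m\in\ubar M(y)$, so the unit is monic, while $R$ preserves monos as a right adjoint.

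The main technical obstacle I anticipate is verifying that $R$ genuinely lands in $\Mod_{\k,\Gpd}$, i.e., that each factor $\Mapc(\Gpd(x,y),A_x)$ is well defined using the profinite topology on $\Gpd(x,y)$ together with the discrete topology on $A_x$, and that the induced action by precomposition gives open stabilizers. Both issues come down to the fact that a continuous map from a profinite space to a discrete set factors through a finite quotient $\Gpd(x,y)/\GpdN$ for some $\GpdN\ideal\Gpd$, so precomposition by a sufficiently small open neighbourhood of the identity in $\Gpd(y,y')$ acts trivially on any such map.
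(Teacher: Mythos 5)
Your handling of part (a) is correct: the objectwise constructions and the properness checks for kernels and cokernels are exactly what is needed, and in particular your observation that the cokernel stabilizer $\Stab_\Gpd([n],f,y)$ is a union of the cosets $\Stab_\Gpd(n,g_0,y)$ (for $g_0$ ranging over that set) is the right way to see it is open. The paper itself gives no details here, simply deferring to Shatz~\cite{Shatz} and to~\cite{CohGpds}, and your account is a reasonable filling-in.

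Part (b) has a genuine gap. The candidate right adjoint
\[
R(A)(y)=\Prod_{x\in\Obj\Gpd}\Mapc(\Gpd(x,y),A_x)
\]
need not take values in $\Mod_{\k,\Gpd}$. Your verification correctly shows that \emph{each} $\psi_x\in\Mapc(\Gpd(x,y),A_x)$ factors through a finite quotient and hence has open stabilizer under precomposition, but the stabilizer of an element $(\psi_x)_x$ of the objectwise product is the \emph{intersection} over all $x\in\Obj\Gpd$ of these open sets. When $\Obj\Gpd$ is infinite this intersection can fail to be open: one may build connected profinite groupoids with infinitely many objects and nontrivial automorphism groups (action groupoids of a profinite group on an infinite profinite set, say) and choose the $\psi_x$ to be locally constant with periods shrinking in $x$, so that only the identity lies in every factor's stabilizer. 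In the paper's intended application $\Obj\Gpd$ is indeed infinite, so this is not a vacuous concern. Your closing paragraph names the right obstacle but only addresses the single-factor statement, not the product.

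The rest of your argument survives intact once $R$ is corrected; here are two ways. The quickest repair is to replace $R(A)$ by the $\Gpd$-submodule $R(A)^{\mathrm{pr}}$ of elements with open stabilizer. Any $\Gpd$-equivariant map $\tilde\theta\:\ubar N\lra R(A)$ from a proper $\ubar N$ automatically lands in $R(A)^{\mathrm{pr}}$, by the same coset-union argument you used for cokernels: by naturality, $\Stab_\Gpd(\tilde\theta_y(m),f,y')$ is the union of the open sets $\Stab_\Gpd(m,h_0,y')$ over $h_0$ in it. Hence $R^{\mathrm{pr}}$ is still right adjoint to the exact forgetful functor, still receives the monic adjunction unit, and still preserves injectives. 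Alternatively, closer to Shatz's template, coinduce from a single chosen object $x_C$ in each connected component $C$: set $R_{x_C}(I)(y)=\Mapc(\Gpd(x_C,y),I)$ for an injective $\k$-module $I$ containing $\ubar M(x_C)$. A single factor is automatically proper, the evaluation functor at $x_C$ is exact so $R_{x_C}$ preserves injectives, and the unit is monic on $C$ because $\ubar M$ sends morphisms to isomorphisms; assembling over components (which is harmless since $\Mod_{\k,\Gpd}$ splits as a product of the component subcategories) gives the required embedding into an injective.
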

\begin{proof}
These are straightforward generalizations of the analogous results for
profinite groups found in~\cite{Shatz}; further details appear in~\cite{CohGpds}.
\end{proof}

We will consider two functors $\Mod_{\k,\Gpd}\lra\Mod_\k$. The first is
a sort of fixed point construction
\[
(\ )^\Gpd\:\ubar M\rightsquigarrow\ubar M^\Gpd
=
\{\Phi\in\Sect(\Gpd;\ubar M):
\forall f\in\Gpd,\;\ubar M(f)(\Phi(\dom f))=\Phi(\codom f)\}.
\]
This functor is left exact, so it has right derived functors
$\mathrm{R}^n(\ )^\Gpd$ for $n\geq0$ which can be viewed as the continuous
cohomology of $\Gpd$, $\Hc^n(\Gpd;\ )$.

For $x_0\in\Obj\Gpd$, we define a kind of `localization at $x_0$',
\[
(\ )_{x_0}\:\ubar M\rightsquigarrow\ubar M(x_0)^{\Gpd(x_0,x_0)},
\]
obtained by restricting to $\ubar M(x_0)$ and taking the fixed points
under the action of the automorphism group of $x_0$. This is left exact
and has right derived functors $\mathrm{R}^n(\ )_{x_0}$ which we will
denote by $\mathrm{H}_{x_0}^n(\Gpd;\ )$.
\begin{prop}\label{prop:DerFunct}
If the profinite groupoid $\Gpd$ is connected, then for any $x_0\in\Obj\Gpd$,
there is a natural equivalence of functors
\[
(\ )_{x_0}\iso(\ )^\Gpd.
\]
Hence there are natural equivalences of functors
\[
\Hc^n(\Gpd;\ )\iso\mathrm{H}_{x_0}^n(\Gpd;\ )\quad(n\geq0).
\]
\end{prop}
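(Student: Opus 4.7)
The plan is to first exhibit an explicit natural isomorphism $\rho\:(\ )^\Gpd\iso(\ )_{x_0}$ of left exact functors on $\Mod_{\k,\Gpd}$, and then deduce the cohomological statement formally from the uniqueness of right derived functors (using that this category has enough injectives by Proposition~\ref{prop:GpdMod-AbCat}).

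The natural transformation is evaluation at $x_0$: send $\Phi\in\ubar M^\Gpd$ to $\Phi(x_0)\in\ubar M(x_0)$. For any $g\in\Gpd(x_0,x_0)$ the defining invariance condition gives $\ubar M(g)(\Phi(x_0))=\Phi(x_0)$, so $\rho(\Phi)$ lies in $\ubar M(x_0)^{\Gpd(x_0,x_0)}$; naturality in $\ubar M$ is immediate. For injectivity I would invoke connectedness: given $y\in\Obj\Gpd$, pick any morphism $f\:x_0\lra y$; if two invariant sections $\Phi,\Psi$ agree at $x_0$, then $\Phi(y)=\ubar M(f)(\Phi(x_0))=\ubar M(f)(\Psi(x_0))=\Psi(y)$. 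For surjectivity, given $m\in\ubar M(x_0)^{\Gpd(x_0,x_0)}$, define a section $\Phi_m$ by choosing, for each $y$, a morphism $f_y\:x_0\lra y$ (using connectedness) and setting $\Phi_m(y)=\ubar M(f_y)(m)$. If $f'_y\:x_0\lra y$ is another choice then $f_y^{-1}f'_y\in\Gpd(x_0,x_0)$ fixes $m$, so $\Phi_m(y)$ is independent of the choice; and for any $h\:y\lra z$ the morphism $h\circ f_y\:x_0\lra z$ is a legitimate choice at $z$, yielding $\ubar M(h)(\Phi_m(y))=\Phi_m(z)$, so $\Phi_m\in\ubar M^\Gpd$. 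Clearly $\rho(\Phi_m)=m$, so $\rho$ is a natural isomorphism.

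For the derived functor statement, once the two left exact functors are naturally equivalent, their right derived functors agree up to natural equivalence by the universal property of $\delta$-functors, giving $\Hc^n(\Gpd;\ )\iso\mathrm{H}_{x_0}^n(\Gpd;\ )$ for all $n\geq0$.

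The main obstacle I anticipate is book-keeping around the topology: strictly speaking one must check that the properness condition on $\ubar M$ is respected in the above. This reduces to the observation that, since $\ubar M(x_0)$ is discrete, the stabilizer of any $m\in\ubar M(x_0)^{\Gpd(x_0,x_0)}$ in $\Gpd(x_0,x_0)$ is the entire automorphism group (which is trivially open), so the action used in the definition of $(\ )_{x_0}$ is the continuous-fixed-point functor; and the section $\Phi_m$ produced above automatically has the required measurability/continuity properties inherited from $\ubar M$. A subsidiary point to check is that the surjection is constructed without making coherent choices of the $f_y$, so one should verify independence of these choices before claiming a well-defined inverse to $\rho$; this is exactly what the invariance of $m$ under $\Gpd(x_0,x_0)$ ensures.
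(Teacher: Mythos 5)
Your proof is essentially the same as the paper's: the paper constructs the isomorphism $F\colon\ubar M_{x_0}\to\ubar M^\Gpd$, $m\mapsto\Phi_m$ with $\Phi_m(x)=\ubar M(f)m$ for any $f\in\Gpd(x_0,x)$, which is exactly the inverse you build in your surjectivity step; you simply present the map in the opposite direction (evaluation at $x_0$) and then verify bijectivity. The deduction of the cohomological statement via the universal property of $\delta$-functors matches the paper's appeal to Weibel as well.
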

\begin{proof}
For the first part, we produce a $\k$-isomorphism
$F\:\ubar M_{x_0}\xrightarrow{\iso}\ubar M^\Gpd$. For $m\in\ubar M_{x_0}$,
define $F(m)=\Phi_m$ by
\[
\Phi_m(x)=\ubar M(f)m\quad(x\in\Obj\Gpd)
\]
where we choose \emph{any} $f\in\Gpd(x_0,x)$; this choice does not affect
the outcome since for a second choice $g\in\Gpd(x_0,x)$,
$f^{-1}g\in\Gpd(x_0,x_0)$ and therefore
\[
\ubar M(g)m=\ubar M(f)\ubar M(f^{-1}g)m=\ubar M(f)m.
\]
This is easily verified to be an isomorphism.

The second part follows using a standard result on $\delta$-functors
described in~\cite[chapter 2]{Weibel}.
\end{proof}
This result shows that the calculation of the continuous cohomology
$\Hc^*(\Gpd;\ubar M)$ of a proper module $\ubar M$ reduces to continuous
group cohomology.
\begin{thm}\label{thm:GpdCoh-ChRings-Conn}
If $\Gpd$ is a connected profinite groupoid and $\ubar M$ is a proper
$\Gpd$-module over $\k$, then for any $x_0\in\Obj\Gpd$,
\[
\Hc^*(\Gpd;\ubar M)\iso\Hc^*(\Gpd(x_0,x_0);\ubar M(x_0)).
\]
\end{thm}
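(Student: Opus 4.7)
The plan is to combine Proposition~\ref{prop:DerFunct}, which already identifies $\Hc^n(\Gpd;\ubar M)$ with $\mathrm H^n_{x_0}(\Gpd;\ubar M)$ via the derived functors of the localization $(\ )_{x_0}$, with a translation of $(\ )_{x_0}$ into the language of continuous cohomology of the vertex group $\Gpd(x_0,x_0)$. The cleanest way to effect the translation is to upgrade the isomorphism of functors $(\ )_{x_0}\iso(\ )^\Gpd$ to an equivalence between the ambient module categories $\Mod_{\k,\Gpd}$ and $\Mod_{\k,\Gpd(x_0,x_0)}$, under which the groupoid fixed-point functor becomes the ordinary group fixed-point functor. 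Once that equivalence is in place, the continuous cohomology groups agree by a standard derived-functor comparison, and combining with Proposition~\ref{prop:DerFunct} yields the result.

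First I would construct the equivalence. Evaluation at $x_0$, $\mathrm{ev}_{x_0}\:\ubar M\rightsquigarrow\ubar M(x_0)$, is manifestly $\k$-linear and exact (limits and colimits in a functor category are computed pointwise), and the stabilizer condition at $x_0\xrightarrow{\id}x_0$ shows $\ubar M(x_0)$ is proper as a $\Gpd(x_0,x_0)$-module. For the inverse, use connectedness plus the axiom of choice to pick, for each object $x\in\Obj\Gpd$, a morphism $f_x\in\Gpd(x_0,x)$ with $f_{x_0}=\id_{x_0}$. Given a proper $\Gpd(x_0,x_0)$-module $N$, define $\ubar{N^{\#}}(x)=N$ for all $x$, and for $g\in\Gpd(x,y)$ let $\ubar{N^{\#}}(g)$ act on $N$ as $f_y^{-1}\o g\o f_x\in\Gpd(x_0,x_0)$. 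Continuity of composition and inversion in the profinite groupoid (applied to open neighbourhoods in $\Gpd(x_0,x_0)$ coming from the properness of $N$) then guarantees that stabilizers in $\ubar{N^{\#}}$ are open, so $\ubar{N^{\#}}$ is a proper $\Gpd$-module. The composites $\mathrm{ev}_{x_0}\o(\ )^{\#}$ and $(\ )^{\#}\o\mathrm{ev}_{x_0}$ are naturally isomorphic to the respective identities, the latter via the natural isomorphism whose value at $x$ is $\ubar M(f_x)\:N=\ubar M(x_0)\lra\ubar M(x)$; this also shows the equivalence is independent of the choice of the~$f_x$.

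Under this equivalence the fixed-point functor $(\ )_{x_0}$ on $\Mod_{\k,\Gpd}$ corresponds, by construction, to the ordinary fixed-point functor $N\mapsto N^{\Gpd(x_0,x_0)}$ on $\Mod_{\k,\Gpd(x_0,x_0)}$. Because the equivalence is exact and carries injectives to injectives (both categories having enough injectives by Proposition~\ref{prop:GpdMod-AbCat} and its analogue for profinite groups, cf.~\cite{Shatz}), their right derived functors agree:
\[
\mathrm H^n_{x_0}(\Gpd;\ubar M)\iso\Hc^n(\Gpd(x_0,x_0);\ubar M(x_0))\quad(n\geq0).
\]
Combining with Proposition~\ref{prop:DerFunct} yields the theorem.

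The main obstacle is the verification that $(\ )^{\#}$ really lands in \emph{proper} $\Gpd$-modules and that the equivalence respects injectivity — that is, that the topological/continuity hypotheses built into the notion of a proper module are preserved by the transfer along the chosen morphisms $f_x$. This reduces to the fact that the basic open neighbourhoods described before Proposition~\ref{prop:GpdMod-AbCat} pull back along composition with a fixed morphism to basic open neighbourhoods, which is essentially the content of the profinite structure on $\Gpd$. Once this bookkeeping is done, the remainder is formal derived-functor nonsense.
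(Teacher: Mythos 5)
Your proof is correct and fills the gap that the paper leaves implicit. The paper presents this theorem as an immediate consequence of Proposition~\ref{prop:DerFunct} and refers to~\cite{CohGpds} for details; the missing step is precisely the one you supply, namely the passage from the right derived functors of $(\ )_{x_0}$ computed in $\Mod_{\k,\Gpd}$ to the right derived functors of the fixed-point functor computed in the category of proper $\Gpd(x_0,x_0)$-modules. Your construction of the quasi-inverse $(\ )^\#$ via chosen morphisms $f_x\in\Gpd(x_0,x)$ and the verification that it preserves properness are exactly what is needed, and you correctly pinpoint the crux: openness of stabilizers for $\ubar{N^\#}$ reduces to the fact that pre- and post-composition with a \emph{fixed} morphism give homeomorphisms $\Gpd(x,y)\to\Gpd(x_0,x_0)$, which follows from the normality condition on the $\GpdN$ defining the profinite topology and does \emph{not} require the assignment $x\mapsto f_x$ to be continuous. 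This is worth emphasizing in light of Remark~\ref{rem:TopSp}: the paper warns that a topological splitting $\Gpd\iso\Obj\Gpd\sdp\Gpd(x_0,x_0)$ is unavailable in the intended application, but your argument shows that the module-category equivalence does not need one; the remark is really aimed at the later, more rigid Hopf-algebroid setting of Sections~\ref{sec:CohTopHA} and~\ref{sec:Splittings}, where a continuous section is genuinely constructed for the quotient categories. One small remark: you do not actually need to invoke Shatz to get enough injectives in $\Mod_{\k,\Gpd(x_0,x_0)}$, since an equivalence of abelian categories automatically transports injective objects, so Proposition~\ref{prop:GpdMod-AbCat} already suffices. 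The remainder of your argument is the standard transport of derived functors along an exact equivalence and is sound.
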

In the general case we have the following.
\begin{thm}
\label{thm:GpdCoh-ChRings-Gen}
If $\Gpd$ is a profinite groupoid and $\ubar M$ is a proper $\Gpd$-module
over $\k$, then for each $n\geq0$,
\[
\Hc^n(\Gpd;\ubar M)
\iso
\prod_C\Hc^n(\Gpd(x_C,x_C);\ubar M(x_C)),
\]
where $C$ ranges over the connected components of $\Gpd$ and $x_C$ is a
chosen object of $C$.
\end{thm}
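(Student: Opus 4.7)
The plan is to reduce the general statement to the connected case already handled in Theorem~\ref{thm:GpdCoh-ChRings-Conn}. The key observation is that a profinite groupoid $\Gpd$ decomposes as a disjoint union $\Gpd = \bigsqcup_C C$ of its connected components, and this decomposition is compatible with all the relevant structure: every morphism has source and target in the same component, so composition never crosses components, and the profinite topology restricts to a profinite topology on each~$C$.

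First I would show that restriction to connected components yields an equivalence of abelian categories
\[
\Mod_{\k,\Gpd} \xrightarrow{\;\iso\;} \prod_C \Mod_{\k,C},
\qquad
\ubar M \longmapsto (\ubar M|_C)_C.
\]
This is a routine verification: giving a functor $\ubar M : \Gpd \lra \Mod_\k$ is the same as giving its restrictions to each $C$, and the properness condition (openness of stabilizers) is tested separately in each component. Under this equivalence the fixed-point functor $(\ )^\Gpd$ splits as $\prod_C (\ )^C$, since a section $\Phi \in \Sect(\Gpd;\ubar M)$ decomposes uniquely as a family of component-wise sections $\Phi|_C \in \Sect(C;\ubar M|_C)$, and the invariance condition $\ubar M(f)(\Phi(\dom f)) = \Phi(\codom f)$ only ever involves a morphism $f$ lying in a single component.

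Next I would pass to derived functors. By Proposition~\ref{prop:GpdMod-AbCat}, each $\Mod_{\k,C}$ has enough injectives, and an injective in the product abelian category $\prod_C \Mod_{\k,C}$ is precisely a family of injectives in the factors. Hence an injective resolution of $\ubar M$ restricts in each component to an injective resolution of $\ubar M|_C$, and applying $(\ )^\Gpd$ to this resolution produces the product (over $C$) of the complexes computing $\Hc^*(C;\ubar M|_C)$. Because products in the category of $\k$-modules are exact, cohomology commutes with the product, yielding
\[
\Hc^n(\Gpd;\ubar M) \iso \prod_C \Hc^n(C;\ubar M|_C).
\]
Applying Theorem~\ref{thm:GpdCoh-ChRings-Conn} to each connected component $C$ with its chosen object $x_C$ then identifies $\Hc^n(C;\ubar M|_C)$ with $\Hc^n(\Gpd(x_C,x_C);\ubar M(x_C))$, which completes the proof.

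The only real point requiring care, rather than being a serious obstacle, is checking that the restriction equivalence is genuinely an equivalence of abelian categories with enough injectives and that arbitrary products of injectives remain injective here, so that derived functors distribute over the component decomposition; everything else is formal once the decomposition of $\Gpd$ itself into connected components is in hand.
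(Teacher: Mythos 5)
Your proposal is correct and is the argument one would expect here. The paper states Theorem~\ref{thm:GpdCoh-ChRings-Gen} without giving an explicit proof (the details are deferred to the preprint~\cite{CohGpds}), so there is no written-out argument in the paper itself to compare against; but the reduction you give---decompose $\Gpd$ into its connected components, observe that the profinite topology, the properness condition, the sections functor, and the invariance condition all respect this decomposition, identify $\Mod_{\k,\Gpd}$ with $\prod_C\Mod_{\k,C}$, note that injectives and injective resolutions in that product are exactly componentwise injective resolutions, and finally use the exactness of products of $\k$-modules to commute cohomology past $\prod_C$ before invoking the connected case Theorem~\ref{thm:GpdCoh-ChRings-Conn}---is precisely the standard route, and it is the one signalled by the paper's phrasing ``In the general case we have the following.'' Note also that it avoids any appeal to a topological splitting of $\Gpd$ over $\Obj\Gpd$, which Remark~\ref{rem:TopSp} points out would fail in the intended application. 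One small point you could make more explicit: you need each component $C$ to itself be a profinite groupoid so that Theorem~\ref{thm:GpdCoh-ChRings-Conn} applies; this follows by restricting the cofinal system of normal subgroupoids $\GpdN\ideal\Gpd$ to $C$, since each $\GpdN$ has $\Obj\GpdN=\Obj\Gpd$ and normality is checked within components.
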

\begin{rem}
\label{rem:TopSp}
The familiar approach to proving results of this kind would require there
to be a topological splitting of $\Gpd$ as $\Obj\Gpd\sdp\Gpd(x_0,x_0)$.
Indeed, in our application no such continuous splitting exists and we would
need to pass to a quotient category to make use of such an argument.
\end{rem}
\begin{rem}
\label{rem:Lim}
The fixed point functor $(\ )^\Gpd$ agrees with the limit over the category
$\Gpd$, hence the continuous cohomology functors $\Hc^n(\Gpd;\ )$ are the
derived functors of the limit. This connects our work with recent results
of Hopkins, Mahowald \etal. on the spectra $\EO_2$ and $\TMF$, the ring of
topological modular forms.
\end{rem}

\section{On the cohomology of topological Hopf algebroids}\label{sec:CohTopHA}

The best reference for the following material is Devinatz~\cite{Devinatz}
which contains the most thorough discussion we are aware of on the continuous
cohomology of topological groupoid schemes of the type under consideration
in this paper. We simply sketch the required details from~\cite[\S1]{Devinatz},
amending them slightly to suit our needs. An alternative perspective on the
cohomology of categories is provided by Baues \& Wirsching~\cite{Baues-Wirsching}.

Let $\mathcal C$ be a groupoid, $\k$ a commutative unital ring. Then $\Modc_\k$
will denote the category of \emph{complete Hausdorff $\k$-modules} defined in
\cite[Definition 1.1]{Devinatz}. Similarly, $\Algc_\k$ will denote the category
of \emph{complete commutative $\k$-algebras}. We will refer to morphisms in these
two categories as \emph{continuous homomorphisms} of complete modules or algebras.

A \emph{cogroupoid object} in $\Algc_\k$ is then a pair $(A,\Gamma)$ of objects
in $\Algc_\k$ together with the usual structure maps $\eta_R,\eta_L,\epsilon,\psi,\chi$
for a Hopf algebroid over $\k$ except that in all relevant diagrams the completed
tensor product $\cTensor{\k}$ has to be used; such data
$(A,\Gamma,\eta_R,\eta_L,\epsilon,\psi,\chi)$ will be referred to as constituting
a \emph{complete topological Hopf algebroid}. Equivalently, $\mathcal C=\Specf_\k\Gamma$
is an (affine) \emph{formal groupoid scheme}.

We can also consider the category $\Comodc_\Gamma$ of complete (left)
$\Gamma$-comodules with morphisms being comodule morphisms also lying in $\Modc_\k$.
For two objects $M$, $N$ in $\Comodc_\Gamma$, we will denote the set of morphisms
$M\lra N$ by $\Hom_\Gamma(M,N)$. The functor
\[
\Comodc_\Gamma\lra\Mod_\k;
\quad
M\rightsquigarrow\Hom_\Gamma(A,M)
\]
is left exact and its right derived functors form a graded functor $\Ext^*_\Gamma(A,\ )$.

Given a continuous morphism of complete Hopf algebroids $f\:(A,\Gamma)\lra(B,\Sigma)$
and a $\Gamma$-comodule $M$, there is an induced map
\[
H^*f\:\Ext^*_\Gamma(A,M)\lra\Ext^*_\Sigma(B,f^*M)
\]
where $f^*M=B\cTensor{A}M$ has the $\Sigma$-comodule structure described
in~\cite{Devinatz}.

Now recall from~\cite[definition 1.14]{Devinatz} the notion of a \emph{natural
equivalence} $\tau\:f\lra g$ between two continuous morphisms
$f,g\:(A,\Gamma)\lra(B,\Sigma)$ of complete Hopf algebroids. In particular,
such a $\tau$ induces a continuous homomorphism of complete $\Sigma$-comodules
$\tau^*\:g^*M\lra f^*M$, which in turn induces a map
\[
H^*\tau\:\Ext_\Sigma^*(B,g^*M)\lra\Ext_\Sigma^*(B,f^*M).
\]
We will require Devinatz's important Proposition~1.16.
\begin{prop}
\label{prop:EqceGpdsExtIso}
Let $\tau\:f\lra g$ be a natural equivalence between continuous morphisms
$f,g\:(A,\Gamma)\lra(B,\Sigma)$ of complete Hopf algebroids. Then for any
continuous $\Gamma$-comodule $M$,
\[
H^*(\tau^*)\o H^*g=H^*f\:
\Ext_\Gamma^*(A,M)\lra\Ext_\Sigma^*(B,f^*M).
\]
\end{prop}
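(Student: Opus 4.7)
The plan is to reduce the identity to its degree zero component and then propagate to higher degrees via the universal property of derived functors. Both $H^*f$ and the composite $H^*(\tau^*)\o H^*g$ are natural transformations between the same pair of cohomological $\delta$-functors on $\Comodc_\Gamma$, namely $\Ext_\Gamma^*(A,-)$ and $\Ext_\Sigma^*(B,f^*(-))$. Since $\Comodc_\Gamma$ has sufficiently many injectives in the sense analogous to Proposition~\ref{prop:GpdMod-AbCat}(b), the target is effaceable in positive degrees and the standard $\delta$-functor uniqueness result (\cite[Ch.~2]{Weibel}) shows that any morphism between such $\delta$-functors is determined by its behaviour in degree zero. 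Thus it suffices to verify the equation on $\Ext^0_\Gamma(A,M)=\Hom_\Gamma(A,M)$.

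For the degree zero check, I would unpack the definitions explicitly. Given $\phi\in\Hom_\Gamma(A,M)$, the morphism $H^0f(\phi)$ is the continuous $\Sigma$-comodule map $B\iso B\cTensor{A}A\lra B\cTensor{A}M=f^*M$ induced from $1\cTensor{A}\phi$, and similarly $H^0g(\phi)$ lands in $g^*M$. A natural equivalence $\tau\:f\lra g$, in the sense of Devinatz's Definition~1.14, is a continuous algebra homomorphism $B\lra\Sigma$ which intertwines the unit maps of $f$ and $g$ via the coproduct $\psi$ on $\Sigma$; the induced $\tau^*\:g^*M\lra f^*M$ is then essentially convolution by (the image of) $\tau$ applied to the coaction on $M$. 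The required identity $\tau^*\o H^0g(\phi)=H^0f(\phi)$ then falls out of a diagram chase combining the $\Gamma$-comodule coassociativity axiom satisfied by the pair $(M,\phi)$ with the defining intertwining identity of $\tau$.

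The principal obstacle is this degree zero verification, which, although conceptually immediate, demands careful bookkeeping of the two distinct $\Sigma$-comodule structures borne by the same underlying complete module $B\cTensor{A}M$ (once as $f^*M$, once as $g^*M$), together with a consistent choice of sign and side conventions for the natural equivalence. Once that is in place, the extension to all $n\geq0$ is automatic: one lifts the equality through an injective resolution of $M$ in $\Comodc_\Gamma$ and invokes the uniqueness of extensions of morphisms of universal $\delta$-functors. As an alternative route, one could establish the identity directly at the level of the two-sided cobar complexes computing $\Ext_\Gamma^*(A,M)$ and $\Ext_\Sigma^*(B,f^*M)$, where a natural equivalence $\tau$ supplies an explicit cochain homotopy between the cobar maps induced by $f$ and by $\tau^*\o g$.
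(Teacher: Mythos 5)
The paper does not prove this proposition at all: the text immediately preceding it reads ``We will require Devinatz's important Proposition~1.16,'' and the statement is simply quoted from Devinatz~\cite{Devinatz} as a black box, so there is no in-paper argument with which to compare your proposal.

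Taking your sketch on its own merits, the $\delta$-functor strategy is the standard and sensible one, but two points need tightening. First, your justification for the reduction to degree zero is misstated: what the universal-$\delta$-functor argument requires is that the \emph{source}, $\Ext_\Gamma^*(A,-)$, be a universal (effaceable) $\delta$-functor on $\Comodc_\Gamma$ — which it is, being a right derived functor in a category with enough injectives — not that the target $\Ext_\Sigma^*(B,f^*(-))$ be effaceable. Second, and more seriously, for $\Ext_\Sigma^*(B,f^*(-))$ and $\Ext_\Sigma^*(B,g^*(-))$ even to be $\delta$-functors on $\Comodc_\Gamma$, and for $H^*(\tau^*)$ to commute with the connecting maps, you need $f^*$ and $g^*$ to carry short exact sequences of continuous $\Gamma$-comodules to short exact sequences of continuous $\Sigma$-comodules; in the completed-tensor-product setting this exactness is not automatic and ought to be flagged (in Devinatz's framework it is arranged by hypotheses on the morphisms of Hopf algebroids). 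Finally, the degree-zero verification — the entire substance of the statement — is described only as a diagram chase ``falling out'' of the intertwining identity; as written this is a plan, not a proof, and is exactly the place where Devinatz's precise Definition~1.14 of natural equivalence must actually be unwound. Your alternative route via an explicit cochain homotopy on the cobar complex is in fact closer to how such statements are usually established in this literature, and would avoid the exactness concerns above.
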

Using Devinatz's notion of \emph{equivalence} of two complete Hopf algebroids
we can deduce
\begin{cor}
\label{cor:EqceGpdsExtIso}
If $f\:(A,\Gamma)\lra(B,\Sigma)$ is an equivalence of complete Hopf algebroids,
then
\[
H^*f\:\Ext_\Gamma^*(A,M)\lra\Ext_\Sigma^*(B,f^*M)
\]
is an isomorphism.
\end{cor}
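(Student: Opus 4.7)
The corollary should follow directly from Proposition~\ref{prop:EqceGpdsExtIso} by the standard categorical argument: an equivalence $f\colon (A,\Gamma)\lra (B,\Sigma)$ in the sense of Devinatz admits a quasi-inverse $g\colon (B,\Sigma)\lra (A,\Gamma)$ together with natural equivalences $\tau\colon gf\lra\Id_{(A,\Gamma)}$ and $\sigma\colon fg\lra\Id_{(B,\Sigma)}$. My plan is to apply Proposition~\ref{prop:EqceGpdsExtIso} to each of these natural equivalences and use the functoriality of $H^*$ under composition to conclude that $H^*f$ has both a left and a right inverse.

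In more detail, I would first unpack Devinatz's definition of equivalence to produce $g$, $\tau$ and $\sigma$. For any continuous $\Gamma$-comodule $M$, the natural equivalence $\tau$ of endomorphisms of $(A,\Gamma)$ induces an isomorphism $\tau^*\colon\Id^*M\lra(gf)^*M$ of complete $\Gamma$-comodules, and Proposition~\ref{prop:EqceGpdsExtIso} gives
\[
H^*(\tau^*)\o H^*(\Id)=H^*(gf)\:\Ext^*_\Gamma(A,M)\lra\Ext^*_\Gamma(A,(gf)^*M).
\]
Since $H^*(\Id)$ is the identity and $H^*(\tau^*)$ is induced by an isomorphism of comodules (hence an isomorphism on $\Ext$-groups), we conclude that $H^*(gf)$ is an isomorphism. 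The analogous argument applied to $\sigma$ shows that $H^*(fg)$ is an isomorphism.

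Next I would appeal to the functoriality of the change-of-rings map under composition of continuous Hopf algebroid morphisms, which (modulo the canonical identification $(gf)^*M\iso g^*f^*M$ in $\Comodc_\Gamma$) gives a factorization $H^*(gf)=H^*g\o H^*f$ and similarly $H^*(fg)=H^*f\o H^*g$. From these two factorizations $H^*f$ admits both a left and a right inverse and is therefore an isomorphism, as required. The only potentially non-routine step is verifying cleanly that the compositional factorization holds as equality (rather than only up to some further coherence isomorphism); this is essentially a matter of chasing definitions in~\cite[\S1]{Devinatz}, but it is the one place where one must be careful rather than formal.
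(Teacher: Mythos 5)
The paper states this corollary without giving a proof, treating it as an immediate consequence of Devinatz's notion of equivalence together with Proposition~\ref{prop:EqceGpdsExtIso}; your argument is precisely the standard categorical deduction that is being left implicit, so the approach is the right one. One small point of care, which you gesture at but do not quite resolve, concerns the final step: the instance of $H^*f$ occurring in the factorization $H^*(gf)=H^*g\o H^*f$ is evaluated on the $\Gamma$-comodule $M$, whereas the instance of $H^*f$ occurring in $H^*(fg)=H^*f\o H^*g$ (with $fg$ applied to, say, $f^*M$) is evaluated on $g^*f^*M$ rather than on $M$. Thus the slogan ``$H^*f$ has a left and a right inverse'' does not literally apply to a single map, and the naive two-sided-inverse argument needs a slight rearrangement. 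The clean version is: $H^*(gf)$ being an isomorphism makes $H^*f$ on $M$ a split monomorphism and $H^*g$ on $f^*M$ a split epimorphism; $H^*(fg)$ being an isomorphism on $f^*M$ makes $H^*g$ on $f^*M$ a split monomorphism; hence $H^*g$ on $f^*M$ is an isomorphism, and then $H^*f$ on $M$ equals $(H^*g)^{-1}\o H^*(gf)$, an isomorphism. You should also make explicit (though it is routine) that $\tau^*$ and $\sigma^*$ are isomorphisms of complete comodules, which follows from unwinding what ``natural \emph{equivalence}'' means in Devinatz's definition. With these adjustments the proposal is correct and fills in exactly the argument the paper intends.
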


All of the above can also be reworked with graded $\k$-modules and $\k$-algebras.
As Devinatz observes, when $\Gamma$ is concentrated in even degrees, this is
equivalent to introducing actions of the multiplicative group scheme $\Gm$ which
factor through the quotient scheme $\Gm/\mu_2$ where $\mu_d\subset\Gm$ is subscheme
of $d$-th roots of unity. This applies to the situations of interest to us and
indeed locally the actions of $\Gm$ factor through $\Gm/\mu_{2n}$ where may take
some of the values $n=2,4,6$. An alternative approach to this is to view a
$\Z$-graded module as a $\Z/2$-graded view module with action of $\Gm$, and we
prefer this approach. We then define an element $x$ in degree~$n$ to be of
\emph{weight} $\wt x=n/2$; this means that whenever $\alpha\in\Gm$,
\[
\alpha\.x=
\begin{cases}
\ph{-}\alpha^{\wt x}x&\text{if $n$ even},\\
-\alpha^{\wt x-1}x&\text{if $n$ odd}.
\end{cases}
\]

For a profinite groupoid $\Gpd$ and a commutative unital ring $\k$ with
the discrete topology, the pair of $\k$-algebras
\[
A=\Mapc(\Obj\Gpd,\k),\quad\Gamma=\Mapc(\Gpd,\k)
\]
form a complete topological Hopf algebroid $(A,\Gamma)$ with obvious
structure maps induced from $\Gpd$. A proper $\Gpd$-module $\ubar M$
with each $\ubar M(x)$ finitely generated over $\k$ is equivalent to
a topological $\Gamma$-comodule $M$, where
\[
M=\Sect(\Gpd;\ubar M)
\]
is a left $A$-module via
\[
(\alpha\.\Phi)(x)=\alpha(x)\Phi(x),
\]
and the coproduct $\psi\:M\lra\Gamma\oTimes_{A}M$ is determined on an
element $m\in M$ by an expression of the form
\[
\psi(m)=\sum_s\theta_s\otimes m_s,
\]
where $m_s\in M$ and $\theta_s\in\Gamma$ so that for each pair of
elements $x,y\in\Obj\Gpd$, $\theta_s$ restricts to a locally constant
function on $\Gpd(x,y)$.

\section{Connectivity of the category of supersingular isogenies}
\label{sec:Connectivity}

In this section we will show that the category of isogenies of
supersingular elliptic curves $\Isogss$ is connected in the sense
that there is a morphism between any given pair of objects.
\begin{thm}
\label{thm:ZetaFns}
Two elliptic curves $\mathcal E$, $\mathcal E'$ defined over a finite
field $\F_{p^d}$ are isogenous over $\F_{p^d}$ if and only if
$|\mathcal E(\F_{p^d})|=|\mathcal E'(\F_{p^d})|$.
\end{thm}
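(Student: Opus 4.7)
The plan is to encode the point count $|\mathcal E(\F_{p^d})|$ as the trace of Frobenius on an $\ell$-adic Tate module and then appeal to the Tate isogeny theorem (Theorem~\ref{thm:Tate2}) to pass from Galois-equivariant maps back to actual isogenies. Write $q=p^d$ and $a_q(\mathcal E)=q+1-|\mathcal E(\F_{q})|$. For a prime $\ell\neq p$, the $q$-power relative Frobenius $\Fr^{(d)}\in\End_{\F_q}\mathcal E$ acts on $\TM'_\ell\mathcal E$ with characteristic polynomial $T^2-a_q(\mathcal E)T+q$, since $|\mathcal E(\F_q)|=\deg(1-\Fr^{(d)})$ and degree equals the determinant on the Tate module.

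First I would handle the easy direction: if $\phi\:\mathcal E\lra\mathcal E'$ is an $\F_q$-isogeny, then by Theorem~\ref{thm:Tate2} $\phi$ induces an injection $\VM'_\ell\mathcal E\hookrightarrow\VM'_\ell\mathcal E'$ commuting with Frobenius; comparing dimensions, this is an isomorphism of Frobenius modules and the characteristic polynomials agree, so $a_q(\mathcal E)=a_q(\mathcal E')$.

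For the reverse direction, assume $|\mathcal E(\F_q)|=|\mathcal E'(\F_q)|$, so $a_q(\mathcal E)=a_q(\mathcal E')$. Then $\VM'_\ell\mathcal E$ and $\VM'_\ell\mathcal E'$ are two $2$-dimensional $\Q_\ell$-vector spaces on which Frobenius acts with identical characteristic polynomial. Invoking semisimplicity of the Frobenius action (the crux of the argument, see below), they are isomorphic as $\Q_\ell[\Fr^{(d)}]$-modules and hence as $\Gal(\Fpi/\F_q)$-modules since the Galois action is topologically generated by $\Fr^{(d)}$. Choose a Galois-equivariant isomorphism; scaling it we obtain a nonzero element of $\Hom_{\Gal(\Fpi/\F_q)}(\TM'_\ell\mathcal E,\TM'_\ell\mathcal E')$. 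By Theorem~\ref{thm:Tate2}, this lies in the image of $\Hom_{\F_q}(\mathcal E,\mathcal E')\otimes\Z_\ell$, which is therefore nonzero; consequently $\Hom_{\F_q}(\mathcal E,\mathcal E')\neq 0$, producing an isogeny.

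The main obstacle is the semisimplicity of Frobenius on $\VM'_\ell\mathcal E$ — without it two modules with matching characteristic polynomials need not be isomorphic. When the two eigenvalues of $\Fr^{(d)}$ are distinct, semisimplicity is automatic. When they coincide, one must rule out the possibility of a non-split Jordan block; the cleanest route is via the Rosati involution on $\End\mathcal E\otimes\Q$ and the positivity of the associated quadratic form, which forces the minimal polynomial of $\Fr^{(d)}$ to equal its characteristic polynomial only when the latter has distinct roots, and in the repeated-root case forces $\Fr^{(d)}$ to be a scalar. This is the content of Weil's theorem on semisimplicity of Frobenius for abelian varieties over finite fields, and it is the one ingredient that does not follow formally from the Tate module machinery already developed in Section~\ref{sec:TateMods}.
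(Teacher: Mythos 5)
Your proof is correct and is the standard argument via Tate's isogeny theorem. For the record, the paper itself does not give a proof of this statement but simply cites Husemoller (Chapter~3, Theorem~8.4); your proposal reconstructs the underlying argument from the Tate module machinery the paper develops in Section~\ref{sec:TateMods}, which makes the section more self-contained. Your identification of semisimplicity of Frobenius as the single ingredient that does not follow formally from Theorem~\ref{thm:Tate2} is accurate and is the genuine crux of the converse direction. One simplification worth noting for the elliptic curve case specifically: in the repeated-eigenvalue case you do not need the Rosati-positivity argument. Since $\End_{\F_{p^d}}\mathcal E\otimes\Q$ is always a division algebra for an elliptic curve over a finite field (it is $\Q$, an imaginary quadratic field, or a definite quaternion algebra), it has no nonzero nilpotents; so if $(\Fr^{(d)}-\alpha)^2=0$ with $\alpha\in\Q$, then $\Fr^{(d)}=\alpha$ outright, and the Tate module is trivially semisimple. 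The Rosati-positivity route is the right one for higher-dimensional abelian varieties, where the endomorphism algebra need not be a division algebra, but for elliptic curves the division-algebra structure gives semisimplicity for free.
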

\begin{proof}
See \cite[Chapter 3 Theorem 8.4]{Husemoller}.
\end{proof}
\begin{thm}
\label{thm:ZetaFnN1}
An elliptic curve $\mathcal E$ defined over a finite field $\F_{p^d}$
satisfies
\begin{itemize}
\item
$|\mathcal E(\F_{p^d})|=1+p^d$ if $d$ is odd;
\item
$|\mathcal E(\F_{p^{2m}})|=(1\pm p^m)^2$ if
$\End_{\Fpi}\mathcal E=\End_{\F_{p^d}}\mathcal E$.
\end{itemize}
\end{thm}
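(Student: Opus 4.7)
The plan is to use the Lefschetz-type identity $|\mathcal E(\F_{p^d})| = 1 + p^d - a$, where $a = \Tr(\pi_d)$ is the trace of the relative Frobenius endomorphism $\pi_d = \Fr^d \in \End_{\F_{p^d}} \mathcal E$; by general theory $\pi_d$ satisfies $\pi_d \hat\pi_d = p^d$ and $\pi_d^2 - a\pi_d + p^d = 0$. Each bullet of the theorem will then reduce to pinning down the integer~$a$.

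For the first bullet, I would iterate Proposition~\ref{prop:Fr2-p}: the factorization $\Fr^2 = \lambda \circ [p]_{\mathcal E}$ with $\lambda$ an isomorphism gives, after $d$-fold iteration and using the fact that $[p]$ commutes with every morphism, $\pi_d^2 = \Fr^{2d} = u \circ [p^d]_{\mathcal E}$ for some $u \in \Aut \mathcal E$. Combining with the characteristic equation produces $a\pi_d = (1+u)\,p^d$ inside $\End\mathcal E$; taking reduced norms in the quaternion algebra $\End\mathcal E \otimes \Q$ from Proposition~\ref{prop:EndEpCompletion} then yields $a^2 = \mathrm{Nm}(1+u)\, p^d$, so $v_p(a) \geq d/2$. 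For $d$ odd and $p \geq 5$, the resulting divisibility $p^{(d+1)/2} \mid a$ is incompatible with the Hasse bound $|a| \leq 2\sqrt{p^d}$ unless $a = 0$, giving $|\mathcal E(\F_{p^d})| = 1 + p^d$.

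For the second bullet, the hypothesis $\End_{\Fpi}\mathcal E = \End_{\F_{p^d}} \mathcal E$ says that $\pi_d$ commutes with every geometric endomorphism of $\mathcal E$. By Proposition~\ref{prop:EndEpCompletion}, the center of the quaternion division algebra $\End_{\Fpi}\mathcal E \otimes \Q$ is $\Q$, so $\pi_d \in \Z$. Then $\pi_d = \hat\pi_d$ together with $\pi_d \hat\pi_d = p^d$ forces $\pi_d^2 = p^d$, hence $\pi_d = \pm p^{d/2}$; in particular $d = 2m$ must be even, $a = 2\pi_d = \pm 2 p^m$, and substitution gives $|\mathcal E(\F_{p^{2m}})| = 1 + p^{2m} \mp 2p^m = (1 \mp p^m)^2 = (1 \pm p^m)^2$, as claimed.

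I expect the main obstacle to be controlling the automorphism ambiguity $u$ in the first bullet. When $j(\mathcal E) \notin \{0, 1728\}$ Lemma~\ref{lem:InceFGL} gives $\Aut \mathcal E = \{\pm 1\}$, so only $u = -1$ is compatible with integrality of $a$ when $d$ is odd, immediately yielding $a = 0$; for the exceptional $j$-invariants the automorphism groups of order $4$ and $6$ admit further values of $\mathrm{Nm}(1+u)$ which must be analysed individually to verify that none produces an integer $a$ within the Hasse range when $d$ is odd and $p > 3$. An alternative that by-passes the case split is to invoke the Newton polygon of Frobenius on the Dieudonn\'e module $\TM_p \mathcal E$ from Section~\ref{sec:TateMods}, which forces both slopes to equal $d/2$ and hence $v_p(a) \geq d/2$ uniformly for any supersingular curve.
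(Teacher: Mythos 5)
Your argument is correct and supplies an actual proof where the paper offers none: the paper's proof consists solely of citing Waterhouse and R\"uck for the list of possible orders $|\mathcal E(\F_{p^d})|$, so any self-contained derivation is necessarily a different route. Your reconstruction via the characteristic polynomial $\pi_d^2 - a\pi_d + p^d = 0$ of the $\F_{p^d}$-Frobenius, the identity $\Fr^{2d} = u\circ[p^d]$ obtained by iterating Proposition~\ref{prop:Fr2-p}, and the reduced-norm computation $a^2 = \mathrm{Nm}(1+u)\,p^d$ gives exactly the divisibility $v_p(a)\geq d/2$ that, combined with the Hasse bound, forces $a=0$ when $d$ is odd and $p>3$; and the second bullet follows cleanly from centrality of $\pi_d$ in the quaternion order, giving $\pi_d\in\Z$, $\pi_d^2 = p^d$, hence $d=2m$ and $a=\pm 2p^m$. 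Both computations are sound. Two remarks worth making explicit: (i) the theorem as printed omits the word \emph{supersingular}, without which the statement is false (an ordinary curve over $\F_p$ need not have $p+1$ points); you correctly read in this hypothesis, which is forced by the surrounding context and by your own use of Proposition~\ref{prop:Fr2-p}, Proposition~\ref{prop:EndEpCompletion}, and the Newton-polygon alternative, all of which presuppose supersingularity; and (ii) the appeal to the Hasse bound $|a|\leq 2p^{d/2}$ is not proved in the paper but is standard and available in the cited reference \cite{Sil}, so the argument is not circular. Compared with simply citing Waterhouse, your approach has the advantage of being self-contained and of making transparent exactly where the hypotheses $d$ odd and $\End_{\Fpi}\mathcal E=\End_{\F_{p^d}}\mathcal E$ enter, at the cost of the case analysis over $\Aut\mathcal E$ you flag at the end (which, as you note, can be avoided entirely by the slope argument on $\TM_p\mathcal E$, or by invoking the standard fact that supersingularity over $\F_q$ is equivalent to $p\mid a$).
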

\begin{proof}
The list of all possible orders of $|\mathcal E(\F_{p^d})|$
appears in~\cite{Waterhouse}, while~\cite{Ruck} gives a
complete list of the actual groups $\mathcal E(\F_{p^d})$
that can occur.
\end{proof}
\begin{thm}
\label{thm:ConnIsogss}
The isogeny categories $\Isogss$ and $\tIsogss$ are connected.
\end{thm}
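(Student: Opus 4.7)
The plan is to reduce connectivity of $\Isogss$ to Theorem~\ref{thm:ZetaFns} by finding a common finite field over which the two curves have matching point counts, and then deduce connectivity of $\tIsogss$ from Theorem~\ref{thm:Tate}.

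Given supersingular curves $\mathcal E$ and $\mathcal E'$ over $\Fpi$, I would first use Proposition~\ref{prop:FGL-0-GalInce} (supersingular $j$-invariants lie in $\F_{p^2}$) together with Theorem~\ref{thm:Standard WeierstarssForm} to replace both curves, up to $\Fpi$-isomorphism, by Weierstra\ss{} models with coefficients in $\F_{p^2}$. Since any $\Fpi$-isomorphism is itself an isogeny, this replacement does not affect the statement. I would then choose $m$ large enough that $\End_{\F_{p^{2m}}}\mathcal E = \End_{\Fpi}\mathcal E$ and likewise for $\mathcal E'$, which is possible because these endomorphism rings are finitely generated over $\Z$.

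The key computation is then the point count. By the second bullet of Theorem~\ref{thm:ZetaFnN1}, $|\mathcal E(\F_{p^{2m}})| = (1 \pm p^m)^2$ and $|\mathcal E'(\F_{p^{2m}})| = (1 \pm p^m)^2$, but a priori with different signs. The characteristic polynomial of the relative Frobenius $\Fr^{(2m)}$ must therefore be $(T \mp p^m)^2$ in either case, so the characteristic polynomial of $\Fr^{(4m)} = (\Fr^{(2m)})^2$ is $(T - p^{2m})^2$ irrespective of the original sign. Base-changing to $\F_{p^{4m}}$ thus yields $|\mathcal E(\F_{p^{4m}})| = (1 - p^{2m})^2 = |\mathcal E'(\F_{p^{4m}})|$, and Theorem~\ref{thm:ZetaFns} supplies an isogeny $\mathcal E \to \mathcal E'$ over $\F_{p^{4m}}$, a fortiori over $\Fpi$. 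This settles connectivity of $\Isogss$.

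For $\tIsogss$, any isogeny $\phi\:\mathcal E \to \mathcal E'$ induces via Theorem~\ref{thm:Tate} a nonzero element of $\Hom_{\D_{\Fpi}}(\TM_p\mathcal E', \TM_p\mathcal E)$, which is by definition a morphism in $\tIsogss(\mathcal E, \mathcal E')$; connectivity of $\tIsogss$ follows. The main obstacle I expect is the sign mismatch $(1 \pm p^m)^2$ in the point counts over $\F_{p^{2m}}$. Its resolution by the Frobenius-squaring trick over $\F_{p^{4m}}$ is the substantive step; one cannot simply invoke the cleaner first bullet of Theorem~\ref{thm:ZetaFnN1}, because supersingular curves, being naturally defined over $\F_{p^2}$, cannot be placed over any odd-degree extension of $\F_p$.
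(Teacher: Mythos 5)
Your proof is correct, and it diverges from the paper's at the crux of the argument: how to handle the sign mismatch in the point counts $(1\pm p^m)^2$ over $\F_{p^{2m}}$. The paper resolves this by an explicit quadratic twist: it picks a non-residue $u\in\F_{p^{2m}}$, forms $\mathcal E^u$, and counts points directly to show $|\mathcal E^u(\F_{p^{2m}})|$ has the opposite sign from $|\mathcal E(\F_{p^{2m}})|$, so that $\mathcal E^u$ and $\mathcal E'$ are isogenous over $\F_{p^{2m}}$, while $\mathcal E^u\iso\mathcal E$ over $\F_{p^{4m}}$. You instead pass directly to $\F_{p^{4m}}$ and observe that, since the relative Frobenius over $\F_{p^{2m}}$ has characteristic polynomial $(T\mp p^m)^2$, its square has characteristic polynomial $(T-p^{2m})^2$ regardless of the sign, forcing both point counts over $\F_{p^{4m}}$ to equal $(p^{2m}-1)^2$. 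Both arguments end up producing an isogeny over $\F_{p^{4m}}$. Yours is arguably cleaner, avoiding the residue/non-residue point count, whereas the paper's twist argument carries some extra explicit information about which curve in the $\Fpi$-isomorphism class of $\mathcal E$ is isogenous to $\mathcal E'$ already over the smaller field $\F_{p^{2m}}$ (the author even comments on this at the end of the proof). One small quibble: the fact that supersingular $j$-invariants lie in $\F_{p^2}$ is stated just before Proposition~\ref{prop:FGL-0-GalInce}, not in it; and your closing aside that supersingular curves ``cannot be placed over any odd-degree extension of $\F_p$'' is not quite right (some can be defined over $\F_p$ itself, cf.\ Theorem~\ref{thm:ssFp-existence}), though this side remark does not affect the argument.
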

\begin{proof}
We will show that any two supersingular elliptic curves
$\mathcal E$, $\mathcal E'$ defined over $\Fpi$ are isogenous.
We may assume that $\mathcal E$ and $\mathcal E'$ are
both defined over some finite field and then by Theorems
\ref{thm:ZetaFns} and \ref{thm:ZetaFnN1} we need only
show that they become isogenous over some larger finite
field. We begin by enlarging the common field of definition
to $\F_{p^{2m}}$ for which
\[
\End_{\Fpi}\mathcal E=\End_{\F_{p^{2m}}}\mathcal E
,\quad
\End_{\Fpi}\mathcal E'=\End_{\F_{p^{2m}}}\mathcal E'.
\]
Thus $|\mathcal E(\F_{p^{2m}})|$ and $|\mathcal E'(\F_{p^{2m}})|$
both have the form $(1\pm p^m)^2$. If these are equal
then the curves are isogenous over $\F_{p^{2m}}$.
Otherwise we may assume that
\[
|\mathcal E(\F_{p^{2m}})|=1+2p^m+p^{2m}
,\quad
|\mathcal E'(\F_{p^{2m}})|=1-2p^m+p^{2m}.
\]
If the Weierstra\ss{} equation of $\mathcal E$ is
\[
\mathcal E\:y^2=4x^3-ax-b,
\]
taking a quadratic non-residue $u$ in $\F_{p^{2m}}$
allows us to define a twisted curve by
\[
\mathcal E^u\:y^2=4x^3-u^2ax-u^3b,
\]
which becomes isomorphic to $\mathcal E$ over
$\F_{p^{4m}}$. If
\begin{align*}
N_0&=|\{t\in\F_{p^{2m}}:4t^3-at-b=0\}|,\\
N_1&=
|\{t\in\F_{p^{2m}}:
\text{$4t^3-at-b\neq0$ is a quadratic residue}\}|,
\end{align*}
then
\[
1+N_0+2N_1=1+2p^m+p^{2m}.
\]
But as
\[
4x^3-u^2ax-u^3b=u^3(4(u^{-1}x)^3-a(u^{-1}x)-b),
\]
we find that
\begin{align*}
|\mathcal E^u(\F_{p^{2m}})|&=1+N_0+2(p^{2m}-N_0-N_1)\\
&=1-N_0-2N_1+2p^{2m}\\
&=1-2p^m+p^{2m}.
\end{align*}
Hence,
\[
|\mathcal E'(\F_{p^{2m}})|=|\mathcal E^u(\F_{p^{2m}})|
\]
and so these are isogenous curves over $\F_{p^{2m}}$,
implying that $\mathcal E'$ is isogenous to $\mathcal E$
over $\F_{p^{2m}}$.

We could have also used the fact $j(\mathcal E^u)=j(\mathcal E)$
to obtain an isomorphism $\mathcal E^u\iso\mathcal E$ over
$\Fpi$, but the argument given is more explicit about the
field of definition of such an isomorphism.

The connectivity of $\tIsogss$ now follows from Tate's
Theorem~\ref{thm:Tate}.
\end{proof}
\begin{cor}
\label{cor:ConnIsogss}
The groupoids $\Isogssu$ and $\tIsogssu$ are connected.
\end{cor}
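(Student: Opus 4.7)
The plan is to deduce this directly from Theorem~\ref{thm:ConnIsogss}, using the fact that the localized categories $\Isogssu$ and $\tIsogssu$ are obtained from $\Isogss$ and $\tIsogss$ (respectively) by inverting certain morphisms without changing the object sets. Since connectivity is a statement purely about existence of morphisms between pairs of objects, and localization can only add morphisms, connectivity is preserved.

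More concretely, I would argue as follows. Given two supersingular oriented elliptic curves $\UB{\mathcal E_1}$ and $\UB{\mathcal E_2}$ over $\Fpi$, Theorem~\ref{thm:ConnIsogss} supplies an isogeny $\phi\:\mathcal E_1\lra\mathcal E_2$ in $\Isogss$. Its image under the localization functor $\Isogss\lra\Isogssu$ is a morphism between the corresponding objects of $\Isogssu$; in fact, it is now an isomorphism since localization forces every $[n]_{\mathcal E}$ invertible, hence every nonzero isogeny becomes invertible (as noted in Section~\ref{sec:CatIsogFinFlds}). The same argument works verbatim for $\tIsogssu$ from the connectivity of $\tIsogss$.

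The only point worth spelling out is the assertion that the object sets of the localized categories are unchanged, which is the case by the very definition of the localizations $\Isogssu$ and $\tIsogssu$ given in Sections~\ref{sec:CatIsogFinFlds} and~\ref{sec:Thickenings}: one only adjoins formal inverses to the morphisms $[n]_{\mathcal E}$, leaving $\Obj$ alone. There is essentially no obstacle here; the substantive work was done in Theorem~\ref{thm:ConnIsogss} via the zeta-function argument and Tate's theorem, and the corollary is a two-line formal consequence.
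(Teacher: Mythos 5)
Your proposal is correct and matches the argument the paper leaves implicit: the localizations $\Isogssu$ and $\tIsogssu$ have the same object sets as $\Isogss$ and $\tIsogss$, and since the localization functors only add morphisms, the connectivity established in Theorem~\ref{thm:ConnIsogss} passes directly to the localized groupoids. Your extra remark that the image isogenies become isomorphisms is accurate (this is noted in Section~\ref{sec:CatIsogFinFlds}) but not actually needed — mere existence of a morphism suffices for connectivity.
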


The following deeper fact about supersingular curves over finite
fields, which is a consequence of Theorem~\ref{thm:ssFp}, allows
us to show the connectivity of $\tSepIsogssu$.
\begin{thm}
\label{thm:ssFp-existence}
For any prime $p>3$, there is a supersingular elliptic curve
$\mathcal E_0$ defined over $\F_p$. If $p>11$, this can be
chosen to satisfy $j(\mathcal E)\not\equiv0,1728\modp p$.
\end{thm}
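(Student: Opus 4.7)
The plan is to realise the supersingular curve over $\F_p$ as the reduction of a CM elliptic curve defined over $\Q$. Recall that an elliptic curve over $\bar\Q$ with CM by the ring of integers of an imaginary quadratic field $K$ has $j$-invariant lying in the Hilbert class field of $K$, and reduces modulo a prime above $p$ to a supersingular curve precisely when $p$ does not split in $K$; if in addition $K$ has class number one, then $j\in\Z$, so the reduction is automatically defined over $\F_p$.

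First I would dispose of two easy congruence classes explicitly. For $p\equiv3\modp{4}$, the curve $y^2=x^3-x$ has CM by $\Z[\sqrt{-1}]$ with $j=1728$, and since $p$ is inert in $\Q(\sqrt{-1})$ the reduction is supersingular over $\F_p$. For $p\equiv2\modp{3}$, the curve $y^2=x^3-1$ has CM by $\Z[\zeta_3]$ with $j=0$, and since $p$ is inert in $\Q(\sqrt{-3})$ the reduction is again supersingular over $\F_p$. Together these cover every prime $p>3$ except those with $p\equiv1\modp{12}$.

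For the residual case $p\equiv1\modp{12}$, neither of the special $j$-invariants $0$ or $1728$ is supersingular, and a pigeonhole argument via quadratic reciprocity shows that no fixed finite list of class-number-one imaginary quadratic fields will handle every such $p$. The cleanest route is to invoke Deuring's theorem (as refined by Waterhouse): the number of $\F_p$-isomorphism classes of supersingular elliptic curves over $\F_p$ equals the Hurwitz class number $H(-4p)$, which is positive for every $p>3$. I expect this to be the main obstacle, as the result is not elementary and rests on Deuring's lifting theorem together with the associated class number computation rather than on explicit congruence manipulations.

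Finally, for the refinement $j(\mathcal E_0)\not\equiv0,1728\modp{p}$ when $p>11$, I would count: among the $H(-4p)$ $\F_p$-isomorphism classes of supersingular curves at most a bounded number (coming from quadratic twists of the $j=0$ and $j=1728$ curves) have $j$ equal to $0$ or $1728$, and only when the prescribed congruence conditions on $p\bmod 12$ hold. Since $H(-4p)$ grows with $p$, a direct verification for the small residual primes $p=13,17,19,23$ together with monotonicity of the class number then yields the strict inequality needed to produce the required $j$-invariant in $\F_p\setminus\{0,1728\}$ for all $p>11$.
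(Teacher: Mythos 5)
Your strategy is recognizably the same one the paper relies on: dispatch the congruence classes $p\not\equiv1\pmod{12}$ via the CM curves with $j=0$ or $j=1728$, and appeal to Deuring's theorem (reduction of a curve with complex multiplication by the ring of integers of $\Q(\sqrt{-p})$) for the rest. The paper phrases the first part in terms of the Hasse invariant $A$ having $Q$ or $R$ as a factor, but this is equivalent to your observation that $y^2=x^3-x$ resp.\ $y^2=x^3-1$ is supersingular when $p\equiv3\pmod4$ resp.\ $p\equiv2\pmod3$. The paper then simply cites Deuring (via its Theorem~\ref{thm:ssFp} and the references \cite{modforms}, \cite{Cox}) for the harder claim that for $p>11$ a supersingular curve over $\F_p$ can be taken with $j\not\equiv0,1728\pmod p$ and containing $\mathcal O_K\subset\End\mathcal E$; so up to this step the two treatments agree.

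Where your argument breaks is in the final counting step. You assert that ``$H(-4p)$ grows with $p$'' and invoke ``monotonicity of the class number'' to pass from the small-prime verification to all $p>11$. Class numbers are emphatically \emph{not} monotone: $h(-19)=h(-43)=h(-67)=h(-163)=1$ while $h(-23)=h(-31)=3$, and the Heegner list tells you that $h(-p)=1$ for $p=163$, far past your checked range. Siegel's ineffective $h(-d)\gg d^{1/2-\epsilon}$ is the only general growth statement available, and it provides no explicit threshold at any individual prime. To make a counting argument honest you would instead need the exact Deuring-Eichler correspondence between supersingular $j$-invariants in $\F_p$ and (weighted) ideal classes of the orders $\Z[\sqrt{-p}]$ and $\mathcal O_K$, together with a case analysis on $p\bmod4$ giving an unconditional lower bound of the form $H(4p)\geq2$ (which does hold because for $p\equiv3\pmod4$ one has $H(4p)=h(-4p)+h(-p)\geq2$, and for $p\equiv1\pmod4$ the discriminant $-4p$ is never on the class-number-one list once $p>3$), and then check that after removing the contributions of $j\equiv0,1728$ at least one class remains. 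That is exactly the content of the Deuring result the paper cites, not something that falls out of class-number growth. As written your proposal does not close this gap.
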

\begin{prop}\label{prop:tSIsogssu}
The separable isogeny categories $\SepIsogssu$ and $\tSepIsogssu$
are connected as are the associated categories of isogenies
of oriented elliptic curves $\oSepIsogssu$ and $\toSepIsogssu$.
\end{prop}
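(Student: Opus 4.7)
The plan is to use Theorem~\ref{thm:ssFp-existence} to fix a basepoint: let $\mathcal E_0$ be a supersingular elliptic curve defined over $\F_p$. The point of insisting on $\F_p$-rationality is that $\mathcal E_0^{(1/p^k)} = \mathcal E_0$ for every $k \geq 0$, so the Frobenius twists which arise in Proposition~\ref{prop:IsogFactors} do not move us off the basepoint.

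First I would show that every supersingular elliptic curve $\mathcal E$ over $\Fpi$ admits a separable isogeny $\mathcal E \to \mathcal E_0$. By the connectedness of $\Isogss$ (Theorem~\ref{thm:ConnIsogss}) there is some isogeny $\phi \colon \mathcal E \to \mathcal E_0$, and Proposition~\ref{prop:IsogFactors} provides a unique factorization $\phi = \Fr^k \circ \phi_s$ with $\phi_s \colon \mathcal E \to \mathcal E_0^{(1/p^k)}$ separable. By the choice of $\mathcal E_0$, the target is again $\mathcal E_0$, so $\phi_s$ itself is a separable isogeny $\mathcal E \to \mathcal E_0$ and hence a morphism in $\SepIsogss \subseteq \SepIsogssu$.

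To upgrade this to full connectedness of $\SepIsogssu$, I would argue that $\SepIsogssu$ is in fact a groupoid. For supersingular $\mathcal E$ the $p^k$-torsion $\mathcal E[p^k]$ is local-local, so the only étale finite subgroup schemes of $\mathcal E$ have order coprime to $p$. Hence every separable isogeny between supersingular curves has $\deg \phi$ prime to $p$; the dual $\hat\phi$ is then also separable (since $\hat\phi \circ \phi = [\deg\phi]$ is separable and $\phi$ is already separable), and $[\deg\phi]^{-1}$ is available in $\SepIsogssu$, so $\phi^{-1} = [\deg\phi]^{-1} \circ \hat\phi$ lies in $\SepIsogssu$. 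Composing $\phi_s$ with the inverse of an analogous morphism $\mathcal E' \to \mathcal E_0$ then exhibits any two supersingular curves as isomorphic in $\SepIsogssu$.

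The remaining three assertions should follow formally. In $\oSepIsogssu$ a morphism is a separable isogeny of underlying curves together with a twisting scalar, and the twisting morphisms $\UB{\lambda}$ always lie in the category, so connectedness passes from $\SepIsogssu$ to $\oSepIsogssu$. For the thickened categories, Theorem~\ref{thm:TateGeneral} identifies morphism sets with the profinite spaces $\IHom_{\D_{\Fpi}}(\TM_p\mathcal E_2, \TM_p\mathcal E_1)$ in which the images of $\SepIsogssu$ and $\oSepIsogssu$ are dense; in particular a non-empty morphism set at the unthickened level forces a non-empty morphism set at the thickened level, so connectedness passes up. The real obstacle is concentrated in the opening step: without the $\F_p$-rational basepoint of Theorem~\ref{thm:ssFp-existence}, the separable factor $\phi_s$ would land at a distinct Frobenius twist of the target, and there is no way to bridge that gap inside $\SepIsogss$ since Frobenius itself is inseparable.
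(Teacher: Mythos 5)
Your argument is essentially the paper's: you use the $\F_p$-rational basepoint from Theorem~\ref{thm:ssFp-existence} so that the Frobenius factorization of Proposition~\ref{prop:IsogFactors} yields a separable isogeny $\phi_s\colon\mathcal E\to\mathcal E_0^{(1/p^k)}=\mathcal E_0$, then pass to the thickened categories via Tate's Theorem~\ref{thm:Tate} and to the oriented versions by twisting. The only addition is your explicit verification that $\SepIsogssu$ is a groupoid (separable isogenies between supersingular curves have degree prime to $p$, so duals are separable and become invertible after localizing the $[n]$'s); the paper takes this for granted from its Section~\ref{sec:CatIsogFinFlds} construction, but your check is correct and worth making.
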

\begin{proof}
Choose a supersingular curve $\mathcal E_0$ defined over $\F_p$
as in Theorem~\ref{thm:ssFp-existence}. For each supersingular
curve $\mathcal E$ defined over $\Fpi$ there is an isogeny
$\phi\:\mathcal E\lra\mathcal E_0$. By Proposition~\ref{prop:IsogFactors},
there is a factorization
\[
\phi=\Fr^k\o\phi_s,
\]
where $\phi_s\:\mathcal E\lra\mathcal E_0^{(1/p^k)}$ is separable.
But $\mathcal E_0^{(1/p^k)}=\mathcal E_0$ since $\mathcal E_0$ is
defined over $\F_p$, hence $\phi_s\:\mathcal E\lra\mathcal E_0$ is
a separable isogeny connecting $\mathcal E$ to $\mathcal E_0$. Thus
$\SepIsogssu$ is connected.

Now Tate's Theorem~\ref{thm:Tate} implies that $\tSepIsogssu$ is
connected.

The results for $\oSepIsogssu$ and $\toSepIsogssu$ follow by twisting.
\end{proof}

These results have immediate implications for the cohomology of the
groupoids $\Isogssu$ and $\SepIsogssu$, however, for our purposes with
$\toSepIsogssu$ we need to take the topological structure into account
and consider an appropriate continuous cohomology. We will discuss this
further in the following sections.

\section{Splittings of a quotient of the supersingular category of isogenies}
\label{sec:Splittings}

In this section we introduce some quotient categories of $\toSepIsogssu$.
The first is perhaps more `geometric', while the second is a `$p$-typical'
approximation.

Our first quotient category is $\catC=\toSepIsogssu/\AUT$, where $\AUT$
denotes the automorphism subgroupoid scheme of $\toSepIsogssu$ which is
defined by taking the collection of automorphism groups of all the objects
of $\toSepIsogssu$,
\[
\AUT=\coprod_{(\mathcal E,\omega)}\Aut\mathcal E.
\]
Notice that the automorphism group of $(\mathcal E,\omega)$ only depends
on $\mathcal E$ and so we can safely write $\Aut\mathcal E$ for this. The
objects of $\catC$ are the objects of $\tSepIsogssu$, whereas the morphism
sets are double cosets of the form
\[
\catC((\mathcal E_1,\omega_1),(\mathcal E_2,\omega_2))=
\Aut\mathcal E_2
\backslash\toSepIsogssu((\mathcal E_1,\omega_1),
(\mathcal E_2,\omega_2))/\Aut\mathcal E_1.
\]
If we denote the twisting automorphism corresponding to
$t\in\Aut\mathcal E\subset\Fpi^\times$ by
$\tau_t\:\mathcal E\lra\mathcal E^{t^2}$, where
\[
\tau_t(x,y)=(t^2x,t^3y),
\]
then an element of $\catC((\mathcal E_1,\omega_1),(\mathcal E_2,\omega_2))$
is an equivalence class of morphisms in $\toSepIsogssu$ of the form
\[
\tau_t\o\phi\o\tau_{s^{-1}}
\quad(s\in\Aut(\mathcal E_1),t\in\Aut(\mathcal E_2)),
\]
for some fixed morphism
$\phi\:(\mathcal E_1,\omega_1)\lra(\mathcal E_2,\omega_2)$.

Our second quotient category is $\catC_0=\tSepIsogssu/\mubf_{p^2-1}$,
where $\mubf_{p^2-1}$ denotes the \'etale subgroupoid scheme of
$\toSepIsogssu$ generated by all twistings by elements in the kernel
of the $(p^2-1)$-power map $\Gm\lra\Gm$, whose points over $\Fpi$ form
the group
\[
\mu_{p^2-1}(\Fpi)=\{t\in\Fpi^\times:t^{p^2-1}=1\}.
\]
Notice that $\AUT$ is a subgroupoid scheme of $\mubf_{p^2-1}$. Objects
of $\catC_0$ are equivalence classes $[\mathcal E,\omega]$ of objects of
$\toSepIsogssu$, and the morphism set
$\catC_0([\mathcal E_1,\omega_1],[\mathcal E_2,\omega_2])$ is a double
coset of the form
\[
\mu_{p^2-1}\backslash\toSepIsogssu((\mathcal E_1,\omega_1),
(\mathcal E_2,\omega_2))/\mu_{p^2-1}
\]
which is the equivalence class consisting of morphisms in $\toSepIsogssu$
of the form
\[
\tau_t(x,y)=(t^2x,t^3y),
\]
and an element of $\catC((\mathcal E_1,\omega_1),(\mathcal E_2,\omega_2))$
is an equivalence class of morphisms in $\toSepIsogssu$ of the form
\[
\tau_t\o\phi\o\tau_{s^{-1}}
\quad(s,t\in\mu_{p^2-1}),
\]
for some fixed morphism
$\phi\:(\mathcal E_1,\omega_1)\lra(\mathcal E_2,\omega_2)$.

The set of objects in $\catC_0$ is represented by the invariant
subring
\[
\ssEll_*[u,u^{-1}]^{\mu_{p^2-1}}\subset\ssEll_*[u,u^{-1}],
\]
where the action of $\mu_{p^2-1}$ is given by
\[
t\.xu^n=t^{d+n}x\quad(x\in\ssEll_{2d},\;t\in\mu_{p^2-1}(\Fpi)).
\]
Furthermore, the set of morphisms of $\catC_0$ is represented by
the algebra
\[
\ssGamma^{\mu_{p^2-1}}_*=
\ssEll_*[u,u^{-1}]^{\mu_{p^2-1}}\oTimes_{\epsilon}\ssGamma_*
\oTimes_{\epsilon}\ssEll_*[u,u^{-1}]^{\mu_{p^2-1}},
\]
where the tensor products are formed using the idempotent ring
homomorphism
\[
\epsilon\:\ssEll_*[u,u^{-1}]\lra\ssEll_*
\]
obtained by averaging over the action of $\mu_{p^2-1}$ whose
image is $\ssEll_*[u,u^{-1}]^{\mu_{p^2-1}}$.
\begin{thm}\label{thm:GammaMuReps}
There is a natural isomorphism of groupoids with $\Gm$-action,
\[
\Spec_{\Fpi}\Fpi\otimes\ssGamma^{\mu_{p^2-1}}_*\iso\catC_0.
\]
Moreover, $\Fpi\otimes\ssGamma^{\mu_{p^2-1}}_{2n}$ can be identified with
the set of continuous functions $\catC_0\lra\Fpi$ of weight~$n$ and
$\ssGamma^{\mu_{p^2-1}}_{2n}\subset\Fpi\otimes\ssGamma^{\mu_{p^2-1}}_{2n}$
with the subset of Galois invariant functions.

The natural morphism of topological groupoids
$\tilde\epsilon\:\toSepIsogssu\lra\catC_0$ is induced by the natural morphism
of Hopf algebroids
$\epsilon\:(\ssEll_*[u,u^{-1}]^{\mu_{p^2-1}},
\ssGamma^{\mu_{p^2-1}}_*)\lra(\ssEll_*,\ssGamma_*)$
under which $u$ goes to~$1$. Furthermore, $\tilde\epsilon$ is an equivalence
of topological groupoids.
\end{thm}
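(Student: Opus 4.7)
The plan is to deduce this result from Theorem~\ref{thm:Rep-tIsogssu} by taking $\mu_{p^2-1}$-invariants on both the algebraic and geometric sides and matching them up. Since $|\mu_{p^2-1}(\Fpi)|=p^2-1$ is coprime to $p$, averaging over $\mu_{p^2-1}$ gives a well-defined idempotent $\Fpi$-algebra endomorphism $\epsilon$ of $\ssEll_*[u,u^{-1}]$ whose image is the invariant subring $\ssEll_*[u,u^{-1}]^{\mu_{p^2-1}}$. The $\mu_{p^2-1}$-action extends to $\ssGamma_*$ via the left and right units together with the twisting action on $u$, and the double tensor product defining $\ssGamma_*^{\mu_{p^2-1}}$ is tailored exactly to enforce invariance on each side. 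I would first verify that this is a sub-Hopf-algebroid of $(\ssEll_*[u,u^{-1}],\ssGamma_*)$ and that tensoring with $\epsilon$ realises the quotient Hopf-algebroid map at the algebraic level.

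Next, Theorem~\ref{thm:Rep-tIsogssu} identifies $\Fpi\otimes\ssGamma_{2n}$ with the weight-$n$ continuous functions on $\toSepIsogssu$. Then an element of $\Fpi\otimes\ssGamma_{2n}^{\mu_{p^2-1}}$ corresponds to a weight-$n$ continuous function which is additionally $\mu_{p^2-1}$-invariant on both source and target, which is precisely the data of a continuous function of weight~$n$ on the double-coset morphism set of $\catC_0$. Since the Galois action and the $\mu_{p^2-1}$-action commute, the Galois invariance condition passes to the quotient unchanged, yielding the identification of $\ssGamma_{2n}^{\mu_{p^2-1}}$ with the Galois invariant subset. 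Taking $\Spec_{\Fpi}$ then produces the required isomorphism of groupoids with $\Gm$-action.

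Finally, to see that $\tilde\epsilon\:\toSepIsogssu\lra\catC_0$ is an equivalence of topological groupoids, I would observe that for every $t\in\mu_{p^2-1}(\Fpi)$ the twisting morphism $\tau_t\:(\mathcal E,\omega)\lra(\mathcal E^{t^2},t\omega)$ is already an isomorphism within $\toSepIsogssu$. Thus the objects collapsed by the quotient are already mutually isomorphic in the source, and the double coset identification $\phi\sim\tau_t\o\phi\o\tau_{s^{-1}}$ is exactly conjugation by these existing isomorphisms. This places $\tilde\epsilon$ in the standard situation of a quotient functor that is a categorical equivalence rather than a proper localization, with a quasi-inverse obtained by choosing a representative in each $\mu_{p^2-1}$-orbit.

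The main obstacle will be the topological bookkeeping. The morphism spaces in $\toSepIsogssu$ carry profinite topologies inherited from Tate modules via Theorem~\ref{thm:TateGeneral}, and one must check that quotienting by the finite \'etale group $\mu_{p^2-1}$ acting continuously on each side produces the correct profinite topology on $\catC_0$, so that the formal-scheme identification $\Specf_{\Fpi}\Fpi\otimes\ssGamma_*^{\mu_{p^2-1}}\iso\catC_0$ is valid in the continuous sense and so that the quasi-inverse can be chosen locally continuously. This should follow since $\mu_{p^2-1}$ is finite, whence $\Specf$ commutes with the $\mu_{p^2-1}$-quotient, and the machinery of complete Hopf algebroids described in Section~\ref{sec:CohTopHA} handles the passage to invariants uniformly.
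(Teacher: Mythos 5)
The paper gives no explicit proof of this theorem (it is stated and followed by a remark, then used in Section 9), so I'll assess your argument on its own merits. The reduction to Theorem~\ref{thm:Rep-tIsogssu} by taking $\mu_{p^2-1}$-invariants, the role of the averaging idempotent (using $p\nmid p^2-1$), and the observations about Galois invariance commuting with the $\mu_{p^2-1}$-action are all sound and in the right spirit.

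However, your justification that $\tilde\epsilon\:\toSepIsogssu\lra\catC_0$ is an equivalence has a genuine gap. You argue that the quotient is an equivalence ``because the twisting morphisms $\tau_t$ are already isomorphisms in $\toSepIsogssu$,'' so that the identified objects are already isomorphic in the source, and conclude that $\tilde\epsilon$ is in the ``standard situation of a quotient functor that is a categorical equivalence.'' This reasoning is not sufficient: take $\Gpd$ to be the one-object groupoid on a group $G$ and $\GpdN$ the normal subgroupoid on a nontrivial normal subgroup $N\ideal G$. Every morphism of $\GpdN$ is an isomorphism in $\Gpd$, yet $\Gpd\lra\Gpd/\GpdN$ is the quotient $G\lra G/N$, which fails to be faithful and hence is not an equivalence. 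What is actually needed, and what you do not verify, is that $\mubf_{p^2-1}$ acts \emph{freely} on the objects of $\toSepIsogssu$, i.e.\ that the normal subgroupoid $\mubf_{p^2-1}$ has trivial automorphism groups at every object. This is what collapses the double-coset relation \emph{within a fixed $\Hom$-set} to the identity relation and makes a choice of orbit representatives into a genuine quasi-inverse functor.

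The freeness does hold in this setting, and it is a consequence of the orientation data: $\tau_t\:(\mathcal E,\omega)\lra(\mathcal E^{t^2},t\omega)$, and $t\omega=\omega$ forces $t=1$. Equivalently, on the representing algebra the element $u$ is a unit and is sent to $tu$ under the action of $t$, so a $\Fpi$-point cannot be fixed by any $t\ne1$. (This is precisely why one works with $\ssEll_*[u,u^{-1}]$ rather than $\ssEll_*$, and it is the sense in which $\mubf_{p^2-1}$ being an \'etale subgroupoid scheme — as noted in the paper's remark following the theorem — encodes free action.) Once freeness is in hand, your description of the quasi-inverse is correct, and the continuity bookkeeping proceeds as you indicate since $\mu_{p^2-1}$ is a finite constant group scheme acting continuously and each orbit is a finite discrete set. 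You should make the freeness observation explicit before invoking the existence of a continuous section; without it, the argument as written does not establish that $\tilde\epsilon$ is an equivalence.
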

In the latter part of this result, the topological structure has to be taken
into account when discussing equivalences of groupoids, with all the relevant
maps required to be continuous. This fact will be used to prove some cohomological
results in Section~\ref{sec:EqcesHopfAlgbds}. Notice that $\mubf_{p^2-1}$ is
an \'etale group scheme and $\tilde\epsilon$ is an \'etale morphism.

By Proposition~\ref{prop:tSIsogssu}, $\toSepIsogssu$ is connected, hence so
are the quotient categories $\catC$ and $\catC_0$. The following stronger result
holds.
\begin{thm}\label{thm:TopConn-catC}
Let $\boldsymbol{\mathcal E}_0$ be an object of either of these categories.
Then there are continuous maps
$\sigma\:\catC\lra\Obj\catC$, $\sigma_0\:\catC_0\lra\Obj\catC_0$ for which
\[
\dom\sigma(\boldsymbol{\mathcal E})=\dom\sigma_0(\boldsymbol{\mathcal E})
=\boldsymbol{\mathcal E}_0,\quad
\codom\sigma(\boldsymbol{\mathcal E})=\codom\sigma_0(\boldsymbol{\mathcal E})
=\boldsymbol{\mathcal E}.
\]
Hence there are splittings of topological categories
\[
\catC
\iso\Obj\catC\rtimes\Aut_{\catC}\boldsymbol{\mathcal E}_0,
\quad
\catC_0
\iso\Obj\catC_0\rtimes\Aut_{\catC_0}\boldsymbol{\mathcal E}_0.
\]
\end{thm}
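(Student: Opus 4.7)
The strategy exploits the connectivity of $\toSepIsogssu$ (Proposition~\ref{prop:tSIsogssu}) together with the finiteness of the set $\SPSp$ of supersingular $j$-invariants over $\Fpi$. The conceptual point, already signalled in Remark~\ref{rem:TopSp}, is that no such continuous splitting can exist for $\toSepIsogssu$ itself: the morphism set $\toSepIsogssu(\boldsymbol{\mathcal E}_0,-)$ is a nontrivial torsor under the automorphism subgroupoid, and there is no continuous global trivialization. Passing to $\catC$ or $\catC_0$ is designed precisely to kill this action, by quotienting by $\AUT$ (which sits inside $\mubf_{p^2-1}$, since $\Aut\mathcal E\subset\mu_{p^2-1}(\Fpi)$ for every supersingular $\mathcal E$).

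I would first construct $\sigma_0$. Using Theorem~\ref{thm:Standard WeierstarssForm} together with the finiteness of $\SPSp$, choose for each supersingular $j$-invariant a representative oriented curve $\boldsymbol{\mathcal E}_j=(\mathcal E_j,\omega_j)$ in Weierstra\ss{} form, and by Proposition~\ref{prop:tSIsogssu} pick some morphism $\phi_j\:\boldsymbol{\mathcal E}_0\lra\boldsymbol{\mathcal E}_j$ in $\toSepIsogssu$. By Theorem~\ref{thm:GammaMuReps}, the stratum of $\Obj\catC_0$ consisting of classes with $j$-invariant $j$ is the orbit of $\boldsymbol{\mathcal E}_j$ under $\Gm/\mubf_{p^2-1}$; every object there is represented by a twist $(\mathcal E_j^{u^2},u\omega_j)$, and the twisting morphism $\tau_u$ gives a canonical isomorphism $\boldsymbol{\mathcal E}_j\lra(\mathcal E_j^{u^2},u\omega_j)$ in $\toSepIsogssu$. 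Set
\[
\sigma_0\bigl(\mathcal E_j^{u^2},u\omega_j\bigr)=[\tau_u\o\phi_j]\quad\text{in $\catC_0$.}
\]
The class on the right is well defined because the $\mu_{p^2-1}$-ambiguity in $u$ is precisely what has been divided out, and the resulting map is continuous on each stratum since $u\mapsto\tau_u$ is algebraic. The dom/codom conditions hold by construction.

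The map $\sigma$ is built identically, now using the $\AUT$-quotient; since $\Aut\mathcal E\subset\mu_{p^2-1}(\Fpi)$, the recipe descends from the $\catC_0$-construction by simply remembering the finer equivalence relation, with the same algebraic formula for the twisting part and the same freedom in $\phi_j$. The finitely many $j$-strata are disjoint components in the topologies inherited from $\Spec\ssEll_*[u,u^{-1}]^{\mu_{p^2-1}}$ and its refinement for $\catC$, so no gluing across strata is required. The main obstacle is therefore not existence on each stratum but verifying continuity: one must check that $\tau_u\o\phi_j$, viewed in the double-coset morphism set, depends continuously on $u$ and that the $\mu_{p^2-1}$-ambiguity (equivalently the $\Aut\mathcal E$-ambiguity) in the choice of a square root of the twist parameter is genuinely absorbed by the quotient. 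The latter is where the whole construction works in $\catC,\catC_0$ but fails in $\toSepIsogssu$.

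Finally, given $\sigma$ and $\sigma_0$, the splittings are realized by the continuous functors
\[
\Obj\catC\rtimes\Aut_{\catC}\boldsymbol{\mathcal E}_0\lra\catC,\quad
(\boldsymbol{\mathcal E},\alpha)\longmapsto\sigma(\boldsymbol{\mathcal E})\o\alpha,
\]
with inverse $\phi\longmapsto(\codom\phi,\sigma(\codom\phi)^{-1}\o\phi\o\sigma(\dom\phi))$, and similarly for $\catC_0$. Surjectivity on morphisms uses connectivity (Corollary~\ref{cor:ConnIsogss} and Proposition~\ref{prop:tSIsogssu}), injectivity is formal, and continuity of both directions follows from continuity of $\sigma$ and of composition in the ambient topological groupoids.
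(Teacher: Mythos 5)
Your argument follows essentially the same path as the paper's: stratify $\Obj\catC$ (resp.\ $\Obj\catC_0$) by the finitely many supersingular $j$-invariants into clopen sets, pick a representative curve $\boldsymbol{\mathcal E}_j$ and a connecting isogeny $\phi_j$ from $\boldsymbol{\mathcal E}_0$ on each stratum, and observe that the isomorphism onto a general object in the stratum is well-defined in the quotient because its ambiguity lies in $\AUT$ (resp.\ $\mubf_{p^2-1}$). You are slightly more explicit than the paper in identifying that isomorphism concretely as the twist $\tau_u$, which makes the continuity claim transparent, and you spell out the resulting splitting functor and its inverse, but both of these are refinements rather than a different route.
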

\begin{proof}
We verify this for $\catC$, the proof for $\catC_0$
being similar. Choose an object $(\mathcal E_0,\omega_0)$
of $\catC$ and set $\alpha_0=j(\mathcal E_0)$.

First note that for each $\alpha\in\Fpi$, the subcategory of
$\toSepIsogssu$ consisting of objects $(\mathcal E,\omega)$
with $j(\mathcal E)=\alpha$ is either empty or forms a closed
and open set $U_\alpha$ in the natural (Zariski) topology
on the space of all such elliptic curves. In each of the
non-empty sets $U_\alpha$, we may pick an element
$(\mathcal E_\alpha,\omega_\alpha)$. Then for each
$(\mathcal E,\omega)$ with $j(\mathcal E)=\alpha$, there is
a non-unique isomorphism
$\phi_{(\mathcal E,\omega)}\:
(\mathcal E_\alpha,\omega_\alpha)\lra(\mathcal E,\omega)$
in $\oSepIsog$. Given a second such isomorphism $\phi'$, the
composite $\phi^{-1}\o\phi'$ is in $\Aut\mathcal E_\alpha$.
Passing to the quotient category $\catC$ we see that the image
of the subcategory generated by $U_\alpha$ is connected since
all such isomorphisms $\phi$ have identical images.

Now for every $\alpha$ with $U_\alpha$ non-empty, we may
choose a separable isogeny
\[
\phi_\alpha\:(\mathcal E_0,\omega_0)
\lra(\mathcal E_\alpha,\omega_\alpha).
\]
Again, although this is not unique, on passing to the
image set $\bar U_\alpha$ in $\catC$ we obtain a unique
such morphism between the images in $\catC$. Forming the
composite $\phi_{(\mathcal E,\omega)}\o\phi_\alpha$ and
passing to $\catC$ gives a continuous map
$\bar U_\alpha\lra\catC$ with the desired properties,
and then patching together these maps over the finitely
many supersingular $j$-invariants for the prime $p$
establishes the result.
\end{proof}
\begin{cor}
\label{cor:Category-Splittings}
There are equivalences of topological categories
\[
\catC
\simeq\Aut_{\catC}\boldsymbol{\mathcal E}_0,
\quad
\catC_0
\simeq\Aut_{\catC_0}\boldsymbol{\mathcal E}_0.
\]
\end{cor}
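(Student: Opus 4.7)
The plan is to deduce both equivalences from Theorem~\ref{thm:TopConn-catC} by the standard argument that a connected groupoid is equivalent to the one-object groupoid given by the automorphism group of any basepoint, upgraded to the topological setting using the continuous sections $\sigma$ and $\sigma_0$. I will write out the argument for $\catC$; the case of $\catC_0$ is identical with $\sigma_0$ replacing $\sigma$.

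First, replacing $\sigma$ by $\boldsymbol{\mathcal E} \mapsto \sigma(\boldsymbol{\mathcal E}) \o \sigma(\boldsymbol{\mathcal E}_0)^{-1}$ if necessary, I may arrange $\sigma(\boldsymbol{\mathcal E}_0) = \Id_{\boldsymbol{\mathcal E}_0}$; continuity is preserved because composition and inversion in $\catC$ are continuous. Let $F \: \Aut_{\catC}\boldsymbol{\mathcal E}_0 \hookrightarrow \catC$ be the tautological full inclusion of the one-object subcategory on $\boldsymbol{\mathcal E}_0$; this is continuous since $\Aut_{\catC}\boldsymbol{\mathcal E}_0$ carries its subspace topology.

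Second, define $G \: \catC \lra \Aut_{\catC}\boldsymbol{\mathcal E}_0$ by sending every object to $\boldsymbol{\mathcal E}_0$ and every morphism $f \: \boldsymbol{\mathcal E}_1 \lra \boldsymbol{\mathcal E}_2$ to
\[
G(f) \;=\; \sigma(\boldsymbol{\mathcal E}_2)^{-1} \o f \o \sigma(\boldsymbol{\mathcal E}_1).
\]
Functoriality is a direct check: $G(g \o f) = \sigma(\boldsymbol{\mathcal E}_3)^{-1} \o g \o \sigma(\boldsymbol{\mathcal E}_2) \o \sigma(\boldsymbol{\mathcal E}_2)^{-1} \o f \o \sigma(\boldsymbol{\mathcal E}_1) = G(g) \o G(f)$, and $G(\Id) = \Id$ follows from the normalization of $\sigma$. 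Continuity of $G$ as a map of morphism spaces follows from the continuity of $\sigma$ together with that of composition and inversion. By construction, $G \o F$ is the identity functor on $\Aut_{\catC}\boldsymbol{\mathcal E}_0$.

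Third, to produce a continuous natural isomorphism $\sigma \: F \o G \Rightarrow \Id_{\catC}$, note that for every morphism $f \: \boldsymbol{\mathcal E}_1 \lra \boldsymbol{\mathcal E}_2$ the definition of $G$ yields the naturality square
\[
\sigma(\boldsymbol{\mathcal E}_2) \o G(f) \;=\; f \o \sigma(\boldsymbol{\mathcal E}_1),
\]
and the components $\sigma(\boldsymbol{\mathcal E})$ assemble into a continuous section $\Obj\catC \lra \Mor\catC$ by Theorem~\ref{thm:TopConn-catC}. Together with $G \o F = \Id$, this exhibits $F$ and $G$ as mutually quasi-inverse equivalences of topological categories, giving the claimed equivalence $\catC \simeq \Aut_{\catC}\boldsymbol{\mathcal E}_0$, and the same argument applied to $\sigma_0$ yields $\catC_0 \simeq \Aut_{\catC_0}\boldsymbol{\mathcal E}_0$.

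The only real subtlety (and hence the main point to verify, though not a serious obstacle) is the topological part: one must check that the section $\sigma$ produced by the finite patching argument in Theorem~\ref{thm:TopConn-catC} can be renormalized at $\boldsymbol{\mathcal E}_0$ while remaining continuous, and that the resulting $G$ is genuinely continuous rather than merely defined on points. Both reduce to continuity of the structure maps of the profinite groupoid $\catC$ and of $\sigma$ itself, which are already in hand.
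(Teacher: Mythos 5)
Your argument is correct and is essentially the intended one: the paper leaves the corollary as an immediate consequence of Theorem~\ref{thm:TopConn-catC}, and your proof supplies precisely the standard "connected groupoid with continuous basepoint section is equivalent to the automorphism group at the basepoint" argument, with the renormalization and continuity points handled appropriately.
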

\begin{cor}
\label{cor:HA-Splittings}
There is an equivalence of Hopf algebroids
\[
(\ssEll_*[u,u^{-1}]^{\mu_{p^2-1}},
\ssGamma^{\mu_{p^2-1}}_*)
\lra(K(2)_*,K(2)_*K(2)).
\]
\end{cor}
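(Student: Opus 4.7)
The plan is to translate the equivalence of topological groupoids from Corollary~\ref{cor:Category-Splittings} into an equivalence of complete topological Hopf algebroids, using the representability statement of Theorem~\ref{thm:GammaMuReps}, and then to identify the resulting single-object automorphism Hopf algebroid with $(K(2)_*,K(2)_*K(2))$ via the classical Morava change-of-rings picture.

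First, by Theorem~\ref{thm:GammaMuReps}, the pair $(\ssEll_*[u,u^{-1}]^{\mu_{p^2-1}},\ssGamma^{\mu_{p^2-1}}_*)$ represents $\catC_0$ as a $\Gm$-equivariant formal groupoid scheme. Under the formalism of Section~\ref{sec:CohTopHA}, Devinatz's notion of equivalence of complete topological Hopf algebroids corresponds contravariantly to equivalences of the associated formal groupoid schemes (Corollary~\ref{cor:EqceGpdsExtIso} and the machinery behind it). Hence the equivalence $\catC_0\simeq\Aut_{\catC_0}\boldsymbol{\mathcal E}_0$ from Corollary~\ref{cor:Category-Splittings} translates into an equivalence between $(\ssEll_*[u,u^{-1}]^{\mu_{p^2-1}},\ssGamma^{\mu_{p^2-1}}_*)$ and the complete topological Hopf algebroid representing the one-object automorphism scheme $\Aut_{\catC_0}\boldsymbol{\mathcal E}_0$.

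It remains to identify this one-object Hopf algebroid with $(K(2)_*,K(2)_*K(2))$. I would choose $\mathcal E_0$ supersingular over $\F_{p^2}$ (via Theorem~\ref{thm:ssFp-existence}) together with an invariant $1$-form $\omega_0$. By Propositions~\ref{prop:EndEpCompletion} and~\ref{prop:EllFGL-WFq-module}, $F_{\UB{\mathcal E_0}}$ is a Lubin--Tate formal group law of height~$2$ over $\F_{p^2}$, and the automorphism group scheme of $\boldsymbol{\mathcal E}_0$ in $\toSepIsogssu$ is, as a topological group scheme with $\Gm$- and Galois-actions, the extended Morava stabilizer groupoid scheme governing Morava $K(2)$-theory. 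Passage to the quotient category $\catC_0=\tSepIsogssu/\mubf_{p^2-1}$ kills the Teichm\"uller copy of $\mu_{p^2-1}\subset\W(\F_{p^2})^\times$, which is exactly the quotient needed to land in the standard presentation of $(K(2)_*,K(2)_*K(2))$.

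The main obstacle will be bookkeeping the interaction between the $\Gm$-grading (weight structure), the Galois action by $\Gal(\Fpi/\F_{p^2})$, and the quotient by $\mubf_{p^2-1}$, so as to produce $(K(2)_*,K(2)_*K(2))$ on the nose rather than a closely related variant such as a form of $E(2)$-theory, an extended Lubin--Tate flavour, or an $\Fpi$-base-changed version. The concrete check reduces to verifying that the universal strict isomorphism of formal group laws classified by $\ssGamma^{\mu_{p^2-1}}_*$ restricts, at the height-$2$ Lubin--Tate formal group attached to $\mathcal E_0$, to the universal strict isomorphism presented by $K(2)_*K(2)$; this follows from Quillen's description of $\MU_*\MU$, the definitions of $\Gamma_*$ and $\ssGamma_*$ set up in Section~\ref{sec:RecollEllCoh}, and the congruences of Theorem~\ref{thm:pAv2} that pin down $u_2$ modulo the supersingular ideal $(p,A)$.
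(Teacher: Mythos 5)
Your proposal is correct and follows the route that the paper leaves implicit: the paper states Corollary~\ref{cor:HA-Splittings} with no explicit proof, treating it as an immediate consequence of Corollary~\ref{cor:Category-Splittings} via Theorem~\ref{thm:GammaMuReps} and the Devinatz formalism of Section~\ref{sec:CohTopHA}. Your filling-in of the contravariant passage from the groupoid equivalence to the Hopf algebroid equivalence, and the identification of the single-object automorphism Hopf algebroid with $(K(2)_*,K(2)_*K(2))$ via the height-$2$ Lubin--Tate structure of $F_{\UB{\mathcal E_0}}$ (Propositions~\ref{prop:EndEpCompletion} and~\ref{prop:EllFGL-WFq-module}) together with the $u_2$-congruence of Theorem~\ref{thm:pAv2}, is exactly the argument the paper is invoking; you have also correctly flagged the one genuine bookkeeping subtlety (landing on $K(2)_*K(2)$ rather than a twisted or base-changed variant), which is precisely what the quotient by $\mubf_{p^2-1}$ and the restriction to Galois-invariant functions in Theorem~\ref{thm:GammaMuReps} are designed to arrange.
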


\section{Some equivalences of Hopf algebroids}
\label{sec:EqcesHopfAlgbds}

Now that we possess the machinery developed in
Section~\ref{sec:CohTopHA}, we are in a position to give some
cohomological results. Our goal is to reprove the following
result of~\cite[theorem 4.1]{ellext}.
\begin{thm}
\label{thm:ExtEllEll-ExtKnKn}
There is an equivalence of Hopf algebroids
\[
(\Ell_*/(p,A),\Ell_*\Ell/(p,A))\lra(K(2)_*,K(2)_*K(2)),
\]
inducing an isomorphism
\[
\Ext^{*,*}_{\Ell_*\Ell}(\Ell_*,\Ell_*/(p,A))
\iso
\Ext^{*,*}_{K(2)_*K(2)}(K(2)_*,K(2)_*).
\]
\end{thm}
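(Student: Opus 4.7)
The plan is to exhibit a zigzag of equivalences of complete topological Hopf algebroids linking $(\Ell_*/(p,A), \Ell_*\Ell/(p,A)) = (\ssEll_*, \ssGamma^0_*)$ to $(K(2)_*, K(2)_*K(2))$, and then invoke Corollary~\ref{cor:EqceGpdsExtIso} to deduce the $\Ext$ isomorphism from the Hopf algebroid equivalence.

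First I would observe that by Theorem~\ref{thm:GammaReps0}, the Hopf algebroid $(\ssEll_*, \ssGamma^0_*)$ represents the groupoid scheme $\ssSELLFGL$ of supersingular elliptic formal group laws with strict isomorphisms, while by Theorems~\ref{thm:GammaReps} and~\ref{thm:Rep-tIsogssu} its graded enlargement $(\ssEll_*[u,u^{-1}], \ssGamma_*)$ represents the full isomorphism groupoid $\ssELLFGL \iso \toSepIsogssu$. The passage between these two Hopf algebroids is an equivalence in the Devinatz sense of Section~\ref{sec:CohTopHA}, since the enlargement exactly encodes the $\Gm$-action corresponding to the grading. Next, Theorem~\ref{thm:GammaMuReps} supplies the equivalence of topological groupoids $\tilde\epsilon\:\toSepIsogssu\simeq\catC_0$, hence an equivalence
\[
(\ssEll_*[u,u^{-1}], \ssGamma_*) \simeq (\ssEll_*[u,u^{-1}]^{\mu_{p^2-1}}, \ssGamma^{\mu_{p^2-1}}_*).
\]
Finally, Corollary~\ref{cor:HA-Splittings} provides the concluding equivalence of this last Hopf algebroid with $(K(2)_*, K(2)_*K(2))$; this step ultimately rests on the base-point splitting of Theorem~\ref{thm:TopConn-catC} together with the identification of $\Aut_{\catC_0}\boldsymbol{\mathcal E}_0$ with (an extension of) the Morava stabilizer group $\tSGp$, using Proposition~\ref{prop:EndEpCompletion} and Theorem~\ref{thm:TateGeneral} of Section~\ref{sec:TateMods}.

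The main obstacle will be bookkeeping: the $\Gm$-grading, the $\mu_{p^2-1}$-twistings, and the Galois action of $\Gal(\Fpi/\F_p)$ all act on the representing algebras, and verifying that each step of the zigzag is an equivalence in the precise sense required by Corollary~\ref{cor:EqceGpdsExtIso}, rather than merely an equivalence of underlying ungraded groupoids, requires careful compatibility checks, particularly at the intermediate step involving the $\mu_{p^2-1}$-invariants. Granted this equivalence of Hopf algebroids, the $\Ext$ isomorphism is immediate: the standard change-of-rings identification $\Ext^{*,*}_{\Ell_*\Ell}(\Ell_*, \Ell_*/(p,A)) \iso \Ext^{*,*}_{\ssGamma^0_*}(\ssEll_*, \ssEll_*)$, valid because $(p,A) \ideal \Ell_*$ is $\Ell_*\Ell$-coinvariant, reduces the claim to the invariance of $\Ext$ under Hopf algebroid equivalences, which is exactly the content of Corollary~\ref{cor:EqceGpdsExtIso}.
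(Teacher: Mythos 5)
Your proposal follows the same overall strategy as the paper: a chain of Hopf algebroid comparisons
\[
(\ssEll_*,\ssGamma^0_*)\longleftrightarrow(\ssEll_*[u,u^{-1}],\ssGamma_*)
\longrightarrow(\ssEll_*[u,u^{-1}]^{\mu_{p^2-1}},\ssGamma^{\mu_{p^2-1}}_*)
\longrightarrow(K(2)_*,K(2)_*K(2)),
\]
using Theorem~\ref{thm:GammaMuReps} and Corollary~\ref{cor:HA-Splittings} for the last two legs, preceded by the change-of-rings reduction from $\Ext_{\Ell_*\Ell}(\Ell_*,\Ell_*/(p,A))$ to $\Ext_{\ssGamma^0_*}(\ssEll_*,\ssEll_*)$ and with $\Ext$-invariance deduced via Corollary~\ref{cor:EqceGpdsExtIso}. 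This is exactly the paper's route.

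The one place where your framing is imprecise is the first leg. You treat the passage between $(\ssEll_*[u,u^{-1}],\ssGamma_*)$ and $(\ssEll_*,\ssGamma^0_*)$ as another equivalence of complete Hopf algebroids, to be fed uniformly through Corollary~\ref{cor:EqceGpdsExtIso}. The paper does not assert this, and the map $u\mapsto1$, $b_0\mapsto1$ is a proper quotient rather than an equivalence in the sense of Devinatz's Definition~1.14. Instead the paper isolates this step as Proposition~\ref{prop:Dev-b0}, whose proof appeals to the separate argument in Devinatz's Construction~2.7 (the remarks between his equations~(2.8) and~(2.9)), which gives the graded-versus-$\Gm$-equivariant comparison directly at the level of $\Ext$. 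Your guiding intuition --- that the $u$-enlargement encodes precisely the $\Gm$-action corresponding to the grading --- is the correct one, but the mechanism is not Corollary~\ref{cor:EqceGpdsExtIso}; a careful write-up would substitute the Construction~2.7 argument for that particular leg, while the remaining two legs (the $\mu_{p^2-1}$-invariants step and the splitting to $\Aut_{\catC_0}\boldsymbol{\mathcal E}_0$) are genuine equivalences and your use of Corollary~\ref{cor:EqceGpdsExtIso} there is exactly as in the paper.
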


First we will make use of a further result of Devinatz~\cite{Devinatz}.
\begin{prop}\label{prop:Dev-b0}
The natural morphism of Hopf algebroids
\[
(\ssEll_*[u,u^{-1}],\ssGamma_*)\lra(\ssEll_*,\ssGamma_*^0)
\]
induces an isomorphism of $\Ext$ groups.
\end{prop}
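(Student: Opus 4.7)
The strategy is to apply Corollary \ref{cor:EqceGpdsExtIso}: it suffices to show the morphism, which I shall call $\pi$, is an equivalence of complete Hopf algebroids in the sense of Devinatz. An obvious candidate for the inverse is the inclusion
\[
\iota\:(\ssEll_*,\ssGamma_*^0)\lra(\ssEll_*[u,u^{-1}],\ssGamma_*)
\]
given by $\ssEll_*\hookrightarrow\ssEll_*[u,u^{-1}]$ on the base, and on the morphism algebra by the evident inclusion arising from the map $\MU_*\MU\hookrightarrow\MU_*[u,u^{-1}][b_0,b_0^{-1},b_k:k\geq1]$ sending $b_k\mapsto b_k$. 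Directly from the definitions one has $\pi\o\iota=\mathrm{id}_{(\ssEll_*,\ssGamma_*^0)}$, so the content of the proof is the construction of a natural equivalence $\iota\o\pi\simeq\mathrm{id}_{(\ssEll_*[u,u^{-1}],\ssGamma_*)}$.

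Via the identifications of Theorems \ref{thm:GammaReps0} and \ref{thm:GammaReps}, $\pi$ corresponds geometrically to the inclusion $\ssSELLFGL\hookrightarrow\ssELLFGL$ of strict isomorphisms of formal group laws into arbitrary ones, and $\iota\o\pi$ is the retraction of $\ssELLFGL$ onto $\ssSELLFGL$ that collapses the $\Gm$-twist parameter. The required natural equivalence is then the canonical $\Gm$-twisting morphism from each object $(\mathcal E,\omega)$ with parameter $u$ to its strict image with parameter~$1$. Dually, I would exhibit it as a continuous $\ssEll_*[u,u^{-1}]$-algebra map $\tau\:\ssGamma_*\lra\ssEll_*[u,u^{-1}]$ sending $b_0\mapsto u$ and $b_k\mapsto 0$ for $k\geq1$ (up to sign/convention), leaving the tensor factors $\ssEll_*[u,u^{-1}]$ alone.

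The main technical step is verifying the identities of \cite[Definition 1.14]{Devinatz} for $\tau$, namely compatibility with $\eta_L,\eta_R$ and with the coproduct $\psi$ on $\ssGamma_*$. The first is what forces the values $\tau(b_0)=u$ and $\tau(b_k)=0$ for $k\geq 1$ in the first place: the right-unit formula $\eta_R(xu^n)=\eta_R(x)u^{d+n}b_0^n$ is exactly matched by $\iota\o\pi$ on one side and the identity on the other under these choices. The second follows from the composition-of-power-series formula for $\psi(b_k)$ in the excerpt, and in particular is consistent at the crucial level of $b_0$ because $\psi(b_0)=b_0\otimes b_0$ matches the multiplicative behaviour of the $\Gm$-twists. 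Once this bookkeeping is in place, Proposition \ref{prop:EqceGpdsExtIso} shows that $H^*\pi$ and $H^*\iota$ become mutually inverse isomorphisms of $\Ext$ groups, so Corollary \ref{cor:EqceGpdsExtIso} delivers the claim.
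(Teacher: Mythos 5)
Your strategy of applying Corollary~\ref{cor:EqceGpdsExtIso} by exhibiting $\pi$ as an equivalence of complete Hopf algebroids is the right idea, and this is essentially the content of Devinatz's Construction~2.7, which the paper cites without giving details. However, your construction of the candidate inverse $\iota$ does not work as stated, and the ``up to sign/convention'' hedge on $\tau$ is concealing a substantive gap.

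The problem is that the \emph{naive} inclusion does not define a morphism of Hopf algebroids. Recall the right unit formula $\eta_R(xu^n)=\eta_R(x)u^{d+n}b_0^n$ for $x\in\MU_{2d}$. Thus in $\MU_*[u,u^{-1}][b_0,b_0^{-1},b_k:k\geq1]$ one has $\eta_R(x)=\eta_R^0(x)u^d$, where $\eta_R^0$ is the ordinary right unit, and the extra $u^d$ factor is present for every element of nonzero internal degree. Your proposed map $\MU_*\MU\hookrightarrow\MU_*[u,u^{-1}][b_0,b_0^{-1},b_k:k\geq1]$ sending $b_k\mapsto b_k$ is therefore \emph{not} a map of $\MU_*$-bimodules with respect to $\eta_R$: it takes $\eta_R^0(x)$ to $\eta_R^0(x)$ rather than to $\eta_R^0(x)u^d$. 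Consequently the induced map on the two-sided tensor products, $\iota_1\:\ssGamma^0_*\to\ssGamma_*$, is not well defined --- the right tensor relation $m\,\eta_R(x)\otimes b=m\otimes\theta(x)b$ is not preserved because the image of $\eta_R^0(x)$ and the right unit of the target differ by $u^d$. Once $\iota$ fails to be a morphism, the statements ``$\pi\circ\iota=\mathrm{id}$'' and ``$\tau\:\iota\circ\pi\Rightarrow\mathrm{id}$ is a natural equivalence'' are not meaningful in the setting of Definition~1.14 of~\cite{Devinatz}.

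The correct inverse must itself twist by powers of $u$: on the base $\iota_0(x)=xu^{-\wt x}$ for $x$ of weight $\wt x$ (dually this is the groupoid functor $(F,u_0)\mapsto F^{u_0^{-1}}$, whose Weierstra\ss\ coefficients are those of $F$ rescaled by $u_0^{\mp4}$, $u_0^{\mp6}$), and on the morphism algebra one must introduce matching $u^k b_0^{-1}$ factors on the $b_k$. Since $\pi$ sets $u,b_0\mapsto1$, the equation $\pi\circ\iota=\mathrm{id}$ does in fact hold for the corrected $\iota$, so your intermediate claim survives once $\iota$ is replaced. Your formula $\tau(b_0)=u$, $\tau(b_k)=0$ is in fact the right one for the corrected $\iota$ (it is the dual of the canonical morphism $T\mapsto u_0 T$ from $(F^{u_0^{-1}},1)$ to $(F,u_0)$), but ``leaving the tensor factors $\ssEll_*[u,u^{-1}]$ alone'' is inaccurate: $\tau$ is the identity on the right-unit factor, but acts on the left-unit factor by the same $u$-power twist as $\iota_0\o\pi_0$, since it must satisfy $\tau\o\eta_L=(\iota\pi)_0$. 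These $u$-power twists on $\iota$ and on the left factor of $\tau$ are the essential content that your argument is missing; they are not a matter of sign but are forced by the degree-dependent $u^d$ in $\eta_R$.
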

\begin{proof}
See the discussion of~\cite[Construction 2.7]{Devinatz}, in particular
the remarks between equations~(2.8) and~(2.9).
\end{proof}
By~\cite[proposition 1.3d]{MiRa}, we have
\begin{align*}
\Ext^{*,*}_{\Ell_*\Ell}(\Ell_*,\Ell_*/(p,A))
&\iso
\Ext^{*,*}_{\ssGamma_*^0}(\ssEll_*,\ssEll_*)\\
&\iso
\Ext^{*,*}_{\ssGamma_*}(\ssEll_*[u,u^{-1}],\ssEll_*[u,u^{-1}]).
\end{align*}
By Theorem \ref{thm:GammaMuReps}, there is an isomorphism
\begin{align*}
\Ext^{*,*}_{\ssGamma_*}(\ssEll_*[u,u^{-1}],\ssEll_*[u,u^{-1}])& \\
\iso&\Ext^{*,*}_{\ssGamma^{\mu_{p^2-1}}_*}
(\ssEll_*[u,u^{-1}]^{\mu_{p^2-1}},\ssEll_*[u,u^{-1}]^{\mu_{p^2-1}}).
\end{align*}
Finally, by Corollary~\ref{cor:HA-Splittings} there is an isomorphism
\begin{align*}
\Ext^{*,*}_{\ssGamma^{\mu_{p^2-1}}_*}
(\ssEll_*[u,u^{-1}]^{\mu_{p^2-1}},\ssEll_*[u,u^{-1}]^{\mu_{p^2-1}})
&\\
\iso&\Ext^{*,*}_{K(2)_*K(2)}(K(2)_*,K(2)_*).
\end{align*}

\section{Isogenies and stable operations in supersingular elliptic cohomology}
\label{sec:Isog&Ops}

In this section we explain how the category $\toSepIsogssu$ naturally provides a
model for a large part of the stable operation algebra of supersingular elliptic
cohomology $\ssEll^*(\ )$. In fact, it turns out that the subalgebra
$\ssEll^*\Ell=\ssEll^*(\Ell)$ can be described as a subalgebra of the `twisted
topologized category algebra' of $\toSepIsogssu$ with coefficients in
$\ssEll_*[u,u^{-1}]$. Such a clear description is not available for
$\Ell^*\Ell=\Ell^*(\Ell)$, although an analogous result for the stable operation
algebra $K(1)^*(E(1))$ is well known with the later being a twisted topological
group algebra. More generally, Morava and his interpreters have given analogous
descriptions of $K(n)^*(E(n))$ for $n\geq1$.

By \cite{homell}, $\ssEll_*$ and $\ssEll_*[u,u^{-1}]$
are products of `graded fields', hence
\begin{align*}
\ssEll^*\Ell&=\Hom_{\ssEll_*}(\ssEll_*(\Ell),\ssEll_*),\\
\ssEll[u,u^{-1}]^*\Ell&=
\Hom_{\ssEll_*[u,u^{-1}]}(\ssEll[u,u^{-1}]_*(\Ell),\ssEll_*[u,u^{-1}].
\end{align*}

The set of all morphisms originating at a particular
object $\UB{\mathcal E_0}=(\mathcal E_0,\omega_0)$ of
$\toSepIsogssu$, with defined over $\F_p$ (such curves
always exist by Theorem \ref{thm:ssFp}), is
\[
\toSepIsogssu(\UB{\mathcal E_0},*)=
\coprod_{\substack{\text{$\UB{\mathcal E}$ isogenous}\\
\text{to $\UB{\mathcal E_0}$}}}
\toSepIsogssu(\UB{\mathcal E_0},\UB{\mathcal E}),
\]
is noncanonically a product of the form
\[
\toSepIsogssu(\UB{\mathcal E_0},*)
=\coprod_{(\mathcal E_0/N,\omega_0)}
\IHom_{\D_\Fpi}(\TM_p(\mathcal E_0/N),\TM_p\mathcal E_0)
\times\Fpi^\times,
\]
where $N$ ranges over the finite subgroups of $\mathcal E_0$
of order prime to $p$.

Using similar methods to those of \cite{heckop,haell} we can
construct stable operations $\mathrm T_n$ ($p\nmid n$) in the
cohomology theory $\ssEll_*[u,u^{-1}]^*(\ )$ and these actually
restrict to operations in $\ssEll_*^*(\ )$. These operations
(together with Adams-like operations originating on the factor
of $\Fpi$) generate a Hecke-like algebra.

There is also a further set of operations coming from elements
of the component
\[
\toSepIsogssu(\UB{\mathcal E_0},\UB{\mathcal E_0})
=\IHom_{\D_\Fpi}(\TM_p(\mathcal E_0),\TM_p\mathcal E_0)
\times\Fpi^\times.
\]
These operations correspond to the Hecke-like algebra
of \cite{haell} associated to the Morava stabilizer group
$\SGp$.

Combining these two families of operations gives rise to
a composite Hecke-like algebra which in turn generates
subalgebras of the operation algebras in the cohomology
theories $\ssEll_*[u,u^{-1}]^*(\ )$ and $\ssEll_*^*(\ )$.

\section{Relationship with work of Robert}
\label{sec:Robert}

In~\cite{Robert}, Robert discussed the action of Hecke operators
$\mathrm T_n$ ($p\nmid n$) on the ring of holomorphic modular
forms modulo the supersingular ideal generated by $p,A$, in effect
studying the action of the \'etale part of the category
$\toSepIsogssu$. In this section we discuss connections between
this work and ours. Serre~\cite{Serre} gave an adelic description
of Hecke operators associated to supersingular elliptic curves which
appears to have connections with our work.

We begin with some comments on Robert's work which provides a
classical Hecke operator perspective on ours. We denote by
$\mathrm B\:\ssEll_*\lra\ssEll_*$ the operator $\mathrm B(F)=BF$,
which raises weight by $p+1$ and degree by $2(p+1)$. The following
operator commutativity formula holds for all primes $\ell\neq p$:
\begin{equation}
\label{eqn:B-Tl}
\mathrm B\mathrm T_\ell=\ell\mathrm T_\ell\mathrm B.
\end{equation}
This is actually a more general result than Robert proves
since he only works with holomorphic modular forms, but
our result of \cite{modforms}, discussed earlier in
Theorem~\ref{thm:pAv2}, gives in the ring $\ssEll_*$
\[
B^{p-1}=-\legendre{-1}{p}\Delta^{(p^2-1)/12},
\]
and this allows us to localize with respect to powers
of $\Delta$ or equivalently of $B$.

Recall the Hecke algebra
\[
\mathbf H_{p}=
\F_p[\mathrm T_\ell,\psi^\ell:\text{primes $\ell\neq p$}]
/\text{(relations)},
\]
where the relations are the usual ones satisfied
by Hecke operations, as described in Theorem~7
of \cite{heckop}. We follow Robert in introducing
certain twistings of a module $M$. For each natural
number $a$ let $M[a]$ denote underlying $\F_p$-module
$M$ with the twisted Hecke action
\[
\mathrm T_\ell\.m=\mathrm T_\ell^{[a]}m
=\ell^a\mathrm T_\ell m.
\]
When $F\in\ssEll_*$, this agrees with Robert's action
\[
\mathrm T_\ell^{[a]}F=\ell^a\mathrm T_\ell F,
\]
at least when restricted to the holomorphic part, and
then $M[a]\iso M[a+p-1]$ as $\mathbf H_{p}$-modules
since $\ell^{p-1}\equiv1\modp{p}$. We view multiplication
by $B$ as giving rise to homomorphisms of graded
$\mathbf H_{p}$-modules
$\mathrm B\:\ssEll_*[a]\lra\ssEll_*[a-1]$ for $a\in\Z$,
uniformly raising degrees by $2(p+1)$.

More generally, if $M_*$ is a right comodule over the
Hopf algebroid $(\ssEll_*,\ssGamma^0_*)$ with coproduct
$\gamma\:M\lra M\oTimes_{\ssEll_*}\ssGamma^0_*$, then
associated to each $a\in\Z$ there is a twisted comodule
$M_*[a]$ with coproduct
\[
\gamma^{[a]}m=\sum_im_i\otimes t_i\ind^a,
\]
where $\ind$ is defined in Equation~\ref{ex:ind} (see also
Proposition~\ref{prop:ind}) and $\gamma m=\sum_im_i\otimes t_i$.

Recall that for any left $\ssEll_*$-linear map
$\theta\:\ssGamma^0_*\lra\ssEll_*$ there is an operation
$\bar\theta$ on $M_*$ given by
\[
\bar\theta m=\sum_im_i\otimes\theta(t_i).
\]
By \cite{heckop,haell}, this construction gives rise to an
induced $\mathbf H_{p}$-module structures on $M_*$ and $M_*[a]$
agreeing with that generalizing Robert's discussed above. In
fact, these extend to module structures over the associated
\emph{twisted Hecke algebra} containing $\ssEll_*$ and
$\mathbf H_{p}$ as discussed in \cite{haell}. Also there are
homomorphisms $\mathrm B\:M_*[a]\lra M_*[a-1]$ of modules
over the twisted Hecke algebra and induced from multiplication
by $B$. The following is our analogue of~\cite[lemme~6]{Robert}.
\begin{thm}\label{thm:Robert-Comodule}
For $a\in\Z$, $\mathrm B\:M_*[a]\lra M_*[a-1]$
defines an isomorphism of $(\ssEll_*,\ssGamma^0_*)$-comodules.
\end{thm}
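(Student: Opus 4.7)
My plan is to reduce the theorem to two ingredients: invertibility of $B$ in $\ssEll_*$, giving bijectivity of the underlying module map, and a single coaction identity $\eta_R(B)=\eta_L(B)\cdot\ind$ in $\ssGamma^0_*$, which accounts for the shift in the twist parameter.

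First I would verify invertibility directly from Theorem~\ref{thm:pAv2}. The congruences $u_2\equiv-B^{p-1}\modp{p,A}$ and $u_2\equiv\legendre{-1}{p}\Delta^{(p^2-1)/12}\modp{p,A}$ exhibit $B^{p-1}$ as a unit multiple of a power of the invertible discriminant $\Delta$ in $\ssEll_*$; hence $B$ itself is a unit, and multiplication by $B$ is bijective on the underlying module of $M_*$.

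For the coaction identity, the key observation is that the Hecke commutation relation $\mathrm B\mathrm T_\ell=\ell\,\mathrm T_\ell\mathrm B$ from \eqref{eqn:B-Tl} is precisely the statement that on the dense subset of $\toSepIsogssu$ represented by separable isogenies of degree $\ell$ coprime to $p$, the coaction of $B$ picks up the factor $\ell=\ind(\phi)$. I would formalize this by using Theorem~\ref{thm:GammaReps0} (or its thickened analogue Theorem~\ref{thm:Rep-tIsogssu}) to view $B$ as a continuous function on the relevant formal groupoid scheme, evaluating on separable isogenies, and extending to the full Hopf algebroid by continuity, relying on the fact that $\ind$ is a well-defined group-like element (Proposition~\ref{prop:ind}). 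The upshot is the identity $\eta_R(B)=\eta_L(B)\cdot\ind$ in $\ssGamma^0_*$.

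Granting both ingredients, the comodule property is a short calculation. Writing $\gamma(m)=\sum_i m_i\otimes t_i$, the coaction identity yields $\gamma(Bm)=\sum_i Bm_i\otimes\ind\,t_i$ in $M_*\oTimes_{\ssEll_*}\ssGamma^0_*$, and therefore
\[
\gamma^{[a-1]}(Bm)=\sum_i Bm_i\otimes t_i\,\ind^a=B\cdot\gamma^{[a]}(m),
\]
so $\mathrm B$ is a morphism from $M_*[a]$ to $M_*[a-1]$. Combined with invertibility, this gives the desired isomorphism of comodules. The main obstacle is making precise the passage from the Hecke commutation relation --- which concerns the action on modular forms summed over finite sets of isogenies --- to the global coaction identity in $\ssGamma^0_*$: one must identify the element of $\ssGamma^0_*$ corresponding to $\ind$ and check that $B$ transforms as claimed on all continuous morphisms, not just on the separable-isogeny locus. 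Once this bookkeeping is handled, the rest of the argument is formal.
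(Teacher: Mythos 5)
Your proposal correctly isolates the two ingredients --- invertibility of $B$ in $\ssEll_*$ and the coaction identity $\eta_R(B)=\eta_L(B)\cdot\ind$ --- and your final coaction calculation agrees with the paper's. The observation on invertibility is also sound and, while only implicit in the paper (it appears just before the theorem statement rather than in the proof), it is a legitimate and necessary step.

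The gap is in the derivation of the coaction identity, and you partly flag it yourself. The Hecke commutation relation $\mathrm B\mathrm T_\ell=\ell\,\mathrm T_\ell\mathrm B$ of \eqref{eqn:B-Tl} is \emph{not} ``precisely the statement'' that $B$ picks up the factor $\ind(\phi)$ at each separable isogeny; it is the weaker statement that the discrepancy $\eta_R(B)-\eta_L(B)\ind$ lies in the kernel of the Hecke functional $T_\ell^*\:\ssGamma^0_*\to\ssEll_*$, i.e.\ that a certain sum over the $\ell+1$ subgroups of order $\ell$ vanishes. Since several of the quotient curves $\mathcal E/N$ can share a $j$-invariant (or be twists of one another), and since elements of $\ssEll_*$ are constrained by weight and by Galois invariance, one cannot simply ``decouple'' the summands; the passage from the averaged statement to the pointwise one is not a routine density argument, and you do not supply it. The paper sidesteps this entirely: its proof works upstream in $\Ell_*\Ell_{(p)}$ with Robert's explicit $q$-expansion congruences for $B(q')-B(q)$ and $\ell^{p+1}B(q')-B(q)$, which are already formulated for individual lattice inclusions $L\subset L'$, extends them multiplicatively to composite index using the integrality criterion of \cite[theorem 6.3]{ellellp1}, and so obtains the pointwise congruence \eqref{eqn:Robert-ellellp1version} directly; the Hecke commutation is then a \emph{consequence}, read off at the end, rather than the input. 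In addition, the paper's proof contains a step your sketch omits: verifying that the lattice-index function $(L\subset L')\mapsto[L';L]$ on $\Ell_*\Ell_{(p)}$ actually reduces to the element $\ind\in\ssGamma^0_*$, which requires lifting supersingular curves and their isogenies to characteristic zero via CM theory. So while your skeleton matches the paper's, the route you propose for the central identity runs in the reverse direction to the paper's argument and, as stated, does not close.
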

\begin{proof}
We make use of the description of $\Ell_*\Ell_{(p)}$ from
\cite{ellellp1} and view modular forms as functions on
the space of oriented lattices in $\C$. As will see, the
argument used by Robert to determine $\mathrm T_\ell B$
for a prime $\ell\neq p$ is based on a congruence in
$\Ell_*\Ell_{(p)}$.

By \cite[equation 19, th\'eor\`eme B/lemme 7]{Robert}, we
have the following. If for some $r=0,1,\ldots\ell-1$,
\[
L=\<\tau,1\>\subset\<\tau',1\>,
\quad
\tau'=\frac{\tau+r}{\ell},
\quad
q'=e^{\ds2\pi i\tau'},
\]
then taking $q$-expansions over $\Z_{(p)}$ gives
\[
B(q')-B(q)\equiv
-12\mathop{\sum\sum}_{\substack{1\leq s\leq\ell-1\\ 1\leq k}}
\(\frac{q^k(q')^s}{(1-q^k(q')^s)^2}+
\frac{q^k(q')^{-s}}{(1-q^k(q')^{-s})^2}\)
\modp{p}.
\]
On the other hand, if
\[
L=\<\tau,1\>\subset\<\tau,\frac{1}{\ell}\>,
\quad
\tau'=\ell\tau,
\quad
q'=e^{\ds2\pi i\tau'},
\]
then by taking $q$-expansions we obtain
\begin{multline*}
\ell^{p+1}B(q')-B(q)\equiv\\
\sum_{1\leq s\leq\ell-1}
\(
(\ell^2-1)B(q)
-12\ell^2\sum_{1\leq k}\(\frac{q^k(q')^s}{(1-q^k(q')^s)^2}+
\frac{q^k(q')^{-s}}{(1-q^k(q')^{-s})^2}\)
\)\modp{p}.
\end{multline*}
In the terminology of \cite{ellellp1}, the coefficient of each monomial
$q^u(q')^v$ is a stably numerical polynomial in $\ell$. Indeed, using
the integrality criterion of \cite[theorem 6.3]{ellellp1} for generalized
modular forms to lie in $\Ell_*\Ell_{(p)}$, together with the fact that
every lattice inclusion of index not divisible by $p$ factors into a
sequence of lattice inclusions of prime index, we can obtain similar
formul\ae{} for all lattice inclusions of (not necessarily prime) index
not divisible by $p$.

The precise interpretation of what is going on here is that there
are functions $F_0,F_1\in\Ell_*\Ell_{(p)}$ on inclusions of lattices
such that for any inclusion of lattices $L\subset L'$ of degree
$[L';L]$ not divisible by $p$,
\begin{equation}\label{eqn:Robert-ellellp1version}
B(L')-[L';L]B(L)=pF_0(L\subset L')+A(L)F_1(L\subset L').
\end{equation}
It follows from this that in the ring $\Ell_*\Ell_{(p)}$,
\begin{equation}\label{eqn:Robert-ellellp1version-coproduct}
\eta_R(B)-\eta_L(B)\ind\equiv0\modp{p,A_1},
\end{equation}
Notice that under the reduction map $\Ell_*\Ell_{(p)}\lra\ssGamma^0_*$,
the index function
\[
(L\subset L')\longmapsto[L';L]
\]
is sent to $\ind$. This can be seen as follows. For any supersingular
elliptic curve $\mathcal E$ defined over $\F_{p^2}$ there is an imaginary
quadratic number field $K$ in which $p$ is unramified and so there is
a lift $\alpha$ of $j(\mathcal E)$ contained in the ring of integers
$\mathcal O_K$. Then there is an elliptic curve $\tilde{\mathcal E}$
defined over $\mathcal O_K$ with $j(\tilde{\mathcal E})=\alpha$ and
reduction modulo~$p$ induces an isomorphism
$\tilde{\mathcal E}[n]\lra\mathcal E[n]$ for $p\nmid n$. Since a strict
(hence separable) isogeny $\phi\:\mathcal E\lra\mathcal E'$ of degree $n$,
and defined over an extension of $\F_{p^2}$ is determined by
$\ker\phi\subset\mathcal E[n]$, it can be lifted to a strict isogeny
$\tilde\phi\:\tilde{\mathcal E}\lra\tilde{\mathcal E'}$ of degree $n$ and
defined over some extension of $\mathcal O_K$, where $\ker\tilde\phi$ is
the preimage of $\ker\phi$ under reduction. Then $\ind\phi\equiv n\modp{p}$.
Hence if we express $\tilde{\mathcal E}$ in the form $\C/L$, $\tilde{\mathcal E'}$
can be realized as $\C/L'$ where $L\subset L'$ has index $n$.

Because $\ind^{p-1}=c_1$ (the constant function taking value 1), we have
\begin{equation}\label{eqn:Robert-ellellp1version-coproduct-p-1}
\eta_R(B^{p-1})-\eta_L(B^{p-1})\equiv0\modp{p,A_1},
\end{equation}
which implies that $B^{p-1}\in\ssEll_*$ is coaction primitive.

By Equation~\eqref{eqn:Robert-ellellp1version}, we have
\begin{align*}
\gamma\mathrm B(m)=\gamma(mB)&=\sum_im_i\otimes t_i\eta_RB\\
&=\sum_im_i\otimes Bt_i\ind\\
&=\sum_im_iB\otimes t_i\ind\\
&=\sum_i\mathrm B(m_i)\otimes t_i\ind\\
&=\mathrm B\gamma^{[1]}m,
\end{align*}
where we have viewed $M_*$ as a right $\ssEll_*$-module and used
$\ssEll_*$-bimodule tensor products.

The determination of $\mathrm T_\ell B$ now follows from our
definition of the Hecke operators of \cite[equation 6.5]{ellellp1},
as does the generalization of Robert's formula
\[
\mathrm T_\ell(BF)\equiv\ell B\mathrm T_\ell(F),
\]
which holds for $F\in\ssEll_*$, and primes $\ell\neq p$.
\end{proof}

Our results are more general than those of Robert since they involve
generalized isogenies rather than just isogenies to define
$\ssEll_*$-linear maps $\ssGamma^0_*\lra\Fpi$ and hence operations
$\bar\phi$ on $\ssGamma^0_*$-comodules. Explicit operations of this
type were defined in \cite{haell} using Hecke operators derived from
the space of double cosets
\[
\<\mu_{p^2-1},p\>\backslash\tSGp/\<\mu_{p^2-1},p\>
\]
and its associated Hecke algebra; in fact this space is homeomorphic
to $\SGp^0\rtimes\Z/2$ as a space. For each supersingular elliptic
curve $(\mathcal E,\omega)$ we can identify $\W(\F_{p^2})\<\rmS\>$
with $\tIsog((\mathcal E,\omega),(\mathcal E,\omega))$ and as in \cite{haell}
obtain for each $\alpha\in\SGp\rtimes\Z/2$ an $\ssEll_*$-linear map
$\alpha_*\:\ssGamma^0_*\lra\ssEll_*$ and hence an operation $\bar\alpha$
on $\ssGamma^0_*$-comodules. This can be further generalized by associating
to each positive integer $d$ and each separable isogeny
$(\mathcal E,\omega)\xrightarrow{\phi}(\mathcal E',\omega')$ of degree $d$,
the corresponding element
$\alpha^\phi\in\tIsog((\mathcal E,\omega),(\mathcal E',\omega'))$ and
then symmetrizing over all of these to form a $\ssEll_*$-linear map
\[
\alpha^d_*\:\ssGamma^0_*\lra\ssEll_*;
\quad
(\alpha^d_*F)(\mathcal E,\omega)=
\frac{1}{d}\sum_{\phi}\alpha^\phi_*.
\]

Robert analyzes the holomorphic part of $\ssEll_{2n}$ as
a $\mathbf H_{p}$-module, in particular he determines when
the \emph{Eisenstein modules} $\Eis_k$ embed, where $\Eis_k$
is the 1-dimensional $\F_p$-module on the generator $e_k$
for which
\[
\mathrm T_\ell e_k=(1+\ell^{k-1})e_k.
\]
Thus $\Eis_k$ is an eigenspace for each Hecke operator
$\mathrm T_\ell$, and there is an isomorphism of
$\mathbf H_{p}$-modules
\[
\Eis_{2k}\iso\F_p\{\tilde E_{2k}\}\subset\ssEll_{4k};
\quad
e_{2k}\longmapsto\tilde E_{2k},
\]
where $\tilde E_{2k}$ is the reduction of one of the
following elements of $(\Ell_{2k})_{(p)}$:
\[
\begin{cases}
\ph{(B_{2k}/4k)}E_{2k}&\text{if $(p-1)\mid2k$},\\
(B_{2k}/4k)E_{2k}&\text{if $(p-1)\nmid2k$}.
\end{cases}
\]
In particular, $\Eis_0$ is the `trivial' module for
which
\[
\mathrm T_\ell e_0=(1+\ell^{-1})e_0.
\]
Robert gives conditions on when there is an occurrence
of $\Eis_k$ in $\ssEll_{2n}$, at least in the holomorphic
part. Since localization with respect to powers of $\Delta$
is equivalent to that with respect to powers of $B$ by
the main result of \cite{modforms} we can equally well
apply his results to $\ssEll_{2n}$, obtaining the
following version of \cite[th\'eor\`eme 3]{Robert}.
\begin{thm}
\label{thm:Robert-Thm3}
For a prime $p\geq5$ and an even integer $k$, there is an
embedding of $\mathbf H_{p}$-modules $\Eis_k\lra\ssEll_{2n}$
if and only if one of the following congruences holds:
\[
n\equiv k\modp{p^2-1},\quad n\equiv pk\modp{p^2-1}.
\]
\end{thm}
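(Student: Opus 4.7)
The plan is to reduce this statement to Robert's th\'eor\`eme~3 in~\cite{Robert}, which is the corresponding result in the holomorphic setting. Two prior inputs do the bulk of the work. First, Theorem~\ref{thm:Robert-Comodule} provides the $\mathbf H_p$-isomorphism $\ssEll_*[a]\iso\ssEll_*[a-1]$ induced by multiplication by $B$, raising degree by $2(p+1)$. Second, the main result of~\cite{modforms} identifies localization at powers of $B$ with localization at powers of $\Delta$, so every element of $\ssEll_{2n}$ may be written as a ratio of a holomorphic form by a power of $B$.

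First I would iterate the $B$-shift $p-1$ times to get an $\mathbf H_p$-isomorphism $\ssEll_{2n}\iso\ssEll_{2(n+p^2-1)}$: the accumulated twist $[-(p-1)]$ is trivial as $\ell^{p-1}\equiv1\modp{p}$ for every prime $\ell\neq p$. Hence the existence of an embedding $\Eis_k\hookrightarrow\ssEll_{2n}$ depends only on the residue class of~$n$ modulo $p^2-1$, which is precisely the form of the conclusion. This reduces the problem to a finite list of residues.

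Next I would use further iterations of the $B$-shift together with the $B/\Delta$ localization equivalence to move each residue class into a weight in which the holomorphic part of $\ssEll_*$ captures the Eisenstein embedding, where Robert's th\'eor\`eme~3 applies directly. Under a shift by $\mathrm B^s$, a $\mathrm T_\ell$-eigenvector with eigenvalue $1+\ell^{k-1}$ becomes a $\mathrm T_\ell^{[-s]}$-eigenvector with the same eigenvalue, equivalently a $\mathrm T_\ell$-eigenvector with eigenvalue $\ell^s+\ell^{s+k-1}$, matching an Eisenstein pattern for a new weight $k'$ determined by $s$ and $k$ modulo $p-1$. Robert's theorem supplies a congruence modulo $p-1$ on the shifted weight, which unwinds to give the first congruence $n\equiv k\modp{p^2-1}$; the second congruence $n\equiv pk\modp{p^2-1}$ is expected to come from the Galois action $j\longmapsto j^p$ on the supersingular locus, which exchanges the two Frobenius-conjugate factors into which $\ssEll_*$ decomposes by~\cite{homell} and contributes the factor of $p$ on the $k$-side.

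The main obstacle I anticipate is the careful bookkeeping of the Hecke twists accumulated by iterated $B$-multiplication: an Eisenstein module twisted by $[a]$ is only isomorphic to another Eisenstein module after matching eigenvalues modulo $p-1$, so one must combine twist parameters with Robert's congruence condition in such a way that the final result lives modulo $p^2-1$ rather than only modulo $p-1$. A secondary concern is ensuring the argument still goes through at the two exceptional $j$-invariants $0$ and $1728$, where the larger automorphism groups (Lemma~\ref{lem:InceFGL}) may impose additional constraints on admissible Eisenstein eigenvalues and need to be checked separately.
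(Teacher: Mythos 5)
Your first paragraph and the opening of the second correctly reconstruct what the paper does: the theorem is obtained by citing Robert's th\'eor\`eme~3 for the holomorphic supersingular ring and then passing to $\ssEll_*$ via the equivalence of $\Delta$- and $B$-localization from~\cite{modforms}, with the $\mathbf H_p$-isomorphisms $\mathrm B\colon\ssEll_*[a]\to\ssEll_*[a-1]$ of Theorem~\ref{thm:Robert-Comodule} ensuring that Eisenstein occurrences are preserved under the localization and that the condition depends only on $n$ modulo $p^2-1$. The paper offers no more detail than this, so up to that point your reduction is exactly the paper's.

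The genuine gap lies in your treatment of the two congruences. You propose that Robert's th\'eor\`eme~3 supplies only a mod-$(p-1)$ congruence on the shifted weight and that the second alternative $n\equiv pk\modp{p^2-1}$ must be produced separately by the Galois action $j\mapsto j^p$ exchanging Frobenius-conjugate factors of $\ssEll_*$. That is not the source the paper relies on: Robert's th\'eor\`eme~3 already gives the full dichotomy $n\equiv k$ or $n\equiv pk$ modulo $p^2-1$ for the holomorphic quotient, with both congruences arising inside his own analysis of the conjugate Eisenstein eigenlines. Moreover, the $B$-shift bookkeeping alone cannot upgrade a mod-$(p-1)$ statement to a mod-$(p^2-1)$ one, since $\mathrm B$ raises weight by $p+1$ and the twist closes up after exactly $p-1$ iterations; this device only establishes the periodicity of the answer modulo $p^2-1$, not the finer residue separation. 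So either you invoke Robert's theorem in its actual strength, in which case the Galois speculation is superfluous, or you must supply the missing eigenline analysis yourself, which you have not. Your final concern about $j\equiv0,1728$ is also a red herring: Robert's result is stated for the whole supersingular ring and already accounts for those $j$-invariants, and $\mathrm B$ is an $\mathbf H_p$-isomorphism uniformly, so no separate check is needed there.
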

Notice that in particular, the trivial module $\Eis_0$ occurs
precisely in degrees $2n$ for which $(p^2-1)\mid2n$.
The $\Ext$ groups of $\ssEll_*$ over $\ssGamma^0_*$ were
investigated in~\cite{ellext,ellcalc}, and the results show
that Robert's conditions are weaker than needed to calculate
$\Ext^0$. Of course, his work ignores the effect of operations
coming from the `connected' part of $\toSepIsogssu$.

\appendix
\section{Supersingular curves defined over $\F_p$}
\label{sec:SSFp}

For every prime $p>3$ with $p\not\equiv1\modp{12}$ there are
supersingular elliptic curves defined over $\F_p$ since the
Hasse invariant then has $Q$ or $R$ as a factor. The following
stronger result which seems to be due to Deuring is also true
and a sketch proof can be found in~\cite{modforms}; Cox~\cite{Cox}
also contains an accessible account of related material.

Let $\mathcal E$ be a supersingular elliptic curve over $\Fpi$
whose $j$-invariant is $j(\mathcal E)\equiv0\modp{p}$ or
$1728\modp{p}$. Recall that the endomorphism ring $\End\mathcal E$
contains an imaginary quadratic number ring of the form
\[
\begin{cases}
\Z[\omega]&\text{if $j(\mathcal E)\equiv0\modp{p}$},\\
\Z[i]&\text{if $j(\mathcal E)\equiv1728\modp{p}$}.
\end{cases}
\]
By Theorem~\ref{thm:Standard WeierstarssForm}, such elliptic curves
are isomorphic to Weierstra\ss{} curves defined over~$\F_p$.

Let $K=\Q(\sqrt{-p})$ and $\mathcal O_K$ be its ring of integers
which is its unique maximal order.
\begin{thm}
\label{thm:ssFp}
For any prime $p>11$, there are supersingular elliptic curves
$\mathcal E$ defined over $\F_p$ with
$j(\mathcal E)\not\equiv0,1728\modp{p}$ and
$\mathcal O_K\subset\End\mathcal E$.
\end{thm}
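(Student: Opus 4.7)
The plan is to produce $\mathcal E$ as the reduction modulo $p$ of an elliptic curve with complex multiplication by $\mathcal O_K$. Classical CM theory over the Hilbert class field $H_K$ provides elliptic curves $E$ with $\End_{\bar{\Q}}E=\mathcal O_K$, parametrised up to isomorphism by the ideal class group of $\mathcal O_K$. Because $p$ divides the discriminant of $K=\Q(\sqrt{-p})$, the prime $p$ ramifies in $\mathcal O_K$; Deuring's reduction theorem then says that the reduction $\bar E$ at any prime of $H_K$ above $p$ is supersingular, and that reduction of endomorphisms yields an embedding $\mathcal O_K\hookrightarrow\End_{\Fpi}\bar E$.

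Next I would check that $j(\bar E)\in\F_p$, so that by Theorem~\ref{thm:Standard WeierstarssForm} a twist of $\bar E$ descends to a Weierstra\ss{} cubic $\mathcal E$ defined over $\F_p$ still carrying the full $\mathcal O_K$-action. The element $\sqrt{-p}\in\mathcal O_K$ reduces to an endomorphism $\pi$ of $\bar E$ satisfying $\pi^2=-[p]_{\bar E}$ in $\End\bar E$, so $\pi$ is a purely inseparable isogeny of degree~$p$. By the Frobenius--separable factorisation of Proposition~\ref{prop:IsogFactors}, $\pi$ must coincide with the relative Frobenius $\Fr\:\bar E\to\bar E^{(p)}$ up to composition with an automorphism; hence $\bar E^{(p)}\cong\bar E$ and $j(\bar E)^p=j(\bar E)$. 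Since the whole of $\mathcal O_K$ (including the element $(1+\sqrt{-p})/2$ when $p\equiv 3\pmod 4$) survives the reduction and the descent, it sits inside $\End\mathcal E$.

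The final and most delicate step is to ensure $j(\mathcal E)\not\equiv 0,1728\modp{p}$. The construction above produces $h(\mathcal O_K)$ supersingular $j$-invariants in $\F_p$ bearing a distinguished $\mathcal O_K$-action, and Theorem~\ref{thm:ssFp-existence} already guarantees that some supersingular $j$-invariant in $\F_p$ differs from $\{0,1728\}$ whenever $p>11$. The task is to match these two statements: if each of the $h(\mathcal O_K)$ CM reductions coincided with $0$ or $1728$ in $\F_p$, the associated quaternion maximal order $\End\bar E$ would simultaneously accommodate $\mathcal O_K$ together with $\Z[\omega]$ or $\Z[i]$, and an Eichler mass-formula count shows that this double embedding can occur only for primes in a short exceptional list contained in $\{p\leq 11\}$. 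Making this last count explicit---by tabulating singular moduli modulo $p$ for the few primes with $h(-p)\leq 2$ and invoking the Heegner--Stark list of imaginary quadratic fields of small class number---is the main obstacle, and is precisely what is sketched in~\cite{modforms}.
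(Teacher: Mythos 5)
Your CM-reduction framework (steps 1--4) is the right approach and each step is correct: Deuring reduction of an $\mathcal O_K$-CM curve at the ramified prime of $H_K$ over $p$ is supersingular with $\mathcal O_K\hookrightarrow\End$; the element $\sqrt{-p}$ reduces to a degree-$p$ inseparable isogeny, which by Proposition~\ref{prop:IsogFactors} must equal $\Fr$ post-composed with an isomorphism $\bar E^{(p)}\iso\bar E$, hence $j(\bar E)^p=j(\bar E)$; and Theorem~\ref{thm:Standard WeierstarssForm} supplies a Weierstra\ss{} model over $\F_p$. The paper gives no proof of its own (it only cites~\cite{modforms} and~\cite{Cox}), so there is no internal argument to compare against directly.

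The genuine gap is in step 5, and it is not merely a matter of ``making the count explicit.'' Your claim that an Eichler mass-formula estimate forces the exceptional list into $\{p\leq 11\}$ is false. Since $p\equiv 3\pmod 4$ is exactly the case where $\mathcal O_K\supsetneq\Z[\sqrt{-p}]$ and genus theory forces $h(-p)$ to be odd, the only obstruction is $h(\mathcal O_K)=1$, and the class-number-one primes $\equiv 3\pmod 4$ with $p>11$ are precisely $p=19,43,67,163$. For each of these there is a \emph{unique} $\mathcal O_K$-CM $j$-invariant, namely $-96^3$, $-960^3$, $-5280^3$, $-640320^3$, and one checks directly that in every case it reduces to $1728\pmod p$: for instance $640320\equiv 56\pmod{163}$ gives $-640320^3\equiv-65\equiv 98\equiv 1728\pmod{163}$, and $-96^3\equiv-1\equiv 18\equiv 1728\pmod{19}$. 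Because $\mathcal O_K$ is a maximal quadratic order, any embedding $\mathcal O_K\hookrightarrow\End\mathcal E$ is automatically optimal, so Deuring's lifting theorem shows the only supersingular $j$-invariant carrying an $\mathcal O_K$-action is the one coming from the unique CM class; for these four primes that is exactly $j\equiv 1728$. Thus your argument produces no curve with $j\not\equiv 0,1728$ and $\mathcal O_K\subset\End$ for $p\in\{19,43,67,163\}$, and indeed these computations cast doubt on whether the statement as literally written (with the maximal order $\mathcal O_K$ rather than the Frobenius order $\Z[\sqrt{-p}]$) can even hold at these primes. A correct treatment has to address this class-number-one phenomenon explicitly, either by working with the non-maximal order $\Z[\sqrt{-p}]$ (for which the count of supersingular $j$-invariants over $\F_p$ is strictly larger, e.g.\ via the Hurwitz class number $H(4p)$) or by reinterpreting the endomorphism condition.
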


\end{document}